\newcommand{\sd}[1]{\frac{d}{d #1}}
\newcommand{\pd}[2]{\frac{\partial#1}{\partial #2}}
\newcommand{\ip}[2]{\left\langle{#1},{#2} \right\rangle}
 \newcommand{\norm}[1]{\left\lvert{#1}\right\rvert}
    \newtheorem{theorem}    {Theorem}     
    \newtheorem{conjecture} {Conjecture}
    \newtheorem{lemma}   [theorem]       {Lemma}
    \newtheorem{corollary} [theorem] {Corollary}
    \newtheorem*{claim*}{Claim}
\newcommand{\starref}[1]{\textasteriskcentered}
    \theoremstyle{definition}
    \newtheorem{definition}  [theorem] {Definition}
    \newtheorem{assumption}  [theorem] {Assumption}
    \theoremstyle{definition}
    \newtheorem{remark}   [theorem]       {Remark}
    \newtheorem*{starenv*}{\textasteriskcentered}
\newcounter{starlabel}
\newcommand{\starlabel}[1]{%
  \refstepcounter{starlabel}%
  \label{#1}%
}
\begin{document}

  \newcommand{\marginpor}[1]{}



\title[Backwards uniqueness with AC singularities]{Backwards uniqueness for Mean curvature flow with asymptotically conical singularities}
\author{J.\,M.\, Daniels Holgate}
\author{Or Hershkovits}


\dedicatory{Dedicated to the memory of Professor Shmuel Agmon}

\maketitle

\begin{abstract}
In this paper we demonstrate  that if two mean curvature flows of compact hypersurfaces $M^1_t$ and $M^2_t$ encounter only isolated, multiplicity one, asymptotically conical singularities at the first singular time $T$, and if $M^1_T=M^2_T$ then $M^1_t=M^2_t$ for every $t\in [0,T]$.  This is seemingly the first backwards uniqueness result for any  geometric flow with singularities, that  assumes neither {self-shrinking} nor global asymptotically conical {behaviour}. This necessitates the development of new global tools to deal with  both the core of the singularity, its asymptotic structure, and the smooth part of the flows simultaneously.  { As an immediate application, we show that low entropy flows in $\mathbb{R}^4$ are backwards unique}.
\end{abstract}

\maketitle

\section{introduction}

Given a smooth compact hypersurface in $M\subseteq \mathbb{R}^{n+1}$, standard parabolic PDE theory implies that there exists some $t_0>0$ and a smooth family of hypersurfaces $\{M_t\}_{t\in [0,t_0)}$  such that $M_0=M$ and for every time $t\in [0,t_0)$, the normal motion of the position vector is given by the mean curvature vector of $M_t$:
\begin{equation}
\left(\frac{dx}{dt}\right)^{\perp}=\vec{H}.
\end{equation}
Such a family is called a mean curvature flow (MCF) from $M$. {Standard theory further} implies that the smooth evolution $\{M_t\}_{t\in [0,T_0)}$ is unique, and one can refer to \textit{the} MCF.  The standard maximal time of existence criteria manifests in this context as the existence of a \textit{finite} time $T_{0}<\infty$ at which the second fundamental form of $M_t$ explodes. 

\bigskip

Whilst classical uniqueness of solutions to non-linear parabolic PDEs is a central pillar of the theory, there is another, substantially more subtle (although admittedly less central) notion of uniqueness - the notion of backwards uniqueness.   In the context of MCF, it was established in \cite{BU} that if $M^1_t$ and $M^2_t$ are two \textit{smooth} mean curvature flows on the interval $[0,T]$ satisfying $M^1_{T}=M^2_{T}$ then $M^1_0=M^2_0$. This proof relied on ideas found in \cite{LP}, reverting to an implementation for vector bundles appearing in \cite{KS_unique1}.

\bigskip

Our discussion so far has only considered \textit{smooth} mean curvature flow. However, it has long been established that mean curvature flow does not necessarily vanish completely at its maximal time of smooth existence $T_0$, as demonstrated by the dumbbell surface. The study of the formation of singularities in MCF, as well as the regularity and structure theory of weak solution extending the flow past time $T_0$  has been the central theme of research in MCF for the last three decades.

\bigskip

The existence question {for} various notions of flow past singularities was addressed  in \cite{ES,CGG, Ilm_elip, Brakke}. The weak mean curvature flow is {non-}vanishing for a maximal time $T_{\textrm{ext}}<\infty$ called the \textbf{extinction time}. Typically{,} $T_{\mathrm{ext}}$ is strictly larger than the maximal time of smooth evolution $T_0$, which is called the \textbf{first singular time}. In the very same papers where existence was discussed, the appropriate notion of non-uniqueness was pin pointed. While it was clear from the beginning that weak mean curvature flow starting from a \textit{singular} initial condition might evolve {non-}uniquely (i.e fatten), it took another decade until an example of a {non-}unique evolution past singularities of an initially compact smooth hypersurface  was constructed \cite{White_ICM, Ilmanen_White} (a more recent construction of non-uniqueness follows from  \cite{Ketover,LeeAtAll,CHS}). Even when the flow is {non-}unique, there are still two distinguished weak mean curvature flows - the \textit{outer most flow} and the \textit{inner most flow}, and uniqueness is best characterized by both of them being equal to the maximal weak flow, which is called the level set flow \cite{HW}.   

\bigskip

The purpose of this article is to initiate the study of backwards uniqueness for mean curvature flow past singularities. We propose the following conjecture which motivated the current research:

\begin{conjecture}[Backwards uniqueness conjecture past singularities for MCF in $\mathbb{R}^3$]\label{backwards_uniqueness_conjecture}
Let $M^i \subset \mathbb{R}^3$, $i=1,2$ be two smooth, connected, compact  hypersurfaces in $\mathbb{R}^3$. Denote by $M^i_t$ the outermost mean curvature flows emanating from $M^i$. If there exists a time $t_0$ which is strictly prior to the extinction time of $M^1_t$, such that $M^1_{t_0}=M^2_{t_0}$ then 
\begin{equation}
M^1_t\equiv M^2_t,\qquad \forall t\in [0,t_0]. 
\end{equation} 
\end{conjecture}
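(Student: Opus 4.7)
The plan is to prove the conjecture by reverse induction on the singular times of the outermost flows in $[0,t_0]$, using our main theorem as a template for the most subtle step and extending its machinery to cover the variety of singularities that can appear in $\mathbb{R}^3$. Since the statement is a conjecture, what follows is a program rather than a complete argument.

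First I would reduce to passing through a single singular time. Call $T\in (0,t_0]$ a \emph{matching time} if $M^1_s=M^2_s$ for every $s\in [T,t_0]$. The set of matching times is nonempty (it contains $t_0$) and closed from above by Hausdorff continuity of the outermost flow, so it suffices to show that if $T$ is a matching time then some interval $[T-\delta,T]$ also consists of matching times; iterating yields matching down to $0$. Between consecutive singular times, both flows evolve smoothly, so the classical backward uniqueness theorem of \cite{BU} already propagates equality across any smooth interval up to the next singular time. The real work is therefore concentrated at a singular time $T<t_0$ at which we know $M^1_t=M^2_t$ for $t\in(T,t_0]$ and wish to conclude equality on $[T-\delta,T]$.

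At such a $T$, I would analyse the singularities via the classification of tangent flows in $\mathbb{R}^3$. For outermost flows of smooth compact hypersurfaces in $\mathbb{R}^3$, one expects every tangent flow to be a smooth multiplicity one self-shrinker (this is fully known in many regimes, by work of Brendle, Colding--Minicozzi, Bamler--Kleiner and others), and hence to be either a shrinking sphere, a shrinking cylinder $\SS^1\times\RR$, or an asymptotically conical shrinker. Case (i), the extinction of a spherical component, should be handled by a Carleman-type argument centred at the extinction point, linearising around the shrinking sphere in a self-similar parabolic neighbourhood and gluing onto the smooth part of the flow across an outer annulus. Case (ii), isolated multiplicity one asymptotically conical singularities, is precisely the main theorem of the present paper. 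Case (iii), cylindrical singularities, is the principal new case to resolve.

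The main obstacle is case (iii), where the singular set need not be isolated and can stretch along a curve. I would attempt it by replacing the role of the asymptotic cone in the present work with the asymptotic cylinder, using Colding--Minicozzi's uniqueness of cylindrical tangent flows to parameterise the neck region, and then running a global Carleman estimate that simultaneously controls the singular core along the axis, the cylindrical asymptotic region, and the smooth part of the flow far from that axis; it is here that the \emph{global} viewpoint developed in the present paper should be essential. A secondary obstacle is the absence, in general, of a complete classification of tangent flows in $\mathbb{R}^3$: without genericity one may encounter non-standard shrinkers for which neither an asymptotically conical nor a cylindrical model applies, and each would require its own backward uniqueness argument. Once these difficulties are surmounted, the reverse induction set up in the first step closes the loop from $t_0$ down to $0$.
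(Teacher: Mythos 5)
You have correctly identified that this statement is a \emph{conjecture}: the paper offers no proof of it, and your submission is, as you say yourself, a program rather than an argument. In fact your program essentially reproduces the authors' own announced roadmap (the AC case is Theorem \ref{main_thm} of this paper, the compact case is attributed to \cite{DHH}, and the cylindrical case is deferred to ``a subsequent work of the authors''), so as a statement of strategy it is reasonable. But it cannot be assessed as a proof, and the gaps are not merely the one you flag.

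Concretely: (1) Your classification of singularities in $\mathbb{R}^3$ omits the paper's alternative IV, non-cylindrical shrinkers with cylindrical ends; excluding these is Ilmanen's ``no cylinder'' conjecture, which is open, so even the case analysis underlying your induction is conditional. (2) Your reduction step treats the flows between singular times as smooth and invokes \cite{BU}, but past the first singular time the outermost flow is only a weak (Brakke/level-set) flow; its time slices need not be smooth hypersurfaces, the singular set at a cylindrical time can be a curve rather than a point, and the entire graphical-gauge machinery of the paper (writing $M^2_t$ as a normal graph over $M^1_t$ and running Carleman estimates) is built for flows that are smooth away from finitely many space-time points. Your appeal to ``Hausdorff continuity of the outermost flow'' to close the set of matching times is also unjustified: outermost flows are only semicontinuous in general, and one must rule out that equality is lost exactly at the infimum of matching times. (3) The cylindrical step itself, which you correctly identify as the main obstacle, is asserted rather than argued; the analogue of the conical Carleman weight for a cylinder, and the treatment of a non-isolated singular set, are precisely the open problems. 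So the proposal is a plausible outline of how one might eventually attack the conjecture, consistent with the paper's framing, but it does not constitute a proof and should not be presented as one.
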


{At present, it appears} the only existing results about backwards uniqueness of \textit{singular} MCF or other singular geometric flows, prior to the ones which appear in this  paper,  require both self-similarity \textit{and} an asymptotic assumption \cites{Wang_unique,Wang_unique_Cyl,KW,KW2, Kot_cone}.\footnote{Full self-similarity can sometime be deduced \textit{a posteriori}. See \cite{Kot_cone}.}  In particular, the existing results do not apply  to flows starting from compact surfaces. 

\bigskip

In order to motivate the above conjecture, we need to discuss MCF singularities. The study of singularity formation for MCF, which is {closely} connected with the regularity theory of the weak {MCF}, was initiated by Huisken \cite{Huisken_mono}, utilizing his  monotonicity formula. {Said formula demonstrates} that singularities of MCF are modelled on \textit{self-shrinking} flows: solutions of the mean curvature flow satisfying of the form $(\sqrt{-t}\Sigma)_{t\in (-\infty,0)}$, where $\Sigma$, {known as a} \textbf{self-shrinker}, satisfies the elliptic equation
\begin{equation}
\vec{H}+\frac{x^{\perp}}{2}=0.
\end{equation}   
Importantly, for mean curvature flow of surfaces in $\mathbb{R}^3$, it follows from \cite{Ilm_surfaces, Bamler_Kleiner,Wang_asym, CS, CM,Schulze_cpt} that if $(x_0,t_0)$ is a singular point at the first singular time, then either
\begin{enumerate}[I.]
\item ({C}ylindrical singularity)  There exists some orthogonal transformation $O$ such that 
\begin{equation*}
\lim_{t\rightarrow t_0}\frac{M_t-x_0}{\sqrt{t_0-t}}= O(S^{1}(\sqrt{2})\times \mathbb{R}),\;\textrm{or}
\end{equation*}
\item\label{AC_sing_alt} (Asymptotically conical singularity) there exists a {self-shrinker} $\Sigma\subseteq \mathbb{R}^{3}$ and a cone $C \subseteq \mathbb{R}^{3}$ such that $C-\{0\}$ is smooth and 
\begin{equation}\label{AC_eq}
\lim_{t\rightarrow t_0}\frac{M_t-x_0}{\sqrt{t_0-t}}=\Sigma, \qquad \lim_{\lambda\rightarrow 0}\Sigma =C \;\;\textrm{in}\; C^{\infty}_{\textrm{loc}}(\mathbb{R}^{n+1}-\{0\}), \; \textrm{{or}}
\end{equation} 
\item (Compact singularity)  There exists a compact self-shrinker $\Sigma\subseteq \mathbb{R}^3$ such that
\begin{equation*}
\lim_{t\rightarrow t_0} \frac{M_t-x_0}{\sqrt{t_0-t}}=\Sigma, \; \textrm{or}
\end{equation*}
\item\label{cyl_end} (Non-cylindrical singularities  with cylindrical ends) There exists a {self-shrinker} $\Sigma$, such that $\Sigma$ has some cylindrical end, but $\Sigma$ is not the cylinder $S^1(\sqrt{2})\times \mathbb{R}$, and a sequence of times $t_j\rightarrow t_0$ such that 
\begin{equation*}
\lim_{j\rightarrow \infty} \frac{M_{t_j}-x_0}{\sqrt{t_0-t_j}}=\Sigma.
\end{equation*}
\end{enumerate}
{The above limits} are to be interpreted in the $C^{\infty}_{\mathrm{loc}}$ sense, i.e. as $t\rightarrow T$ larger and larger portions of the surfaces on the left-hand side can be written as graphs of $C^{\infty}$ function, with smaller and smaller $C^k$ norm, over the corresponding balls in the surfaces in the right-hand side. 

\begin{remark}[On Backwards uniqueness and singularity analysis]
It is a famous conjecture of Ilmanen \cite{Ilmanen_problems} (the ``no cylinder conjecture'') that alternative IV above {cannot} happen. In fact, while the discussion above (and also \cites{White_strat, CM_structure, CM_loj, ADS,BC,CHH,CCMS, CM_generic,Holgate, CHS, BH,Brendle_genus, BH_surgery, HK_surgery, Wang_unique}) is a testament for the deep understanding we currently posses for MCF singularities in $\mathbb{R}^3$, the no {cylinder} conjecture is but one of several open problems regarding singularities of mean curvature flow of surfaces. Some of these problems, including the ``no cylinder'' conjecture, are {intimately} related to backwards uniqueness question, \cite{Wang_unique_Cyl}. Interestingly, in other parabolic settings, such as the three dimensional Navier--Stokes equation, backwards uniqueness forms a key tool in the study of singularity formation. Indeed, this was the main motivation for \cite{ESS}, on which \cite{Wang_unique} relies. This provides another motivation, in addition to the fundamental one, to the study of Conjecture \ref{backwards_uniqueness_conjecture}.            
\end{remark}
 
\bigskip 

As our main result holds in any dimension, let us make {the }definition of an asymptotically conical (AC) singularity concrete:

\begin{definition}[Asymptotically conical singularity]\label{AC_forwards}
We say that a MCF $(M_t)_{t\in (0,t_0)}$ in $\mathbb{R}^{n+1}$ has an asymptotically conical singularity at $(x_0,t_0)$ if there exists a smooth {self-shrinker} $\Sigma\subseteq \mathbb{R}^{n+1}$ and a cone $C \subseteq \mathbb{R}^{n+1}$ such that \eqref{AC_eq} holds.
\end{definition}

\begin{remark}\label{con_neigh}
If $(M_t)_{t\in [0,t_0)}$ has an asymptotically conical singularity at $(x_0,t_0)$ then by \cite{White_strat} there exists a ball $B(x_0,\varepsilon)$ and a smooth hypersurface $N\subseteq B(x_0,\varepsilon)-\{x_0\}$ such that $M_t\rightarrow N$ in $B(x_0,\varepsilon)-\{x_0\}$ as $t\rightarrow t_0$. In particular, if $(x_0,t_0)$ is a conical singularity, and it's the unique singular point at time $t_0$, then we can extend $M_t$ to a limit hypersurface $M_{t_0}$, which is smooth on $\mathbb{R}^{n+1}-\{x_0\}$. See \cite{CS} for a detailed explanation.
\end{remark}

We can now state the main theorem of this paper.

\begin{theorem}[Backward uniqueness at isolated asymptotically conical singularities] \label{main_thm}
Let $(M^i_t)_{t\in [0,t_0]}$ for $i=1,2$ be two mean curvature flows in $\mathbb{R}^{n+1}$, smooth in $\mathbb{R}^{n+1}\times [0,t_0] - \{(0,t_0)]\}$, having an AC singularity at $(0,t_0)$.  If $M^1_{t_0}=M^2_{t_0}$ then $M^1_t=M^2_t$ for every $t\in [0,t_0)$.
\end{theorem}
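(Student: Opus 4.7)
The plan is to combine the classical smooth backwards uniqueness of \cite{BU} with a new Carleman-type estimate tailored to asymptotically conical singularities, in the spirit of \cites{Wang_unique,Kot_cone,KW}, but using a global weight that interpolates between the parabolically rescaled picture near the singularity and the fixed-coordinate picture away from it. The main work is the construction of this weight; once such an estimate is in hand, the argument is standard.

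\textbf{Step 1 (Reduction).} By Remark~\ref{con_neigh}, each flow extends to a smooth hypersurface $M^i_{t_0}$ on $\mathbb{R}^{n+1}\setminus\{0\}$, and the hypothesis $M^1_{t_0}=M^2_{t_0}$ forces both flows to be asymptotic, near the origin, to a \emph{common} cone $C$ (by sending $r\to 0$ in the smooth limit). A priori, the associated AC shrinkers $\Sigma^1,\Sigma^2$ may still be different shrinkers asymptotic to $C$. It suffices to show $M^1_t=M^2_t$ on an interval $(t_0-\eta,t_0]$, since then the smooth backwards uniqueness of \cite{BU} applied on $[0,t_0-\eta]$ finishes the proof.

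\textbf{Step 2 (Parametrization).} On the regular region, at fixed distance from the origin, $M^i_t\to M^i_{t_0}$ smoothly, so $M^2_t$ can be written as a normal graph of a small function $u(\cdot,t)$ over $M^1_t$ with $u(\cdot,t_0)\equiv 0$. Near the origin work in rescaled coordinates $y=x/\sqrt{t_0-t}$, $\tau=-\log(t_0-t)$: each rescaled flow $\tilde{M}^i_\tau$ converges, on compact sets, to its shrinker $\Sigma^i$, and both $\Sigma^i$ are close to $C$ at large rescaled scale. Hence on a large rescaled annulus both $\tilde{M}^i_\tau$ can be written as normal graphs $\tilde v^i$ over $C$, which can be matched with the outer parametrization on the overlap to define a single difference function (still called $u$) on a spacetime tubular neighborhood of the flow. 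This function satisfies a linear parabolic inequality
\[
|(\partial_t - \Delta_{M^1_t})u| \le C(|u| + |\nabla u|).
\]

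\textbf{Step 3 (Hybrid Carleman estimate).} The crux is the construction of a weight $\phi(x,t)$ on spacetime, and the proof of an estimate
\[
\lambda\int_{t_0-\eta}^{t_0}\!\int u^2\,e^{\lambda\phi}\,dV\,dt \;\le\; C\int_{t_0-\eta}^{t_0}\!\int |(\partial_t-\Delta)u|^2\,e^{\lambda\phi}\,dV\,dt,
\]
valid for all sufficiently large $\lambda$. The weight $\phi$ must (i) in the outer region $|x|\gtrsim\sqrt{t_0-t}$ act as a standard parabolic Carleman weight in fixed coordinates, (ii) in the inner region $|x|\lesssim\sqrt{t_0-t}$ transform, after parabolic rescaling, into the Gaussian weight $e^{-|y|^2/4}$ natural to the drift Laplacian governing rescaled MCF, and (iii) transition smoothly between the two regimes so that the commutator errors generated at the matching scale can be absorbed into the dominant left-hand term. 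Combined with the parabolic inequality of Step 2, this Carleman estimate together with the vanishing of $u$ at $t=t_0$ (and its decay rate) forces $u\equiv 0$ on $(t_0-\eta,t_0]$, yielding $M^1_t=M^2_t$ there and completing the proof via Step 1.

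The main obstacle is the construction of the weight $\phi$ in Step 3. The difficulty is twofold: one must simultaneously respect the self-similar structure of the singularity and the non-self-similar structure of the regular part, with the transition occurring on the parabolic scale $\sqrt{t_0-t}$, which itself degenerates as $t\to t_0$. Compounding this, one cannot use a common shrinker as a reference since we do not know $\Sigma^1=\Sigma^2$; the only available common reference near the singularity is the cone $C$, and the whole hybrid estimate must be organized around it. This is precisely the ``global tool dealing with the core, the asymptotic structure, and the smooth part simultaneously'' advertised in the abstract.
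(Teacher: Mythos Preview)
Your outline captures the spirit of a Carleman approach but has two genuine gaps, and one unnecessary complication.

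\textbf{The shrinkers coincide.} You write that ``a priori $\Sigma^1,\Sigma^2$ may still be different shrinkers asymptotic to $C$'' and organize Step~2 around the cone as the only common reference. In fact $\Sigma^1=\Sigma^2$ follows immediately from Wang's uniqueness theorem \cite{Wang_unique}: since $M^1_0=M^2_0$, the tangent cones at the origin coincide, and Wang shows two AC shrinkers with the same asymptotic cone are equal. The paper uses this (Lemma~\ref{same_shrink}) to write $M^2_t$ globally as a normal graph over $M^1_t$ (Theorem~\ref{graph_over_one_another}), avoiding your cone-based patching entirely.

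\textbf{The decay rate is not free.} In Step~3 you write ``together with the vanishing of $u$ at $t=t_0$ (and its decay rate) forces $u\equiv 0$.'' But any Carleman estimate with a weight singular at $t_0$ requires, as a hypothesis, that $u$ decays at a specific exponential rate $|u|\lesssim \exp(-(t_0-t)^{-\beta})$; merely knowing $u(\cdot,t_0)=0$ is far from enough. Establishing such a rate is not a technicality---it is Theorem~\ref{thm_rates}, which occupies all of Section~4 and requires three independent ingredients: the Escauriaza--Fern\'andez Carleman estimate at smooth points, a localized version of Wang's conical Carleman estimate in the intermediate annulus, and an observability/stability inequality from inverse problems (Isakov) to push the decay into the core $B(0,R\sqrt{t_0-t})$. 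Your proposal contains no mechanism for any of this.

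\textbf{A single hybrid weight does not give the right $\beta$.} Even granting a hybrid Carleman of the form you describe, the paper explains (Section~1.1, item (C)) that a direct Lees--Protter type argument produces conditional uniqueness only under the decay $\exp(-(t_0-t)^{-\beta})$ for some $\beta=\beta(\Sigma)$ that may be large, while any realistic decay argument can only deliver some small $\beta\in(0,1)$. The paper's resolution (Theorem~\ref{smooth_global_Carleman}) is not a single interpolating weight but the \emph{sum} of two separate Carleman estimates---a global one with uncontrolled negative terms supported in $I^r_t$, and a Gaussian-weighted one on the rescaled flow supported in $I^{R+1}_t$ with $R\gg r$---chosen so that the Gaussian decay of the second kills the matching errors. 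This is what allows arbitrary $\beta>0$ and is the main novelty; your Step~3 does not contain this idea.
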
   

\begin{remark}\label{several_sings}
For the sake of brevity, the theorem is stated with a single AC singularity at time $t_0$. The theorem and its proof hold for any number (necessarily  finite, by remark \ref{con_neigh}) of conical singularities.  
\end{remark}

\begin{remark}
When $n=2$, { or when and $n=3$ and the Gaussian entropies (see \cite{CM_generic}) of the $M^i_t$ are smaller than the entropy of $\mathbb{S}^2\times \mathbb{R}$}, if $M^1_t$ has an AC singularity at $(x_0,t_0)$ and $M^2_{t_0}=M^1_{t_0}$ then $M^2_t$ has an AC singularity at $(x_0,t_0)$ as well. See Lemma \ref{AC_from_other} in the preliminary section.
\end{remark}

Theorem \ref{main_thm} substantiates Conjecture \ref{backwards_uniqueness_conjecture}, in that it {attests} that one out of the  four (conjecturally three) types of singularities for MCF for surfaces is not the cause of backwards non-uniqueness. The case of cylindrical singularities will be addressed in a subsequent work of the authors. The appropriate notion for backwards uniqueness at compact singularities was addressed in \cite{DHH}, which will be discussed further below. { To further illustrate the relation of Theorem \ref{main_thm} to Conjecture \ref{backwards_uniqueness_conjecture}, we include the following easy analogue of the conjecture.

\begin{corollary}[Backwards uniqueness of low entropy flows in $\mathbb{R}^4$]
\label{backwards_uniqueness_cor}
Let $M^i \subset \mathbb{R}^4$, $i=1,2$ be two smooth, connected, compact  hypersurfaces in $\mathbb{R}^4$ with Gaussian entropy lower than the entropy of the cylinder $\mathbb{S}^2\times \mathbb{R}$. Denote by $M^i_t$ the outermost mean curvature flows emanating from $M^i$. If there exists a time $t_0$ which is strictly prior to the extinction time of $M^1_t$, such that $M^1_{t_0}=M^2_{t_0}$ then 
\begin{equation}
M^1_t\equiv M^2_t,\qquad \forall t\in [0,t_0]. 
\end{equation} 
\end{corollary} 
}

\begin{remark}[Contrasting uniqueness with backwards uniqueness]\label{contrast} Theorem \ref{main_thm} is somewhat  surprising when comparing uniqueness and backwards uniqueness: The backwards flow of any parabolic equation is less stable  than the forward flow, rendering backwards uniqueness a more subtle issue than forward uniqueness. Nevertheless AC singularities do cause non-uniqueness \cite{Ketover,LeeAtAll,CHS,Ilmanen_White}, while Theorem \ref{main_thm} postulates that they do not hinder backwards uniqueness. In fact, the picture emerging from \cites{HW, CHS, CHH,Bamler_Kleiner} and the aforementioned ``no cylinder'' conjecture is that AC singularities are the source, and the only source, of forward {non-}uniqueness. This further corroborates the validity of Conjecture \ref{backwards_uniqueness_conjecture}  
\end{remark}

\bigskip

\subsection{{Comparison} to previous works} Before sketching the proof of Theorem \ref{main_thm}, let us explain what new difficulties it poses by comparing it to existing backwards uniqueness results. This will also serve to direct us towards the proof.

\bigskip

The most elementary method for proving backwards uniqueness for {non-linear} heat type equation{s} is {via} ``frequency function'' argument{s}. This approach, {initiated} by Agmon and Nirenberg in \cites{AN1,AN2}, was successfully used for backwards uniqueness of smooth MCF (and other geometric evolution equations) in \cite{KS_simple}. The regularity of the flow was crucially used in showing that the terms in the analysis resulting {from} the change of the heat type operator {over time}, as well as the {non-linearities}, could be dominated by the main {``Rayleigh quotient''} plus a constant. This requires pointwise (in space) bounds for these terms.  More recently, using a frequency function analysis for the height of one flow over another (in Gaussian space), we were able to show in \cite{DHH} that two mean curvature flows $M^1_t,M^2_t$ which experience a multiplicity one compact singularity (as in alternative III above) at $(x_0,t_0)$ and satisfy 
\[
\lim_{t\rightarrow t_0}\frac{d_{H}(M^1_t,M^2_t)}{(t_0-t)^k}=0 \qquad \forall k=1,2,\ldots 
\]  
must satisfy $M^1_t=M^2_t$ for every $t\in [0,t_0)$ (see also \cites{MartinHagemayerSesum,Ks}). To control the errors and change in operator, we needed a {$L^1$} in time, but \textbf{global} $C^2$ in space, bound for the shrinker quantity (in rescaled space time). Such bounds were available since in the case of \cite{DHH}, when $t$ is close to $t_0$, the entirety of  $M^1_t$ is modelled on a self-shrinker.  Evidently, such bounds are not available in the context of Theorem \ref{main_thm}.

{Considering} whether such difficulties could be {overcome}, we remark that frequency function argument do not seem suitable for combining different regimes, and that even if such combination were possible, there is an intermediate conical regime between the shrinking regime and the smooth regime, where neither smooth analysis nor blown up shrinking analysis is relevant.

\begin{remark}[Best case scenario for a frequency analysis] Let us mention that even if such ``combined frequency analysis'' were possible in this setting, the discussion above indicated that it would only give half of the picture, in the form of a \textit{conditional uniqueness} result: that if $M^1_t$ and $M^2_t$ are sufficiently close as $t\rightarrow t_0$ then $M^1_t=M^2_t$. {Establishing that this closeness is achieved would require another} argument.  
\end{remark}

\bigskip

The second standard method for proving backwards uniqueness is via {Carleman estimates}, which are {a} form of weighted energy estimates with {convex} kernels. As was already mentioned, following \cite{LP}, Kotschwar \cite{KS_unique1} had used this approach to derive backwards uniqueness for smooth Ricci flows. This was later adapted to the MCF in \cite{BU}. While ideas from \cite{LP} do play a key role in our argument, there are several key difficulties, in increasing order of importance.
\begin{enumerate}[(A)]
\item The polynomial kernel of \cite{LP} is not a {good} fit for dealing with the singularity: The terms coming from the singularity are of higher order than the good terms from the kernel.
\item The singularity does not allow for the ``shifted polynomial weight'' trick in \cite{LP}. Thus, any argument based on \cite{LP} can only be expected to provide ``conditional uniqueness'', which should be augmented by an appropriate ``rate of decay'', {coming} from a {separate} argument.
\item While it is possible to obtain {a} conditional backwards uniqueness result by following the energy method of \cite{LP} {and} choosing an appropriate weight, the degree of closeness it requires is far too stringent for it to be useful: {adapting the} \cite{LP} argument {accordingly yields the existence of} some $\beta<\infty$, depending on the model $\Sigma$ of the AC singularity, such that if $M^1_t$ and $M^2_t$ are closer than $\exp\left(-(t_0-t)^{-\beta}\right)$, they must coincide. There is no reason for $\beta=\beta(\Sigma)$ to be small, and obtaining the required decay rate via some other argument,  with any specific $\beta<\infty$ (indeed, with any  $\beta>1$) seems unrealistic.
 \end{enumerate}  

\bigskip

Using much more involved Carleman estimates, following \cite{ESS}, a seminal work of Wang \cite{Wang_unique} shows that two {self-shrinker}s which have the same asymptotically conical end must themselves coincide. While ideas from Wang do play a key role here, we should again mention two important differences in increasing level of importance.
\begin{enumerate}[(a)]
\item {I}n \cite{Wang_unique} one assumes a global {self-shrinking} behaviour, as well as a conical behaviour of the end. We have no such global information, and there is no reason to assume even an approximate {self-shrinking} behaviour extends up to the conical scale.  
\item More importantly, the argument in \cite{Wang_unique} uses {Carleman} estimates along the conical end, without addressing what happens at the core of the shrinker. The Carleman methods of \cite{ESS} {used in} \cite{Wang_unique} are linear estimates, which eventually lead to the conclusion that {a} (non-rapidly growing) solution to the heat equation (with {controlled} errors) on $\mathbb{R}^n-B(0,1)$, {that} is zero at the final time, had been zero all along, without specifying {Dirichlet} data on $\partial B(0,1)$. This is simply not true on a compact space minus a ball, as the heat equation on a domain with smooth boundary is  {\textit{boundary null-controllable}}: given any initial data, one can prescribe a Dirichlet data such that solution becomes zero in finite time \cite{Coron_book}.

Thus, if we manage to adapt \cite{Wang_unique}, the best it could give is a rate of decay of $M^2_t$ to $M^1_t$, away from the balls $B(0,R\sqrt{t_0-t})$ for $R$ sufficiently large. This should be complemented by a rate on decay inside these balls, as well as compatible conditional uniqueness result, both requiring a different argument.
   
\end{enumerate}  

\subsection{Proof outline}

As is {indicated} by the discussion above, our proof of Theorem \ref{main_thm} consists of two {independent} steps.  { The first is our }``conditional uniqueness'' result, {that asserts a rate at which if two flows converge faster than, then they} must in fact coincide, {and a second that shows this rate is always attained}. As {our} paper deals with backwards uniqueness, it is both more {convenient} and customary {to the backwards-{uniqueness} literature} to consider backwards mean curvature flow (BMCF). {Henceforth,} we will consider hypersurfaces $(M_t)_{t\in [0,T]}$, smooth except when $t=0$ and $x=0$, evolving by
\begin{equation}
\left(\frac{dx}{dt}\right)^{\perp}=-\vec{H},
\end{equation}
such that the MCF $t\mapsto M_{T-t}$ has an isolated conical singularity at $(0,T)$. 

\bigskip 

Our first theorem states
\begin{theorem}[{Rate of convergence}]\label{thm_rates}
Let $(M^i_t)_{t\in [0,T]}$, $i=1,2$  be two BMCFs in $\mathbb{R}^{n+1}$, both smooth outside of $(0,0)$ and with AC singularity at $(0,0)$. We moreover assume that $M^2_0=M^1_0$. Then there exits some $\beta\in (0,1)$ such that
\begin{equation}\label{prox_conc}
d_H(M^1_t,M^2_t)\exp(t^{-\beta})\rightarrow 0, \qquad \textrm{as}\; t\rightarrow 0,
\end{equation}
\end{theorem}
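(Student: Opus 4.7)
My plan is to split the problem by scale. Away from the shrinker scale $\sqrt t$ at the origin, pseudolocality combined with smooth backwards uniqueness should force the two flows to coincide exactly; inside, a Carleman-type weighted energy estimate should quantify how fast the remaining difference must decay.

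First I would apply pseudolocality of MCF to conclude that for each $r_0>0$ both $M^i_t$ have uniformly bounded geometry on $\{|x|\ge r_0\}$ for $t\in [0,\delta(r_0)]$ with $\delta(r_0)\sim r_0^2$, and extend smoothly to the common limit $N$ as $t\to 0$. The smooth backwards uniqueness theorem for MCF \cite{BU} then gives $M^1_t=M^2_t$ on $\{|x|\ge r_0\}$ for $t\le \delta(r_0)$; setting $r_0=c\sqrt t$ yields exact agreement outside $B(0,c\sqrt t)$ for all small $t$, so that $d_H(M^1_t,M^2_t)$ is attained inside $B(0,c\sqrt t)$. By Wang's uniqueness for AC self-shrinkers \cite{Wang_unique}, the two tangent shrinkers at $(0,0)$ coincide with a common $\Sigma$; after rescaling $\tilde M^i_\tau=M^i_t/\sqrt t$, $\tau=-\log t$, both $\tilde M^i_\tau$ become normal graphs over $\Sigma\cap B_c$ with graph functions $u^i\to 0$ in $C^k$. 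The difference $w=u^1-u^2$ then satisfies a linear parabolic equation $\partial_\tau w=Lw+E(\tau)w$, with $L=\Delta_\Sigma-\tfrac12\tilde x\cdot\nabla+|A_\Sigma|^2+\tfrac12$ the shrinker stability operator and $E$ a perturbation decaying in $\tau$, together with homogeneous Dirichlet data $w|_{\partial B_c\cap \Sigma}=0$ from the outer-agreement step.

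The crux of the argument is a Carleman-type weighted energy estimate for $w$ with a space-time weight of the form $e^{\lambda \varphi(\tilde x)/t^\beta}$ for $\lambda$ large and $\beta\in (0,1)$. The Dirichlet data eliminates boundary terms, and a carefully chosen commutator identity should let the time-derivative of the weight be absorbed into the diffusion part of $L$, yielding
\begin{equation*}
\int |w|^2\, e^{2\lambda/t^\beta}\,dx\,dt \le C,
\end{equation*}
from which $d_H(M^1_t,M^2_t)\lesssim e^{-\lambda/t^\beta}$ follows via standard parabolic regularity. The main obstacle is constructing a weight adapted simultaneously to (a) the polynomial growth of $L$'s drift at large $|\tilde x|$, (b) the shrinking inner domain $B(0,c\sqrt t)$ and its interaction with the boundary data, and (c) the singular behavior at $t=0$; the restriction $\beta<1$ should emerge as the subcritical threshold at which the diffusion still dominates the time-derivative of the weight. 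This estimate is genuinely different in flavor from—and complementary to—the Carleman argument powering the conditional uniqueness step of Theorem \ref{main_thm}, which takes closeness at exactly this rate and promotes it to identity.
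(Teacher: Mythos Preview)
Your Step 1 contains a genuine gap that cannot be repaired. The smooth backwards uniqueness theorem of \cite{BU} applies to \emph{closed} flows: if two compact MCFs agree at a final time, they agree at all earlier times. It does \emph{not} localize to a region of the form $\{|x|\ge r_0\}$. On a domain with boundary, backwards uniqueness for parabolic equations is simply false: the backward heat equation on $\Omega\times[0,T]$ is boundary null-controllable, meaning one can prescribe Dirichlet data on $\partial\Omega$ so that an arbitrary nonzero solution is driven to zero at time $T$. The paper makes exactly this point in its discussion of prior work (see the paragraph on \cite{Wang_unique} and the reference to \cite{Coron_book}). So you cannot conclude $M^1_t=M^2_t$ on $\{|x|\ge c\sqrt t\}$, and therefore you do not obtain homogeneous Dirichlet data for $w$ on $\partial B_c$ in the rescaled picture. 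The rest of your outline rests on this boundary condition and collapses with it.

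The paper's proof proceeds quite differently: rather than exact agreement in the outer region, it establishes \emph{quantitative decay} of the height function $u$ in three successively smaller regions. Outside a fixed ball it uses the Carleman estimate of Escauriaza--Fern\'andez \cite{EF} to get $|u|+|\nabla u|\le e^{-\delta/t}$; after parabolic rescaling this yields $\exp(-\delta|x|^2/t)$ decay outside $B(0,R\sqrt t)$. In the intermediate ``roughly conical'' annulus $B(0,r)\setminus B(0,R\sqrt t)$ it adapts Wang's Carleman estimate \cite{Wang_unique} (dropping the self-shrinking hypothesis in favor of rough conicality) and optimizes in the Carleman parameter to obtain $e^{-1/t^\beta}$ decay for some small $\beta>0$. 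Finally, inside the core $B(0,R\sqrt t)$ it does not use a Carleman estimate at all, but rather a stability inequality from inverse problems (Isakov \cite{Isakov}, Theorem 3.3.10) which propagates smallness of Cauchy data on a portion of the lateral boundary into the interior, without any assumption at the initial or final time. None of these three steps asserts exact agreement anywhere; each one turns decay at one scale into decay at the next.
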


This is complemented by the following theorem 

\begin{theorem}[{Conditional Uniqueness}]\label{thm_cond_back}
Let $(M^i_t)_{t\in [0,T]}$ $i=1,2$ be two BMCFs on $\mathbb{R}^{n+1}$, both smooth outside $(0,0)$ with AC singularity at $(0,0)$. If there exists some $\beta>0$  for which 
\begin{equation}\label{prox_ass}
d_H(M^1_t,M^2_t)\exp(t^{-\beta})\rightarrow 0, \qquad \textrm{as}\; t\rightarrow 0,
\end{equation}
then $M^1_t=M^2_t$ for every $t$.
\end{theorem}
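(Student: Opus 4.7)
The plan is to represent $M^2_t$ as a normal graph over $M^1_t$ by a function $u$, promote the Hausdorff decay in \eqref{prox_ass} to higher-order pointwise decay via parabolic regularity, and then apply Carleman-type energy estimates (in rescaled coordinates near the singularity, and in classical form away from it) to force $u \equiv 0$. Concretely, since both flows are smooth on $\mathbb{R}^{n+1}\times (0,T]$ and close in Hausdorff distance, for $t$ in some interval $(0,t_1]$ one writes $M^2_t = \{x + u(x,t)\nu_{M^1_t}(x): x\in M^1_t\}$. The function $u$ then satisfies a quasilinear parabolic equation $\partial_t u - \Delta_{M^1_t}u = Q(u,\nabla u,\nabla^2 u)$ with $|Q| \leq C(|u|+|\nabla u|+|\nabla^2 u|)^2$; interpolating \eqref{prox_ass} against standard interior estimates (using the uniform smoothness of $M^1_t$ on compact subsets of $(\mathbb{R}^{n+1}\setminus\{0\})\times (0,T]$) upgrades the $C^0$ decay to $C^k$ decay of essentially the same order.

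Next I would pass to parabolically rescaled coordinates $y = x/\sqrt{t}$, $\tau = -\log t$. The rescaled flow $\tilde M^1_\tau = t^{-1/2} M^1_t$ converges in $C^\infty_{\mathrm{loc}}$ to the self-shrinker $\Sigma$, and the rescaled graph function $\tilde u$ satisfies a heat-type equation whose principal operator is, up to small errors, the drift Laplacian $L = \Delta - \tfrac{1}{2}\langle y,\nabla\,\cdot\,\rangle$ on $\Sigma$. Outside a fixed Euclidean ball, $M^1_t$ is uniformly smooth and $u$ satisfies an unweighted linear parabolic equation to which an \cite{LP}-type polynomial-weight Carleman estimate directly applies.

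The heart of the argument would be a Carleman estimate in the singular/rescaled region, obtained by adapting the weights of Wang \cite{Wang_unique} (which handle the asymptotically conical end of $\Sigma$) and combining them with a weight tailored to the drift Laplacian on the compact core of $\Sigma$. The target estimate has the schematic form
\[\iint (|\tilde u|^2+|\nabla\tilde u|^2)e^{2\phi}\,dV\,d\tau \leq C\iint (|Q|^2+|\text{cutoff terms}|^2)e^{2\phi}\,dV\,d\tau,\]
for a weight $\phi(y,\tau)$ whose temporal part grows like $t^{-\beta'}$ with some $\beta'$ matched to the $\beta$ of \eqref{prox_ass}. The hypothesis \eqref{prox_ass} would then supply precisely the smallness needed to kill the boundary contributions at $t=0$, while the quadratic structure of $Q$ allows it to be absorbed on the left once $u$ is small. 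Gluing the singular-region and exterior Carleman estimates via cutoff functions forces $u \equiv 0$ in a neighborhood of $(0,0)$, after which classical smooth backwards uniqueness \cite{BU} propagates the equality to all of $[0,T]$.

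The main obstacle will be designing $\phi$ so that it consistently handles all three regimes—the drift-Laplacian shrinker core, the intermediate conical neck, and the transition to the smooth exterior flow—with a temporal growth fast enough to dominate the errors arising from $\tilde M^1_\tau$ only approximately equaling $\Sigma$, yet slow enough that the decay \eqref{prox_ass} can absorb the resulting boundary terms. Unlike Wang's purely self-shrinking setting, where the global self-shrinker structure allows the weight to be defined uniformly throughout, here $\phi$ must bend smoothly across scales; controlling the commutator and cutoff errors in these bending regions is what will most sharply distinguish this argument from both \cite{Wang_unique} and the classical smooth backwards-uniqueness arguments of \cite{BU,KS_unique1,LP}.
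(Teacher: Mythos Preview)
Your outline captures the broad architecture correctly—graph, upgrade decay by interpolation, Carleman, propagate by smooth backwards uniqueness—but it misplaces a key tool and, more importantly, does not confront the central obstacle the paper is built around.

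First, the misplacement: Wang's spatially-growing weights from \cite{Wang_unique} are \emph{not} used in the conditional uniqueness argument. In the paper they appear only in the proof of the rate-of-convergence theorem (Theorem~\ref{thm_rates}), where one pushes decay inward through the conical annulus. For conditional uniqueness the Carleman weight is purely temporal, $\mathcal{G}_{\alpha,\beta}=\exp(\alpha t^{-\beta})$, with no spatial component at all in the global estimate.

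Second, the gap. You propose designing a single weight $\phi$ that ``bends smoothly across scales'' to cover the shrinker core, the conical neck, and the smooth exterior. The introduction of the paper explains precisely why this does not work for arbitrary $\beta>0$: an \cite{LP}-type argument with any single weight will succeed only when $\beta$ is large, specifically larger than a constant depending on $\sup_\Sigma|A|^2$. Since the companion rate theorem only produces \emph{some} small $\beta\in(0,1)$, such an estimate is useless. What actually happens is that the \cite{LP} computation with the temporal weight $\mathcal{G}_{\alpha,\beta}$ and the \emph{standard} volume form gives the desired inequality plus uncontrolled \emph{negative} terms supported in the core $I^r_t=\{|x|\le r\sqrt t\}$, coming from the metric and volume evolution (Theorem~\ref{smooth_global_Carleman_neg_term}). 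The paper's genuine new idea is to run a \emph{second}, independent Carleman estimate on the cut-off function supported in $I^{R+1}_t$ with $R\gg r$, this time using the \emph{Gaussian} volume $e^{-|x|^2/4t}$ (Theorem~\ref{Core_Car}); in rescaled variables the shrinker quantity $S=H+\tfrac12\langle y,\nu\rangle$ is small on the support, so the bad metric-evolution terms vanish. One then multiplies this Gaussian estimate by a large constant and \emph{adds} it to the first: its positive terms dominate the negative core terms, while the cutoff errors near $|x|=R\sqrt t$ carry the Gaussian factor $e^{-R^2/4}$ and are absorbed back. The result is the clean global estimate (Theorem~\ref{smooth_global_Carleman}) valid for every $\beta>0$.

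So the structure is not ``one weight bending across three regimes'' but ``two separate Carleman inequalities, one with Euclidean volume and one with Gaussian volume, combined algebraically.'' Your proposal does gesture at a drift-Laplacian estimate on the core, which is the right instinct, but without isolating the small-$\beta$ obstruction and the two-estimate resolution, the sketch would stall at exactly the point the paper identifies as the main difficulty.
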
 
{
For the sake of completeness, we record that combining Theorem \ref{thm_cond_back} and Theorem \ref{thm_rates}  immediately yields a proof of our main theorem.
\begin{proof}[Proof of Theorem \ref{main_thm} (Backward uniqueness at isolated AC singularities)]
Abusing notations by letting $M^i_t$ be the BMCF corresponding to the MCF $M^i_t$ in the statement (i.e the new $M^i_t$ is given by $M^i_{T-t}$ for the original MCFs), Theorem \ref{thm_rates} (Rate of convergence) implies that the assumption \eqref{prox_ass} holds true. Thus, applying Theorem \ref{thm_cond_back} (Conditional Uniqueness) we get that $M^1_t=M^2_t$ for every $t\in [0,T]$.
\end{proof}
}
We now sketch the proof of {these} theorems, in opposite order. {We start by noting} the condition $M^2_0=M^1_0$, shared by both theorems, implies that $M^2_t$ {can be written} as a graph of a function $u$ over $M_t:=M^1_t$, perhaps after reducing $T$. This function $u$ is approximately a solution to the linearization of backwards MCF $\tilde{L}u\cong 0$, where $\tilde{L}u=L +|A|^2u$ and $L$ is the backwards heat operator
\begin{equation}
Lu= \partial_tu+ \Delta u.
\end{equation}
This follows from the standard theory (and in particular \cites{CS, Wang_unique}), and the details are discussed in Section \ref{prelim_sec}.

\bigskip

In order to prove Theorem \ref{thm_cond_back} (Conditional Uniqueness), we strive to obtain a global Carleman type inequality, bounding {the} weighted $W^{1,2}$ norm of $u$ via {the} weighted $L^2$ norm of $Lu$, {following the ideas} of \cite {LP}.\footnote{In this setting, the reaction terms is not the main source of difficulty, so there is no loss estimating via $L$ rather than $\tilde{L}$.} Importantly, in order to absorb the singularity, polynomial weights do not {suffice}, which {is reflected} in the exponential type decay imposed in \eqref{prox_ass}. 

The key estimate leading to Theorem \ref{thm_cond_back}  {states, for a function $w$ on $M_t$, that satisfies $w(\cdot,T)=0$, $w(\cdot,0)=0$, and decays to zero in $C^1$ faster than $\exp(-t^{-\beta_1})$, that there exists a $\varepsilon>0$ such that} for $\beta\in (0,\beta_1)$ and any $\alpha$ sufficiently large     

\begin{equation}\label{key_est}
\|Lw\|^2_{\mathcal{G}_{\alpha,\beta}} \geq  \|t^{-1-\beta/2}w\|^2_{\mathcal{G}_{\alpha,\beta}}+\varepsilon \|t^{-1/2}w\|^2_{\mathcal{G}_{\alpha,\beta},1},
\end{equation}

where 
\begin{equation}
\mathcal{G}=\mathcal{G}_{\alpha,\beta}=\exp\left(\frac{\alpha}{t^\beta}\right),\;
\textrm{and}\qquad\|w\|^2_{\mathcal{G}}=\int_{0}^T\int_{M_t}w^2\mathcal{G},\qquad \|w\|^2_{\mathcal{G},1}=\int_{0}^T\int_{M_t}|\nabla w|^2\mathcal{G}.
\end{equation}
See Theorem \ref{smooth_global_Carleman}. While estimate \eqref{key_est} is very similar to the one obtained in \cite{LP}, the methods therein {can} only produce it if $\beta$, and hence $\beta_1$, is sufficiently large (depending on the AC singularity model). Such  an estimate with {a} \textit{specific} $\beta<\infty$ is completely useless, since obtaining such rates a priori is not feasible. Indeed, any local, quasi-linear based argument using that $M^2_0=M^1_0$ could not produce $\beta_1>1$, and our rate of decay theorem, Theorem \ref{thm_rates} only produces some, possibly very small $\beta_1>0$.

Working with arbitrary $\beta$ (and $\beta_1$), the strategy of \cite{LP} does lead to an inequality similar to \eqref{key_est}, but with additional, possibly big, negative weighted $W^{1,2}$ terms on the right-hand side, which are supported on a (scale invariant) shrinking ball $I^r_t:=\{|x|\leq r\sqrt{t}\}$ with $r<\infty$. Loosely speaking, the obtained estimate (See Theorem \ref{smooth_global_Carleman_neg_term}) {states}   

\begin{align}\label{key_est_minus}
\|Lw\|^2_{\mathcal{G}_{\alpha,\beta}} \geq  &\|t^{-1-\beta/2}w\|^2_{\mathcal{G}_{\alpha,\beta}}+\|t^{-1/2}w\|^2_{\mathcal{G}_{\alpha,\beta},1}\\
-C&\|t^{-1-\beta/2}w\|^2_{I^r_t,\mathcal{G}_{\alpha,\beta}}-C\|t^{-1/2}w\|^2_{I^r_t,\mathcal{G}_{\alpha,\beta},1},\nonumber
\end{align}
where 
\begin{equation}
\|w\|^2_{I^r_t,\mathcal{G}}=\int_{0}^T\int_{M_t\cap I^r_t}w^2\mathcal{G},\qquad \|w\|^2_{I^r_t,\mathcal{G},1}=\int_{0}^T\int_{M_t\cap I^r_t}|\nabla w|^2\mathcal{G}.
\end{equation}
The negative terms on the {right-hand} side are due to the evolution of the metric and the volume form. The conicality and smoothness allow {us} to absorb these bad terms outside of $I^r_t$ when $r$ is large, but considering the (standard) volume  inside the region $I^r_t$ does not allow one to utilize that $M_t$ {is close to being} self-shrinking, leading to these negative terms.

\bigskip

To complement this estimate, we take $R\gg r$ and use the shrinker-like behaviour of $M_t$ inside $I^R_t$. Taking weights which combine the Gaussian, polynomials and the $\mathcal{G}$, and arguing as in \cite{LP} with the Gaussian volume and these weights,  we get  that  if $\bar{w}$ is obtained by cutting off $w$ between $I^R_t$ and $I^{R+1}_t$ then,  roughly speaking

\begin{equation}\label{Car_res_cut}
\left\|L\bar{w} \right\|^2_{I^{R+1}_t,\Phi\mathcal{G}} \geq \left\|t^{-1-\beta/2}w \right\|^2_{I^R_t,\Phi \mathcal{G}} +\left\|t^{-1/2}w\right\|^2_{I^{R}_t,\Phi\mathcal{G}},
\end{equation}
where $\Phi(x,t)=\exp(-|x|^2/4)$. This is done{ at} the level of the rescaled flow in Theorem \ref{Core_Car}, which implies \eqref{Car_res_cut} {after} changing variables. It is important that the Gaussian estimate \eqref{Car_res_cut} is applied to  $\tilde{w}$ and not $w$:  the contributions from the region were $M_t$ is not self-shrinking could not have been absorbed in such an argument.

\bigskip

To obtain \eqref{key_est} (which is done in Theorem \ref{smooth_global_Carleman}), we multiply \eqref{Car_res_cut} by a large constant (depending on $C$ and $r$) and add it to \eqref{key_est_minus}. Doing so, the terms in the {right-hand} side of \eqref{Car_res_cut} control the negative terms on the {right-hand} side of \eqref{key_est_minus}. When $R\gg r$, due to the weight $\Phi$, the terms resulting from cutting $w$ off to obtain $\bar{w}$ are absorbed by the positive terms in \eqref{key_est_minus}, giving \eqref{key_est}.

Once \eqref{key_est} is obtained, Theorem \ref{thm_cond_back} follows as in \cite{LP}, where the larger polynomial coefficients are used to absorb the {non-linearity} and the linear reaction term.

\bigskip  
 
The strategy of the proof Theorem \ref{thm_rates} ({Rate of convergence}) is to push {a} good estimate for the decay of $u$ from the smooth part of $M_t$ into the part of $M_t$ which is conical, and then into the part of $M_t$ where it behaves like the shrinker. The first two steps follow the strategy \cite{Wang_unique}, while the third requires a different argument. 

First, like in  \cite{Wang_unique}, we use the Carleman estimate of Escauriaza--Fer\'andez \cite{EF}, showing that around points in $M_0$ where $M_s$ is expressible as a graph with bounded $C^4$ norm over a ball of fixed radius for all $s\in [0,t]$, we have 
\begin{equation}
|u(x,t)|+|\nabla u(x,t)| \leq e^{-\delta/t},
\end{equation}
for some $\delta>0$ depending on the above bounds. {In particular, this} applies outside of $B(0,\varepsilon)$, where the flow $M_t$ is smooth up to time $0$. In fact, by using this estimate for the parabolic {dilations} of the flow, we get that outside of $B(0,R\sqrt{t})$ we have
\begin{equation}\label{decay_smooth}
|u(x,t)|+|\nabla u(x,t)| \leq \exp\left(-\frac{\delta|x|^2}{t}\right).
\end{equation}
Note that while for points $x$ with $|x| \geq \varepsilon$ the above decay is stronger than the one asserted in  Theorem \ref{thm_rates}, for point $x$ with $|x|=R\sqrt{t}$, \eqref{decay_smooth} gives no decay at all.

\bigskip

In order to push the decay further in, we use the fact, due to  Chodosh--Schulze, \cite{CS}, {that} there exists some $\varepsilon>0$ and $R<\infty$ such that $M_t$ is close to being conical in $B(0,\varepsilon)\backslash B(0,R\sqrt{t})$ for every $t$ sufficiently small. {Such} conical behaviour puts us in the position to use the methods of Escauriaza, Seregin and Sverak \cite{ESS} and Wang \cite{Wang_unique}. 

In \cite[Lemma 3.4]{Wang_unique}, for an asymptotically conical end of a {self-shrinker}, Wang used \textit{both} the self-shrinking behaviour of the end \textit{and} its conicality,  to {derive Carleman} estimates, depending on parameters $\delta$ which can be taken as \textit{any} value in the interval  $(0,1)$, and $\alpha \gg 1$. Wang's estimate, which holds for $M_t$, {a self-shrinking} AC flow, is of the form
\begin{align}\label{Wang_carl}
\int _0^{T}&\int_{M_t} \left(|\nabla_{M_t}v|^2+v^2\right) \mathcal{G} d\mu_tdt\\
&\leq \int _0^{T}\int_{M_t}\left(\frac{dv}{dt}+\Delta_{M_t}v\right)^2\mathcal{G} d\mu_tdt+\int_{M_T}|\nabla_{M_{t}}v|^2 \mathcal{G} d\mu_tdt,\nonumber
\end{align}
where $v$ {vanishes} at time $t=0$ and outside of the annulus $B(0,r)\backslash B(0,R)$, when $r\gg R$ and $R$ is sufficiently large, and where 
\begin{align}\label{final_intro}
            \mathcal{G}=\mathcal{G}_{\alpha,T}:=\mathrm{exp}\left(2\alpha(T-t)(|x|^{1+\delta}-R^{1+\delta})]+2|x|^2\right).
\end{align}
In Theorem \ref{Car_rough_con|} we show that \eqref{Wang_carl} holds true for any $M_t$ (not necessarily {self-shrinking}), provided the flow is \textit{roughly conical} in the annulus $\{x\;|\;R\leq  |x| \leq r\}$ (in a precise sense: see Definition \ref{rough_con_def}) , \textit{provided} $\delta$ is taken to be sufficiently close to $1$. While at some points a bit more care is needed in this analysis, this is a relatively minor modification of \cite{Wang_unique}. This is perhaps not that surprising, as in the original context of \cite{ESS}  only {conicality} was used, and as Kotschwar was able to amplify asymptotic {conicality} for Ricci flows to self-shrinking {behaviour}  \cite{Kot_cone}.

Estimate \eqref{Wang_carl} is used in \cite{Wang_unique} on carefully chosen cutoffs of the height $u$ of one {self-shrinking} flow over the other. The ``cutoff-ed" function is supported on the  annulus $B(0,r)\backslash B(0,R)$. After quite some work, combined with the decay rate \eqref{decay_smooth}, Wang gets that at the AC {self-shrinker} case, after taking $r\rightarrow \infty$ one has (roughly)
\begin{equation}
\int_{0}^{T}\int_{M_t-B(0,R)}u^2 \leq \exp(-\alpha),  
\end{equation}
which after taking the limit as $\alpha\rightarrow \infty$ gives the desired result. 

In our case of AC singularities, the {roughly} conical {behaviour} is {expected}, due to \cite{CS}, but only inside an annulus of shrinking inner {radius}{and} fixed outer {radius:} $B(0,r)\backslash B(0,R\sqrt{t})$.  {Rather} {than} taking the limit as $r\rightarrow \infty$ (which is impossible in our case since $M_t$ is not asymptotically conical), in Theorem \ref{con_cone} we optimize the choice of $\alpha$ in the estimate \eqref{Wang_carl} so that, coupling it with \eqref{decay_smooth} gives the best integral bound on the height square. This gives an estimate of the form
\begin{equation}
\int_0^t\int_{M_s\cap(B(0,r)-B(0,R\sqrt{t}))}u^2 \leq \exp\left(-\frac{1}{t^{(1-\delta)/2}}\right),
\end{equation}
which yields similar $C^1$ estimates outside $B(0,R\sqrt{t})$ in a standard fashion, after reducing the exponential ever so slightly. 

\bigskip

Finally, in Section \ref{sec_core} we push the decay into the region $\{x\in M_t \cap B(0,R\sqrt{t})\}$, which we call the core shrinker region. When considering the rescaled flows
\begin{equation}
N^i_{\tau}= \frac{M^i_{e^{\tau}}}{e^{\tau/2}},
\end{equation}
the $N^{i}_{\tau}$ converge in $B(0,2R)$ to a ball in the {self-shrinker} $\Sigma$, and the {height function} $w$ of one flow over the other solves a linear backwards parabolic equation. We then use a stability inequality from the theory of inverse problems \cite{Isakov}, which roughly states that a solution $w$ to a (backwards) parabolic equation on $\Omega\times [0,1]$ which has small Dirichlet \textit{and} Neumann values on some portion of the Dirichlet boundary, i.e. on  $\Gamma\times [0,1]$,  where $\Gamma\subseteq \partial \Omega$ is open, has to be respectively small inside the space time domain, without any assumption on the behaviour of $w$ at the initial or final time. Applying this result on coverings of the core shrinker region that have portions of the boundary outside of $B(0,R)$ allows for the final push of the decay of $u$ (which is related to $w$ by rescaling) into the core region, which then completes the proof of Theorem \ref{thm_rates}.  

\bigskip

{

As it is both easy and instructive for potential future applications towards the proof of Conjecture \ref{backwards_uniqueness_conjecture},  we end this introduction by showing how Theorem \ref{main_thm} implies the backwards uniqueness of low entropy flows in $\mathbb{R}^4$.

\begin{proof}[Proof of Corollary \ref{backwards_uniqueness_cor}]
Denote
\begin{equation}\label{t1def}
t_1=\inf \Big\{t\in [0,t_0] \; | \; M^1_s=M^2_s\; \textrm{for every} s\in [t,t_0]\Big\}
\end{equation}

By \cite{BW_cone}, each $(M^i_t)_{t\in [0,t_0]}$ encounters only finitely many singularities, and the regular set of $M^i_t$ is connected for every $t\in [0,t_0]$ (we conflate $M^i_t$  with its support). Thus 
\[
M^1_{t_1}=M^2_{t_1}
\]
Since $t_1$ is prior to the extinction time, \cite{BW_cone} further implies that either $t_1$ is a smooth time for $M_t$, or that $M_t$ encounters finitely many asymptotically conical  singularities at  time $t_1$.  In the former case, the smooth backwards uniqueness of \cite{KS_simple} yields a contradiction to \eqref{t1def}, unless $t_1=0$. In the latter case, our main theorem \ref{main_thm} (combined with Remark \ref{several_sings}) yields a contradiction to \eqref{t1def}. Thus $t_1=0$, concluding the proof.
\end{proof}
}

\subsection{Organization}
The organization of this paper is as follows. In Section 2, we recount the relevant theory from the literature, and in particular obtain the graphicality of one flow over the other. We further discuss that AC assumption and its validity. In Section 3 we prove Theorem \ref{thm_cond_back} (Conditional backwards uniqueness), demonstrating that a sufficiently fast convergence of the flows towards each other implies they coincide. In Section 4 we prove Theorem \ref{thm_rates} (Rate of convergence), demonstrating that the convergence rate required for Theorem \ref{thm_cond_back} (Conditional backwards uniqueness) is achieved. 

\subsection{Acknowledgements}
We are grateful to Brett Kotschwar, Arick Shao, Felix Schulze and Brian White for useful discussions. This project has received funding from the European Research
Council (ERC) under the European Union’s Horizon 2020 research and innovation programme, grant agreement No. 101116390. JMDH was supported by the
EPSRC through the grant EP/Y017862/1, Geometric Flows and the Dynamics of
Phase Transitions.

\section{{preliminaries}}\label{prelim_sec}
The purpose of this section is twofold: to introduce the necessary {theory from mean curvature flow literature required} in the rest of this paper to a {general mathematical audience}, and to reduce the geometric assumptions of Theorem \ref{main_thm} to analytic assumption. 
The {non-}geometric-analyst can safely skip all the proofs of this section and simply take their output as an assumption.

As was mentioned in the introduction, throughout the paper we will be concerned with compact backwards mean curvature flows: For $i=1,2$,  $(M^i_t)_{t\in [0,T]}\subseteq \mathbb{R}^{n+1}$ is compact for every $t\in[0,T]$, smooth outside of $(0,0)$ and satisfies
\begin{equation}
\left(\frac{d}{dt}x\right)^{\perp}=-\vec{H} \qquad (x,t)\neq (0,0).
\end{equation}
We have already defined what an asymptotically conical singularity is for {an} MCF. 

\begin{definition}[Asymptotically conical singularity]\label{AC_def_back}
We say that a BMCF $(M_t)_{t\in [0,T]} $has an asymptotically conical singularity at $(0,0)$ if the MCF $M_{T-t}$ has an AC singularity at $(0,T)$, according to definition \ref{AC_forwards}.
\end{definition}

That is, a BMCF $M_t$ encounters an AC singularity at $(0,0)$ if  there exists a smooth hypersurface $\Sigma\subseteq \mathbb{R}^{n+1}$ such that
\begin{equation}\label{tangent_simple}
\lim_{t\rightarrow 0}\frac{M_t}{\sqrt{t}}=\Sigma \qquad \textrm{in\;} C^{\infty}_{\mathrm{loc}}(\mathbb{R}^{n+1})
\end{equation}
and  there exists a {smooth} cone $\Lambda \subseteq \mathbb{R}^{n+1}$ {(i.e. $\Lambda -\{0\}$ is smooth)} such that
\begin{equation}
\lim_{\lambda\rightarrow 0} \lambda \Sigma = \Lambda \qquad \textrm{in\;} C^{\infty}_{\mathrm{loc}}(\mathbb{R}^{n+1}-\{0\}).
\end{equation}

As was already explained in the introduction, the above $\Sigma$ satisfies the elliptic partial differential equation 
\begin{equation}\label{srink_eq}
\vec{H}+\frac{x^{\perp}}{2}=0,
\end{equation}
where $x$ is the position vector and $x^{\perp}$ is the part of $x$  is normal to the tangent plane at $x$. Equivalently $(\sqrt{t}\Sigma)_{t\in (0,\infty)}$ is a BMCF.

\subsection{Analytic structure of coinciding conical flows}

In this section we collect some results regarding AC singularities, and the graphical gauge for our coinciding flows. All the results in this section either appear explicitly in (or as an implicit consequence of) the work of Chodosh-Schulze \cite{CS} and Wang \cite{Wang_unique}, or follow easily from combining their results with standard theory. 

our first theorem collects results from \cite{CS}. The proof given here is based on the same ideas as there, but is significantly shorter, as our definition assumes the uniqueness of tangents at an AC singularity, the main result of \cite{CS}.

\begin{theorem}[Structure of flows with AC singularity \cite{CS}]\label{AC_sing_struc}
Let $(M_t)_{t\in [0,T]}$ be a backwards MCF with an AC singularity at $(0,0)$, modeled on a {self-shrinker} $\Sigma$. Then 
\begin{enumerate}
\item (Type I singularity) There exists some $C<\infty$ such that 
\begin{equation}\label{A_inj_bd_cone}
\sup_{x\in M_t}|A(x,t)|\leq  \frac{C}{\sqrt{t}}, \qquad \mathrm{inj}^N_{M_t}  \geq \frac{\sqrt{t}}{C},
\end{equation}
where $\mathrm{inj}^N_{M_t}$ is the normal injectivity radius of $M_t$
\item (Conical curvature behaviour): For every $\varepsilon>0$ there exists $R<\infty$ such that 
\begin{equation}\label{A_inj_bd_away}
\sup_{x\in M_t\cap \{|x|\geq R\sqrt{t}\}}|A(x,t)|\leq  \frac{\varepsilon}{\sqrt{t}}
\end{equation}
\item\label{rough_con} (Rough {conicality}): There exist some  $C<\infty$ such that for every {$\varepsilon>0$} there exists some $T_0\in (0,T]$, $c<\infty$ and $r>0$ with the following significance:
For every $\lambda\geq 1$, the flow $\lambda M_{\lambda^{-2} t}$ for $t\in [0,\lambda^2 \tau]$ is $(C,c,\varepsilon)$ \textbf{roughly conical} in $B(0,\lambda r)$ (see Definition \ref{rough_con_def} below).

\item (Graphicality over the shrinker) There exists some $r>0$ and $T_0\in (0,T]$ such that $(M_t\cap B(0,r))_{t\in [0,\tau]}$ can be expressed as a normal graph of a smooth function $v$, with bounded $\mathrm{C}^1$, over a piece of the {self} shrinking flow $(\sqrt{t}\Sigma)_{t\in [0,T_0]}$. Moreover $v=o(1)$ as $t\rightarrow 0$.
\end{enumerate}
\end{theorem}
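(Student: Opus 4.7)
The plan is to decompose a neighbourhood of $(0,0)$ into three overlapping regimes and to patch quantitative estimates on each: (a) the parabolic core $|x|\leq R\sqrt{t}$, where the rescaled flow $M_t/\sqrt{t}$ is, by hypothesis, a $C^\infty_{\mathrm{loc}}$ perturbation of the smooth self-shrinker $\Sigma$; (b) the conical corridor $R\sqrt{t}\leq |x|\leq r_0$, where the convergence $\lambda\Sigma\to\Lambda$ makes $\Sigma$ itself close to its smooth asymptotic cone $\Lambda$; and (c) the truly smooth exterior $|x|\geq r_0$, where Remark \ref{con_neigh} identifies $M_t$ with a smooth hypersurface up to and including $t=0$. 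Each of the four conclusions is obtained by assembling estimates over these three regimes.

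For the Type I bound in (1), regime (a) transfers the smoothness of $\Sigma$ on $B(0,R)$ (giving $|A_\Sigma|\leq C_R$ and $\mathrm{inj}^N_\Sigma\geq c_R$) to $|A_{M_t}|\leq 2C_R/\sqrt{t}$ and $\mathrm{inj}^N_{M_t}\geq c_R\sqrt{t}/2$ on $M_t\cap B(0,R\sqrt{t})$. Regime (c) gives absolute bounds $|A|\leq C$ and $\mathrm{inj}^N\geq c$ that dominate $C/\sqrt{t}$ for $t$ in a bounded range. The intermediate regime (b) is the key: asymptotic conicality of $\Sigma$, combined with smoothness of $\Lambda\setminus\{0\}$, gives $|A_\Sigma|(x)\leq C/|x|$ and $\mathrm{inj}^N_\Sigma(x)\geq |x|/C$ for $|x|$ large, which transfer through $C^\infty_{\mathrm{loc}}$ convergence to matching bounds on $M_t$ in the corridor, both dominated by $C/\sqrt{t}$ once $|x|\geq \sqrt{t}$.

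Conclusion (2) is the same analysis with the constant replaced by an arbitrary $\varepsilon$: asymptotic conicality of $\Sigma$ implies $|A_\Sigma|\to 0$ at infinity, so $|A_\Sigma|\leq \varepsilon/2$ on $\{|x|\geq R\}$ for $R$ large; transferring by $C^\infty_{\mathrm{loc}}$ convergence handles the region $R\sqrt{t}\leq |x|\leq R'\sqrt{t}$, and regime (c) handles further out. For conclusion (4), one combines (1) with $C^\infty_{\mathrm{loc}}$ convergence in the parabolic core and conical closeness in the corridor to apply the inverse function theorem: $M_t\cap B(0,r)$ is expressible as a normal graph over $\sqrt{t}\Sigma$ with $C^1$ norm controlled by the $C^\infty_{\mathrm{loc}}$ distance of $M_t/\sqrt{t}$ to $\Sigma$ in the core and by the quantitative conical approximation in the corridor; both tend to zero as $t\to 0$, giving $v=o(1)$.

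Finally, for (3), the parabolic rescaling $\lambda M_{\lambda^{-2}t}$ has exactly the same tangent flow at $(0,0)$ as $M_t$, namely $\Sigma$ with cone $\Lambda$, and all the quantitative bounds produced in (1) and (2) are parabolically scale-invariant. Thus verifying Definition \ref{rough_con_def} reduces to checking it at $\lambda=1$ on a sufficiently small $B(0,r)$ with parameters extracted from (1), (2), and the convergence rates $M_t/\sqrt{t}\to\Sigma$ and $\lambda\Sigma\to\Lambda$. The principal obstacle in the whole theorem is precisely this verification: Definition \ref{rough_con_def} couples the shrinker-like behaviour in the parabolic core with the conical behaviour in the outer annulus through quantitative estimates that must match its exact hypotheses, and producing the right constants $c,C$ and radii $R,r,T_0$ from only the qualitative convergence hypotheses requires a careful diagonal choice. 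The remaining ingredients are standard perturbation arguments combined with the smooth evolution of $\sqrt{t}\Sigma$.
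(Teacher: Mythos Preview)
Your three-regime decomposition matches the paper's strategy, and the treatment of the core (a) and the exterior (c) is essentially as in the paper. The gap is in the conical corridor (b). You assert that the bounds $|A_\Sigma|(x)\leq C/|x|$ on the shrinker ``transfer through $C^\infty_{\mathrm{loc}}$ convergence to matching bounds on $M_t$ in the corridor,'' but this does not follow from the hypotheses. The convergence $M_t/\sqrt{t}\to\Sigma$ only controls \emph{fixed} compact sets of the rescaled flow: for each fixed $R$ there is $t_R>0$ so that $M_t/\sqrt{t}$ is close to $\Sigma$ on $B(0,R)$ for $t\leq t_R$. A point $x$ in the corridor with $|x|=\rho$ fixed and $t\ll\rho^2$ corresponds to $x/\sqrt{t}$ at distance $\rho/\sqrt{t}\to\infty$ in the rescaled picture, where the hypothesis says nothing. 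Symmetrically, the smooth limit in regime (c) gives $|A|\leq C_\rho$ on $\{|x|\geq\rho\}$ for each $\rho>0$, but you have no mechanism to prevent $C_\rho$ from blowing up as $\rho\to 0$, nor to show $C_\rho\lesssim 1/\rho$.

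The paper closes this gap with pseudolocality \cite[Theorem~1.5]{INS} and the Ecker--Huisken interior estimates. At each time $t_1\in(0,T_0]$, on the annulus $\mathcal{A}_{t_1}=B(0,99c\sqrt{t_1})\setminus B(0,2c\sqrt{t_1})$ the rescaled convergence \emph{at time $t_1$} shows $M_{t_1}$ is a small graph over $\sqrt{t_1}\Sigma$, which in that annulus is itself a small graph over a hyperplane through the origin (since $|x|/\sqrt{t_1}\geq 2c$ is large on $\Sigma$). Pseudolocality then propagates this planar graphicality \emph{backward} to all $s\in[0,t_1]$, yielding $|\nabla^i A(x,s)|\leq C/|x|^{i+1}$ for $x\in\mathcal{A}_{t_1}$ and every $s\leq t_1$. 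Since any $x$ in your corridor lies in $\mathcal{A}_{t_1}$ for a suitable $t_1\in[t,T_0]$ (namely $t_1\sim |x|^2/c^2$), this supplies exactly the conical bounds $|\nabla^i A|\leq C/|x|^{i+1}$ and $|\langle x,\nu\rangle|\leq\varepsilon|x|$ demanded by Definition~\ref{rough_con_def}, and with them the corridor contributions to (1), (2) and (4). Without pseudolocality or an equivalent backward-propagation tool, the corridor argument does not close.
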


\begin{definition}\label{rough_con_def}
We say that a backwards  MCF $(M_t)_{t\in [0,T]}$ is $(C,c,\varepsilon)$ \textbf{roughly conical} at a ball $B(0,r)$ if for every $t\in [0,T]$ and $x\in B(0,r)-B(0,c\sqrt{t})$ we have
\begin{equation}\label{A_scales_like_cone}
|\nabla^i A(x)| \leq \frac{C}{|x|^{i+1}},\qquad i=0,\ldots 4 
\end{equation}
and 
\begin{equation}\label{x_is_like_cone}
|\langle x,\nu \rangle | \leq \varepsilon|x|.
\end{equation}
\end{definition}

\begin{proof}[Proof of Theorem \ref{AC_sing_struc}]
All the statements follow from the same construction: First, since $\Sigma$ is asymptotic to a cone, there exists a constant $C<\infty$ be such that for every $x\in \Sigma$ with $|x|\geq 1$
\[
|A(x)| \leq \frac{C}{|x|}, \qquad \mathrm{inj}^N(x) \geq \frac{|x|}{C}, 
\]
where $\mathrm{inj}^N$ is the normal injectivity radius of $\Sigma$ at $x$. Since $\sqrt{t}\Sigma$ is a BMCF, it follows from standard parabolic estimates (or the local maximum principle arguments of \cite{EH_interior}) that, after increasing $C$, if $x\in \Sigma$ and $|x|\geq 1$
\begin{equation}\label{A_der_bd_proof}
|\nabla^i A(x)| \leq \frac{C}{|x|^i},\qquad i=0,1,\ldots 4. 
\end{equation}

Since $\Sigma$ is asymptotic to a cone, it further follows that for every $\varepsilon>0$ with $\varepsilon<C^{-1}$, there exists some $c=c(\varepsilon) \gg \varepsilon^{-4}$ such that 
\begin{equation}\label{almost_cone}
\left|\langle x, \nu \right \rangle| \leq \varepsilon^2 |x|, \qquad x\in \Sigma,\; |x|\geq c,
\end{equation}
and such that for every $x\in \Sigma$ with $|x|\geq c$ there exists a linear hyperplane $P$ through $x$ (and the origin), such that $\Sigma\cap B(x,\varepsilon c)$ is a Lipschitz graph over $P$, with Lipschitz constant bounded by $\varepsilon^2$.

\bigskip

Since $M_t/\sqrt{t}\rightarrow \Sigma$, we can take $T_0$ such that that for every $t\in [0,T_0]$,  the hypersurface $M_{t}/\sqrt{t}$ can be written as a normal graph of a smooth function $u:B(0,100c)\cap \Sigma \rightarrow \mathbb{R}$ with $\mathrm{C}^{10}$ norm bounded by $\varepsilon^2$. Take  $r=99c\sqrt{T_0}$. Let us further denote $\Sigma_s=\sqrt{s}\Sigma$, and 
\[
\mathcal{A}_t=B(0,99c\sqrt{t})-B(0,2c\sqrt{t})
\]

Applying  pseudolocality \cite[Theorem 1.5]{INS} and the Ecker-Huisken gradient estimate  \cite{EH_interior}, together with the choices of constant above we get that:

\begin{starenv*}
\starlabel{important}
for every $t\in (0,T_0]$ and every point $p\in \Sigma_t\cap \mathcal{A}_t$, there exists some linear hyperplane $P$ through $p$  (and the origin) such that both $(M_s)_{s\in [0,t]}$ and $(\Sigma_s)_{s\in [0,t]}$ are graphical, with $C^1$ norm bounded by $\varepsilon/10$, over the ball $B(p,\frac{\varepsilon c \sqrt{t}}{2})$ in $P$.
\end{starenv*}
In light of the choice of $T_0$ and \eqref{A_der_bd_proof}, we have 
\begin{align}\label{A_der_bd_all_proof}
&|\nabla^i A(x,s)| \leq \frac{C}{|x|^{i+1}}, \qquad x\in M_s\cap \mathcal{A}_t,\; s\in [0,t],\; i=0,\ldots 4\nonumber \\
&|\nabla^i A(x,s)| \leq \frac{C}{|x|^{i+1}}, \qquad x\in \Sigma_s\cap \mathcal{A}_t,\; s\in [0,t], \; i=0,\ldots 4.
\end{align} 
Finally, observe that if $t\in [0,T_0]$ and $x\in M_t$ then either:
\begin{enumerate}[(i)]
\item $x\notin B(0,r)$ or,
\item there exists some $t_1\in [t,T_0]$ such that $x\in A_{t_1}$, or 
\item $x\in B(0,2c\sqrt{t})$.
\end{enumerate} 
Having established the above structure, (1)-(4) follow readily.

Indeed, for (1) and (2), it suffices to prove it for $t\in (0,T_0]$. Taking and $x\in M_t$, if option (iii) above happens then the curvature is bounded, as outside an open ball is compact, in space time, and smooth. If (ii) happens then estimate \eqref{A_der_bd_all_proof} and the definition of $\mathcal{A}_{t_1}$ and the choice of constants imply 
\begin{equation}
|A(x)| \leq \frac{C}{|x|} \leq \frac{C}{2c\sqrt{t_1}}  \leq \frac{\varepsilon}{\sqrt{t_1}} \leq \frac{\varepsilon}{\sqrt{t}}.
\end{equation}   
Finally, if (iii) holds then $|A(t)| \leq \frac{2D+2}{\sqrt{t}}$, where $D$ is a bound on the second fundamental form of $\Sigma$. The same argument also provides the asserted normal injectivity radius bound. This completes the proof of both (1) and (2). 

Since the curvature estimates in the region $\mathcal{A}_{t}$ are scale invariant, this also proves \eqref{A_scales_like_cone}, while \eqref{x_is_like_cone} follows as a consequence of \starref{important}: for every $t_1\in [t ,T_0]$ a portion of $M_t\cap \mathcal{A}_{t_1}$ can be represented in a ball as a graph over a hyperplane through the origin with $C^1$ norm bounded by $\varepsilon/10$. This establishes (3), but also (4):  As the same  planar graphical representation holds true for $\Sigma_t$ in $\mathcal{A}_{t_1}$, this also shows the graphical representation of $M_t$ over $\Sigma_t$ in  $B(0,r)-B(0,2c\sqrt{t})$. The graphical representation inside $B(0,2c\sqrt{t})$ follows from the choice of $T_0$, while the smoothness of the graphical function follows since it satisfies a parabolic PDE with smooth coefficients (see Lemma \ref{graph_eq_evolve} below) . This completes the proof of the theorem.   
\end{proof}

At the end of the above proof, we have used that the height function of one BMCF over another satisfies a quasilinear parabolic PDE. This is standard, but as this would be used repeatedly throughout this paper, we will include it here.

\begin{lemma}[\cite{Her_reif}, Lemma 4.18]\label{graph_eq_evolve}
Let $(M^i_{t})_{t \in (0,t_0]}$  $i=1,2$ be two  mean curvature flows such that $M^2_t$ is a normal graph of a function $u$ over $M^1_t$, for which both $|A(x)||u(x)|$ and $|\nabla u(x)|$ are sufficiently small. Then
\begin{align}\label{par_eq}
u_{t}&=-\Delta u - |A|^2u+(1+\varepsilon_1(x,u,\nabla u))\mathrm{div}(\mathcal{E} (\nabla u))\\
&+\varepsilon_1(x,u,\nabla u)(\Delta u+\mathrm{div}(\mathcal{E} (\nabla u)))+\varepsilon_2(x,u,\nabla u),
\end{align}
where $\mathcal{E}:TM^1_{t}\rightarrow TM^1_{t}$ and $\varepsilon_i:M^1_{t}\rightarrow \mathbb{R}$ are of the form
\begin{align}\label{E_est}
\mathcal{E}=A\ast u+\nabla u, \qquad  \varepsilon_1= A \ast u +\nabla u, \qquad \varepsilon_2 = (A\ast u +\nabla u) \ast  (A\ast u +\nabla u).
\end{align}    
where the above $\ast$ notation indicates that the expression on the left hand side (resp\.  its derivatives), are bounded by constant times the corresponding norm of the {right-hand} side (resp\. its derivatives).
\end{lemma}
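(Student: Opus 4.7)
The proof is a direct geometric computation tracking how the induced metric, unit normal, and mean curvature of $M^2_t$ depend on the height function $u$ and its first two derivatives along $M^1_t$. I would parameterize $M^2_t$ by $y(x,t) = x + u(x,t)\nu(x,t)$ for $x\in M^1_t$, where $\nu$ is the unit normal to $M^1_t$. Differentiating gives $\partial_i y = \partial_i x - u\,A(\partial_i x) + \partial_i u \cdot \nu$, from which the induced metric on $M^2_t$ takes the form $g^2_{ij} = g^1_{ij} + (A\ast u)\ast(A\ast u) + \partial_i u\, \partial_j u$, so $g^{2,ij} = g^{1,ij} + \delta g^{ij}$ with $\delta g^{ij}$ of schematic type $(A\ast u + \nabla u)\ast(A\ast u + \nabla u)$. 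A similar expansion writes the unit normal of $M^2_t$ as $\nu_2 = c_0\bigl(\nu - (\nabla u)^\sharp + \text{l.o.t.}\bigr)$, with $c_0 = 1 + O((A\ast u + \nabla u)^2)$.

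The second step is to compute the mean curvature of $M^2_t$. Using Gauss--Weingarten, $h^2_{ij} = \langle \partial_i\partial_j y,\, \nu_2\rangle$ expands to $h_{ij} + \nabla^2_{ij} u - u\,(A\ast A)_{ij}$ plus higher-order corrections. Tracing with $g^{2,ij}$ and rearranging yields
\[
H^2 = H + \Delta u + |A|^2 u + \mathrm{div}(F(u,\nabla u)) + \mathcal{R},
\]
with $F$ of schematic form $A\ast u + \nabla u$ and $\mathcal{R}$ algebraic of the form $(A\ast u + \nabla u)\ast(A\ast u + \nabla u)$. The divergence structure of the quasilinear correction can be verified directly, or deduced from the interpretation of $H$ as the first variation of area.

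Third, I combine this with the BMCF conditions $\partial_t x = -H\,\nu$ on $M^1_t$ and $(\partial_t y)^{\perp} = -H^2\,\nu_2$ on $M^2_t$. Since $\partial_t y = -H \nu + u_t \nu + u\,\partial_t \nu$ and $\partial_t \nu$ is tangent to $M^1_t$, taking the inner product with $\nu_2$ gives
\[
u_t\,(\nu\cdot \nu_2) = -H^2 + H\,(\nu\cdot \nu_2) - u\,\langle \partial_t \nu,\, \nu_2\rangle.
\]
Substituting the expansion of $H^2$ and dividing through by $\nu\cdot \nu_2 = 1 + O((A\ast u + \nabla u)^2)$ produces $u_t = -\Delta u - |A|^2 u + (\text{nonlinear corrections})$.

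The final task is to organize these corrections into the three structural classes in the statement. The quasilinear flux divergence coming from $F(u,\nabla u)$ yields the term $(1+\varepsilon_1)\mathrm{div}(\mathcal{E}(\nabla u))$; the gap $g^{2,ij} - g^{1,ij}$ acting on the second derivatives of $u$ supplies the commutator $\varepsilon_1(\Delta u + \mathrm{div}(\mathcal{E}(\nabla u)))$; and the remaining curvature and normal-expansion terms assemble into $\varepsilon_2 = (A\ast u + \nabla u)\ast(A\ast u + \nabla u)$. The main obstacle is purely the bookkeeping -- ensuring every nonlinear term is absorbed into one of these three classes while preserving the correct derivative counts on $A$ under the $\ast$ notation. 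No genuinely new idea enters beyond the standard graph MCF computation, and the lemma's role is only to record the PDE in a form convenient for the Carleman arguments of later sections.
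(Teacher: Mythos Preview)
The paper does not give a proof of this lemma; it is quoted verbatim from \cite{Her_reif} (with an analogue for the rescaled flow cited from \cite{DHH}). Your outline is exactly the standard graph computation one carries out to establish such a result, and as a proof strategy it is correct.

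One small imprecision: your expansion of the induced metric $g^2_{ij} = g^1_{ij} + (A\ast u)\ast(A\ast u) + \partial_i u\,\partial_j u$ omits the \emph{linear} term $-2u A_{ij}$, and correspondingly $\delta g^{ij}$ is of schematic type $A\ast u + (A\ast u + \nabla u)\ast(A\ast u + \nabla u)$, not purely quadratic. This linear piece is in fact what produces the $A\ast u$ contribution to $\mathcal{E}$ when $\delta g^{ij}\nabla^2_{ij}u$ is rewritten in divergence form, so it is essential to the structural form \eqref{E_est}. You recover the correct answer later in your sketch, but the intermediate bookkeeping should be amended.
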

An analogous theorem holds for the rescaled MCF \cite[Lemma A.1]{DHH}

\bigskip

The second main theorem of this section is concerned with two BMCFs with AC singularities at $(0,0)$, which coincide at time $0$.

\begin{theorem}[Global graphicality]\label{graph_over_one_another}
Let $(M^i_t)_{t\in [0,T]}$ for $i=1,2$ be two BMCF in $\mathbb{R}^{n+1}$ with $M^1_0=M^2_0$, such that the $M^i_t$ have {an} AC {singularity} at $(0,0)$. Then there exists $T_0\in (0,T]$ such that for every $t\in [0,T_0]$, and {away from} the space time point $(0,0)$, $M^2_t$ can be written as a normal graph of a smooth function $u:M^1_t\rightarrow \mathbb{R}$ satisfying
{\begin{align}
|u(x,t)| = o(\sqrt{t}),\\
|\nabla u(x,t)|=o(1).
\end{align} }
\end{theorem}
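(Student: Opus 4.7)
The plan is to establish that the Hausdorff distance $d_H(M^1_t,M^2_t)$ decays as $o(\sqrt{t})$ with matching $C^1$-closeness of the flows, and then to invoke the lower bound $\mathrm{inj}^N_{M^1_t}\geq \sqrt{t}/C$ from Theorem \ref{AC_sing_struc}(1) to represent $M^2_t$ as a global normal graph $u$ over $M^1_t$ satisfying the desired estimates. The argument splits $M^1_t$ into an exterior region $\{|x|\geq \varepsilon\}$ and an interior region $\{|x|\leq r\}$ (with $\varepsilon<r$), analyzed by different tools and patched along the overlap via uniqueness of normal-graph parametrizations inside the tubular neighborhood.

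In the exterior region, Remark \ref{con_neigh} extends each $M^i_t$ smoothly to a common limit hypersurface $N=M^1_0=M^2_0$ on $\mathbb{R}^{n+1}\setminus\{0\}$, and $M^i_t\to N$ in $C^{\infty}_{\mathrm{loc}}$; in particular $|\vec{H}(M^i_t)|\leq C$ uniformly. Integrating the flow equation $|\partial_t x|=|\vec{H}|\leq C$ from $0$ to $t$ along trajectories gives $d(M^i_t,N)\leq Ct$ on this region, hence $d_H(M^1_t,M^2_t)\leq 2Ct=o(\sqrt{t})$, and analogously for $C^1$-closeness by the $C^{\infty}_{\mathrm{loc}}$ convergence.

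For the interior region, we first identify the shrinker models. The common $N$ has a well-defined tangent cone at the origin (equal to the asymptotic cone of each $\Sigma^i$ coming from the AC structure), and Wang's uniqueness of self-shrinkers with prescribed asymptotic cone \cite{Wang_unique} forces $\Sigma^1=\Sigma^2=:\Sigma$. Applied to each flow, Theorem \ref{AC_sing_struc}(4) then supplies $r,T_0>0$ such that $M^i_t\cap B(0,r)$ is a normal graph $v^i$ over $\sqrt{t}\,\Sigma$. Given any $\eta>0$, split $B(0,r)$ into a conical subregion $\{R\sqrt{t}\leq|x|\leq r\}$ and a shrinker subregion $\{|x|\leq R\sqrt{t}\}$: on the former, Theorem \ref{AC_sing_struc}(2) with small parameter yields $|\vec{H}|\leq \eta/(4\sqrt{t})$ for $R$ large, and time integration bounds trajectory displacements by $\eta\sqrt{t}/2$, giving $d(M^1_t,M^2_t)\leq \eta\sqrt{t}$; on the latter, the $C^{\infty}_{\mathrm{loc}}$ convergence of $M^i_t/\sqrt{t}\to \Sigma$ on $B(0,R)\cap\Sigma$ gives the same bound after unscaling. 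Since $\eta$ is arbitrary, $d_H(M^1_t,M^2_t)=o(\sqrt{t})$ in the interior as well, with matching $C^1$-closeness.

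The main obstacle is the conical transition zone $\{R\sqrt{t}\leq|x|\leq r\}$ of the interior: neither the rescaled shrinker description nor the smooth flow description alone produces the sharp $o(\sqrt{t})$ rate, but the improved Type I bound of Theorem \ref{AC_sing_struc}(2), which upgrades $|A|\leq C/\sqrt{t}$ to $\varepsilon/\sqrt{t}$ away from scale $\sqrt{t}$, combined with direct time integration, bridges the gap. A secondary essential input is Wang's shrinker uniqueness, without which no common shrinking reference surface would be available for the interior comparison.
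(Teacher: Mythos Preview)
Your proposal is correct and follows essentially the same approach as the paper: identify the shrinker models via Wang's uniqueness (this is Lemma~\ref{same_shrink}), use the common shrinker picture for the core region, integrate the improved Type~I bound $|A|\leq\varepsilon/\sqrt{t}$ from Theorem~\ref{AC_sing_struc}(2) along worldlines outside the core, and conclude graphicality from the normal injectivity radius bound of Theorem~\ref{AC_sing_struc}(1).

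The only organizational difference is that the paper traces worldlines \emph{forward} from a common starting point $x_0\in M^1_0=M^2_0$, defining $t_1$ as the first entry time of $x^1(t)$ into $B(0,20c\sqrt{t})$; this makes it automatic that the worldlines of both flows remain in the region where the improved curvature bound applies (since they begin together and the displacement estimate itself keeps $|x^2(t)|\geq R\sqrt{t}$). Your three-region decomposition based on the position of $x\in M^1_t$ at time $t$ requires a small bootstrap to check that the backward worldlines stay outside $B(0,R\sqrt{s})$ throughout $[0,t]$, but this is routine. Your additional split into an exterior smooth region $\{|x|\geq\varepsilon\}$ (giving the stronger rate $O(t)$) is not needed for the argument but is harmless.
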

\begin{proof}
By \cite{Wang_unique} (see Lemma \ref{same_shrink} below), the singularity of both $M^1_t$ and $M^2_t$ are modelled on the same AC shrinker $\Sigma$. Let $\varepsilon$ and let $T_0, C,c,R$ and $r$ be such that the conclusions of Theorem \ref{AC_sing_struc} holds for both of the  $M^i_t$. Assume without loss of generality that $\sqrt{n}R<c$  and that, as in the above proof,  for every $t\in [0,T_0]$ the hypersurfaces $M^i_{t}/\sqrt{t}$ can be written as a normal graph of a smooth function $v^i:B(0,100c)\cap \Sigma \rightarrow \mathbb{R}$ with $\mathrm{C}^{10}$ norm bounded by $\varepsilon^2$. This, together with \eqref{A_inj_bd_cone} shows the existence of the asserted $u$ in $B(0,50c\sqrt{t})$, with bounds
\begin{equation}\label{u_ins_shr}
|u(x,t)| \leq  \varepsilon\sqrt{t}, \qquad |\nabla u(x,t)|\leq \varepsilon,\qquad x\in B(0,50c\sqrt{t})
\end{equation}
Given any $x_0\in M^1_0=M^2_0$ with $x_0\neq 0$, denote by $(x^i(t))_{t\in [0,T_0]}$ its worldline under the BMCF $M^i_t$ (that is, the integral curve for the ODE $\sd{t} x(t)= H_{M^i_t}(x)\nu_{M^i_t}(x), x(0)=x_0$). Setting 
\[
t_1=\min\{\inf\{t\in [0,T_0]\;:\;x^1(t)\in B(0,20c\sqrt{t})\},T_0\}
\]
we conclude that $t_1>0$. Integrating \eqref{A_inj_bd_away} and noting that  $\sqrt{n}R<c$, we see that $|x^2(t)| \geq R\sqrt{t}$ for every $t\in [0,t_1]$. Moreover,  we see that for every $t\in [0,t_1]$
\[
|x^1(t)-x^2(t)|\leq 2\sqrt{n}\varepsilon\sqrt{t}.
\]
Combined with \eqref{u_ins_shr} this implies that for every $t\in [0,T_0]$
\begin{equation}
d_{\mathrm{Hauss}}(M^1_t,M^2_t) \leq 2\sqrt{n}\varepsilon\sqrt{t}.
\end{equation}
Together with the normal injectivity radius bound of \eqref{A_inj_bd_cone} (for both $i=1,2$), this implies that $M^2_t$ is a normal graph over $M^1_t$ of a function $u$ satisfying
\begin{equation}
|u(x,t)|  \leq 2\sqrt{n}\varepsilon \sqrt{t}.
\end{equation}
The $C^1$ bound follows {from} Theorem \ref{AC_sing_struc} in $B(0,r)$, and from the flows being smooth elsewhere.
\end{proof}

We have used the following Lemma, which follow easily from \cite{Wang_unique}

\begin{lemma}\label{same_shrink}
Suppose $(M^i_t)_{t\in [0,T]}$ for $i=1,2$ are two backwards mean curvature flows in $\mathbb{R}^{n+1}$ encountering an AC {singularity} at $(0,0)$, such that $M^i_t/\sqrt{t}\rightarrow \Sigma_i$. If  $M^1_0=M^2_0$. Then $\Sigma_1=\Sigma_2$.
\end{lemma}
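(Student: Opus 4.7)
The plan is to show that the asymptotically conical self-shrinkers $\Sigma_1$ and $\Sigma_2$ share the same tangent cone at infinity, and then to invoke Wang's rigidity theorem \cite{Wang_unique} for asymptotically conical self-shrinkers, which asserts that two self-shrinkers with the same asymptotic cone must coincide.

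First I would observe that, by Remark \ref{con_neigh}, each flow extends to a smooth limit hypersurface $M^i_0$ on $\mathbb{R}^{n+1} - \{0\}$, and the hypothesis $M^1_0 = M^2_0$ is then an identity of smooth hypersurfaces on a punctured neighborhood of the origin. In particular, they possess an identical tangent cone $C$ at the origin, and the task reduces to identifying $C$ with the asymptotic cones $\Lambda_i$ of $\Sigma_i$ appearing in Definition \ref{AC_def_back}.

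The key claim is therefore: the tangent cone of $M^i_0$ at the origin equals $\Lambda_i$. To prove this, I would perform a parabolic rescaling. For $s>0$ let $\widetilde{M}^i_{t,s} := s^{-1} M^i_{s^2 t}$; this is again a BMCF. The definition of an AC singularity gives $\widetilde{M}^i_{t,s} \to \sqrt{t}\, \Sigma_i$ in $C^\infty_{\mathrm{loc}}(\mathbb{R}^{n+1} - \{0\})$ as $s \to 0$ for each fixed $t>0$. To access the $t=0$ slice, I would fix a radius $\mu > 0$ and a point $\mu y$ with $y \in \Lambda_i$. Choosing $\lambda_s := \sqrt{s^2 t}/\mu \to 0$ and using that $\lambda \Sigma_i \to \Lambda_i$ in $C^\infty_{\mathrm{loc}}(\mathbb{R}^{n+1} - \{0\})$ as $\lambda \to 0$, one can compose the two convergences to conclude
\[
s^{-1} M^i_0 \longrightarrow \Lambda_i \quad \text{in } C^\infty_{\mathrm{loc}}(\mathbb{R}^{n+1} - \{0\}) \text{ as } s \to 0.
\]
Concretely, one follows points of $M^i_t$ near the radius $\mu$ as $t \to 0$, using the uniform smooth convergence of the rescaled flows on annular regions to pass to the $t=0$ limit; the conical structure of $\Sigma_i$ at infinity then produces the cone $\Lambda_i$ at the origin of $M^i_0$. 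Hence the tangent cone of $M^i_0$ at $0$ is precisely $\Lambda_i$.

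Combining the steps, $M^1_0 = M^2_0$ forces $\Lambda_1 = \Lambda_2 =: \Lambda$, so both $\Sigma_1$ and $\Sigma_2$ are self-shrinkers asymptotic to the same smooth cone $\Lambda$. Wang's uniqueness theorem \cite{Wang_unique} then yields $\Sigma_1 = \Sigma_2$. The main obstacle, modulo quoting Wang's theorem, is the justification of the limit interchange identifying the tangent cone of $M^i_0$ at the origin with $\Lambda_i$: one must carefully pair the blow-up of $M^i_0$ at $0$ with the blow-down of $\Sigma_i$ at infinity through the parabolic rescaling. Away from the spacetime origin this is a routine consequence of the smooth convergences provided by the AC hypothesis, so the argument reduces cleanly to citing Wang's rigidity result.
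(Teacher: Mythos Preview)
Your proposal is correct and follows essentially the same route as the paper: identify the asymptotic cone of $\Sigma_i$ with the tangent cone of $M^i_0$ at the origin, use $M^1_0=M^2_0$ to equate the cones, and then invoke Wang's rigidity theorem \cite{Wang_unique}. The only difference is that the paper does not redo the cone identification via parabolic rescaling; it simply cites Theorem \ref{AC_sing_struc}\,(4) (the graphicality of $M_t$ over $\sqrt{t}\,\Sigma$ in a ball, from \cite{CS}), from which $\lim_{\lambda\to 0}\lambda\Sigma_i=\lim_{\lambda\to\infty}\lambda M^i_0$ follows immediately. Your direct sketch of this step is fine in spirit, though the diagonal/limit-interchange you flag is exactly what the structure theorem packages cleanly; citing it makes the proof a two-line affair.
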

\begin{proof}
By Theorem \ref{AC_sing_struc} (4) (which is again from \cite{CS})
\begin{equation}
C_i=\lim_{\lambda \rightarrow 0 } \lambda {\Sigma_i}=\lim_{\lambda \rightarrow \infty} \lambda M^i_0.
\end{equation}
Thus $C_1=C_2$, which by \cite{Wang_unique} implies that $\Sigma_1=\Sigma_2$.  
\end{proof}

{W}e record the following evolution equations under BMCF.
\begin{lemma}[\cite{Huisken}]\label{eqnofmotion}
	Given a  BMCF, $(M_t)_{t\in[0,T]}$, the following evolution equations hold
    \begin{align*}
        \left(\pd{}{t}+\Delta_{M_t}\right)H&=-\norm{A}^2 H,\\
        \Delta_{M_t} \mathbf{n}&=\nabla H -|A|^2 \mathbf{n},\\
        \Delta_{M_t}\ip{x}{\mathbf{n}},
        &=H+\ip{x}{\nabla H}-|A|^2\ip{x}{\mathbf{n}},\\
        \sd{t} \ip{x}{\mathbf{n}}&=H-\ip{x}{\nabla H}.
    \end{align*}
\end{lemma}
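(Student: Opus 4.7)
The plan is to derive all four identities directly from the BMCF equation $\partial_t x = H\mathbf{n}$ (equivalently $(\partial_t x)^\perp = -\vec H$), following Huisken's original computations for the forward flow with the sign changes dictated by the reversed time direction.

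I would first establish the two time-independent identities. For $\Delta \mathbf{n}$, write $\partial_j \mathbf{n} = h_j^k \partial_k x$, differentiate once more, and split into tangential and normal components using the Gauss formula $\partial_i \partial_k x = \Gamma^l_{ik}\partial_l x - h_{ik}\mathbf{n}$. Tracing against $g^{ij}$ and invoking the Codazzi-trace identity $g^{ij}\nabla_i h_j^k = \nabla^k H$ gives $\Delta \mathbf{n} = \nabla H - |A|^2 \mathbf{n}$. For the Laplacian of $\langle x, \mathbf{n}\rangle$, I would apply the product rule
\[
\Delta \langle x, \mathbf{n}\rangle = \langle \Delta x, \mathbf{n}\rangle + 2 g^{ij}\langle \partial_i x, \partial_j \mathbf{n}\rangle + \langle x, \Delta \mathbf{n}\rangle,
\]
and substitute $\Delta x = -H\mathbf{n}$ (the Laplacian of the position vector is the mean curvature vector), $g^{ij}\langle \partial_i x, \partial_j \mathbf{n}\rangle = g^{ij} h_{ij} = H$, together with the formula for $\Delta \mathbf{n}$ just derived. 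This yields $\Delta \langle x, \mathbf{n}\rangle = -H + 2H + \langle x, \nabla H\rangle - |A|^2 \langle x, \mathbf{n}\rangle$, matching the claim.

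For the time-dependent identities, I would first record $\partial_t g_{ij} = 2 H h_{ij}$, obtained by differentiating $g_{ij} = \langle \partial_i x, \partial_j x\rangle$ and using $\partial_t x = H\mathbf{n}$ with $\langle \mathbf{n}, \partial_j x\rangle = 0$. Differentiating the orthogonality $\langle \mathbf{n}, \partial_j x\rangle = 0$ in $t$, and noting that $\partial_t \mathbf{n}$ is tangential (from $|\mathbf{n}|^2 = 1$), identifies $\partial_t \mathbf{n} = -\nabla H$. Then
\[
\frac{d}{dt}\langle x, \mathbf{n}\rangle = \langle H\mathbf{n}, \mathbf{n}\rangle + \langle x, -\nabla H\rangle = H - \langle x, \nabla H\rangle,
\]
which is the fourth equation. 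For the evolution of $H$, I would differentiate $H = g^{ij} h_{ij}$, compute $\partial_t h_{ij}$ from $h_{ij} = -\langle \partial_i \partial_j x, \mathbf{n}\rangle$, and combine with the traced Simons identity $\Delta h_{ij} = \nabla_i \nabla_j H + H h_{ik}h^k_j - |A|^2 h_{ij}$ to reach $(\partial_t + \Delta)H = -|A|^2 H$. Equivalently, one may reuse Huisken's forward-flow derivation verbatim and flip the signs of all terms arising from the time derivative.

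I do not anticipate any serious obstacle here: the four formulas are routine Huisken-type computations, available in his original paper. The only care required is sign bookkeeping, since the BMCF convention $\partial_t x = H\mathbf{n}$ reverses the signs of $\partial_t g_{ij}$, $\partial_t \mathbf{n}$, $\partial_t H$, and $\frac{d}{dt}\langle x, \mathbf{n}\rangle$ relative to the forward flow, producing the specific signs in the statement.
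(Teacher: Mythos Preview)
Your proposal is correct and is exactly the standard Huisken computation that the paper cites without proof; the paper simply records these formulas from \cite{Huisken} and does not give its own derivation, so your sketch reproduces precisely what one finds in that reference, with the correct sign bookkeeping for the backward flow.
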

\subsection{Validity of the AC singularity assumption}

The {assumption} of the main Theorem of this paper, Theorem \ref{main_thm}, requires, in addition to $M^1_0=M^2_0$, that both $M^1_t$ and $M^2_t$  have an AC singularity, as in Definition \ref{AC_def_back} at $(0,0)$. This section serves to show that this assumption is {natural} in some context: there are situations in which such singularities are expected, and moreover, in {these} situations, $M^2_t$ has an AC singularity if and only if $M^1_t$ has an AC singularity. That is to say, the AC singularity {may} be taken as an assumption for only one of the flows.  The result in this section will not be used elsewhere in this paper.

We start by introducing some standard terminology: Denote by $\mathcal{M}^i$ the space-time track of $M^i_t$, namely
\begin{equation}
\mathcal{M}^i=\{(x,t)\;|x\in M_t\}.
\end{equation}
We will also conflate $\mathcal{M}^i$ with the associated multiplicity one (backwards) Brakke flow (see \cites{Brakke,Ilm_elip}). For every $\lambda>0$ let $\mathcal{D}_{\lambda}\mathcal{M}^i$ be the (backwards) parabolic {dilation} of $\mathcal{M}^i$, namely
\[
\mathcal{D}_{\lambda} \mathcal{M}^i=\{(\lambda x, \lambda^2 t)\;|\;(x,t)\in \mathcal{M}^i\}.
\]
It follows from Huisken's monotonicity formula \cites{Huisken_mono, Ilm_surfaces} that any sequence $\lambda_i\rightarrow \infty$ has a sub-sequence {(reindexed)} along which
\begin{equation}\label{tang_flow}
\mathcal{D}_{\lambda_i}\mathcal{M}^1\rightarrow \tilde{N},
\end{equation}
where $\mathcal{N}$ is a self shrinking (backwards) Brakke flow  whose $t=1$ time-slice is {a} varifold $\Sigma$ satisfying \eqref{srink_eq} in the varifold sense. Any $\mathcal{N}$ obtained as a subsequential limit as in \eqref{tang_flow}, which is taken in {the sense} of (backwards) Brakke flows, is called a \textbf{tangent flow} to $M^1_t$ at $(0,0)$.

The self-similarity of $\mathcal{N}$ also implies \cite{Ilm_lec} (see also \cite{Wang_asym}) that the set of points $x\in \mathbb{R}^{n+1}$ which the flow $\mathcal{N}$ reaches at time $0$ forms a cone $C$ in $\mathbb{R}^{n+1}$, in the sense that
\begin{equation}
x\in C \Rightarrow \lambda x\in C\;\textrm{for every } \lambda>0. 
\end{equation}

\bigskip

Let $\mathcal{N}$ be a tangent flow to $M^1_t$ at $(0,0)$. We make the following two assumptions, which we {justify soon after}:

\begin{assumption}\label{nat_ass1}
The time $t=1$ of $\mathcal{N}$,  $\Sigma$, is smooth, and the convergence in \eqref{tang_flow} is with multiplicity one.
\end{assumption}

\begin{assumption}\label{nat_ass2}
The cone $C$ of points that are reached by $\mathcal{N}$ at time $0$ is a smooth hypersurface away from $0$, and 
\[
\lambda \Sigma \rightarrow C \; \textrm{smoothly in $\mathbb{R}^{n+1}-\{0\}$ as $\lambda\rightarrow 0$}
\]
\end{assumption}

Assumption \ref{nat_ass1} is essentially the only case of singularities of MCF which one can currently  handle in general\footnote{The smoothness of $\Sigma$ can be relaxed in some cases to the assumption that the singular set of $\Sigma$ has $(n-1)$-dimensional Hausdorff measure zero}. By \cite{Bamler_Kleiner}, Assumption \ref{nat_ass1} holds true when $n=2$. 
In this very case it's a theorem of Wang \cite{Wang_asym} that Assumption \ref{nat_ass2} is valid for any singularity for BMCF in $\mathbb{R}^3$, unless $\mathcal{N}$ has a cylindrical end, which conjecturally can only happen if $\Sigma$ itself is, up to rotation, the cylinder $\mathbb{S}^1(\sqrt{2})\times \mathbb{R}$ \cite{Ilmanen_problems}. Thus, when $n=2$ presumably either \eqref{nat_ass2} holds or the singularity at $(0,0)$ is a neck singularity. For higher dimensions \eqref{nat_ass2} is still natural and have been the subject of much study. For instance, if $M^1_t$ has entropy\footnote{
Recall that the Gaussian entropy of a {hypersurface} $N\subseteq \mathbb{R}^{n+1}$ and of the flow $M^i_t$, defined as
\begin{equation}
\mathrm{Ent}[N]=\sup_{\substack{x_0 \in \mathbb{R}^{n+1} \\ \lambda>0}}\frac{1}{(4\pi)^{n/2}}\int_{\lambda(M-x_0)}\exp\left(-\frac{|x|^2}{4}\right), \qquad \mathrm{Ent}[\mathcal{M}^i]=\mathrm{Ent}[M^i_T]
\end{equation}} less than that of the neck cylinder, then both Assumption  \ref{nat_ass1} and Assumption\ref{nat_ass2} hold \cite{BW_cone}.

\bigskip 

Given Assumptions \ref{nat_ass1} and \ref{nat_ass2},  a theorem of Chodosh and Schulze \cite{CS} states that $\mathcal{N}$ is the unique tangent flow to $\mathcal{M}^1$ at $(0,0)$. In particular, this implies that $M^1_t$ encounters an AC singularity at $(0,0)$, according to Definition \ref{AC_def_back}

\bigskip 

The following lemma  serves to show that the assumption that $M^2_t$ experiences an AC singularity follows from $M^1_t$ encountering an AC singularity, in the cases where an AC singularity could be expected to begin with.
\begin{lemma}\label{AC_from_other}
Suppose $(M^1_t)_{t\in [0,T]}$ and $(M^2_t)_{t\in [0,T]}$ are two backwards mean curvature flow in $\mathbb{R}^{n+1}$, smooth outside of $(0,0)$,
such that $M^1_0=M^2_0$ and such that $M^1_t$ has an asymptotically conical singularity at $(0,0)$, modelled on a {self-shrinker} $\Sigma$.  Assume either 
\begin{enumerate}
\item $n=2$ or,
\item $n=3$ and the entropy of $M^2_t$ is at most the entropy of $\mathbb{S}^2\times \mathbb{R}$.
\end{enumerate}
Then $M^2_t$ has an asymptotically conical singularity at $(0,0)$.
\end{lemma}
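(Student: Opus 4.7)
The plan is to verify that Assumptions \ref{nat_ass1} and \ref{nat_ass2} hold for $M^2_t$ at $(0,0)$, so that the uniqueness of tangent flows result of Chodosh--Schulze \cite{CS} delivers the AC singularity conclusion for $M^2_t$. Let $\mathcal{N}^2$ be any subsequential tangent flow to $M^2_t$ at $(0,0)$ extracted via Huisken's monotonicity, with self-shrinking time-$(-1)$ slice $\Sigma^2$, and let $C^2$ denote the cone of points reached by $\mathcal{N}^2$ at time $0$.

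The driving observation is that $C^2$ must coincide with $C$, the asymptotic cone of the shrinker $\Sigma$ modelling the singularity of $M^1_t$. Applying Theorem \ref{AC_sing_struc}(4) to $M^1_t$ gives $\lim_{\lambda\to\infty}\lambda M^1_0 = C$; since $M^1_0=M^2_0$, the same holds for $M^2_0$. On the other hand, the time-$0$ slice of the dilated flow $\mathcal{D}_{\lambda_i}\mathcal{M}^2$ is exactly $\lambda_i M^2_0$, so under the Brakke convergence $\mathcal{D}_{\lambda_i}\mathcal{M}^2\to\mathcal{N}^2$ one identifies $\lim_i \lambda_i M^2_0 = C^2$. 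Hence $C^2=C$, and by self-similarity of $\mathcal{N}^2$ this cone equals the asymptotic cone $\lim_{\lambda\to 0}\lambda\Sigma^2$ of $\Sigma^2$. In particular the asymptotic cone of $\Sigma^2$ is the smooth cone $C$.

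With this identification in hand I check the two cases. For $n=2$, the Bamler--Kleiner theorem \cite{Bamler_Kleiner} gives that $\Sigma^2$ is smooth and that the convergence is with multiplicity one, verifying Assumption \ref{nat_ass1}. Wang's asymptotic classification \cite{Wang_asym} then dichotomises $\Sigma^2$: either it is smoothly asymptotically conical (in which case Assumption \ref{nat_ass2} holds directly), or it possesses a cylindrical end. A cylindrical end would force $C^2$ to contain a ray along the axis of that cylinder, contradicting the smoothness of $C=C^2$ as a two-dimensional cone away from the origin. This rules out the cylindrical alternative and establishes Assumption \ref{nat_ass2}. For $n=3$ under the entropy hypothesis $\mathrm{Ent}[\mathcal{M}^2]\leq \mathrm{Ent}[\mathbb{S}^2]$, the theorem of Bernstein--Wang \cite{BW_cone} delivers smoothness, multiplicity one, and smooth asymptotic conicality of $\Sigma^2$ simultaneously, giving both Assumptions \ref{nat_ass1} and \ref{nat_ass2}.

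With both assumptions verified, \cite{CS} upgrades the subsequential tangent flow $\mathcal{N}^2$ to \emph{the} unique tangent flow of $M^2_t$ at $(0,0)$, whence $M^2_t$ has an AC singularity at $(0,0)$ in the sense of Definition \ref{AC_def_back}. The main technical obstacle is the commutation of blowdown with the tangent-flow limit used to identify $C^2=C$: one must justify that the weak Brakke-flow convergence $\mathcal{D}_{\lambda_i}\mathcal{M}^2\to \mathcal{N}^2$ restricts sensibly to the time-$0$ slice and yields the blowdown of $M^2_0$, which will require combining the interior regularity/compactness of Brakke flows with the structural description of $C^2$ from \cite{Ilm_lec, Wang_asym}.
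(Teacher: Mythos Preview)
Your proposal is correct and follows essentially the same approach as the paper's proof: extract a tangent flow $\mathcal{N}^2$, identify its time-$0$ cone with the asymptotic cone $C$ of $\Sigma$ via the shared initial surface, and then invoke \cite{Bamler_Kleiner}+\cite{Wang_asym} (for $n=2$) or \cite{BW_cone} (for $n=3$) to rule out non-AC alternatives before applying \cite{CS}. Two small remarks: in the $n=3$ case, \cite{BW_cone} actually yields that $\Sigma^2$ is either asymptotically conical \emph{or compact}, and the paper rules out the compact case by observing $M^2_0=M^1_0$ is not a point---your front-loaded identification $C^2=C$ (a nontrivial smooth cone) accomplishes the same thing, but you should make that deduction explicit rather than asserting \cite{BW_cone} gives AC directly. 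Your caution about justifying that Brakke convergence $\mathcal{D}_{\lambda_i}\mathcal{M}^2\to\mathcal{N}^2$ passes to the time-$0$ slice is well placed; the paper simply asserts $N_0=\lim_j\lambda_jM^2_0$ without further comment, so you are being more careful there than the original.
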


\begin{proof}
Let $\mathcal{N}^2=\{N_t\}_{t\in [0,\infty)}$ be a tangent flow to $M^2_t$, obtained from the sequence of dilations $\mathcal{D}\lambda_j\mathcal{M}^2$ with $\lambda_j\rightarrow \infty$. Denote $\Sigma^2=N_{1}$.   

When $n=3$, it follows from the entropy assumption and \cite{BW_cone} that $\Sigma^2$ is smooth, multiplicity one, and either asymptotically conical or compact. Since $M^2_0=M^1_0$ doesn't consist of a single point, {it} follows that $\Sigma^2$ is asymptotically conical. By \cite{CS} $M^2_t$ has an AC singularity at $(0,0)$, in accordance with Definition \ref{AC_def_back}.

When $n=2$, by \cites{Bamler_Kleiner,Ilm_surfaces} $\Sigma^2$ is smooth and the convergence is multiplicity one. By \cite{Wang_asym}, if $\Sigma^2$ is not AC, then it has some cylindrical end, but then $N_0$ must contain a ray of singularities. This is impossible, as 
\[
N_0=\lim_{j\rightarrow \infty}M^2_0=\lim_{j\rightarrow \infty}M^1_0,    
\]
and the {right-hand} side is a smooth cone away from $0$.
\end{proof}

\section{A conditional backwards uniqueness theorem}

In this section, we will prove Theorem \ref{thm_cond_back} (Conditional backwards uniqueness), which gives an upper bound of convergence rates of two singular BMCFs to one another.

\bigskip

Theorem \ref{thm_cond_back} is a corollary of the following more general (conditional) backwards uniqueness for solution to backwards heat equation along a BMCF with an AC singularity. Given a BMCF $M_t$ on $[0,T]$, smooth outside of $(0,0)$, denote by  $L$ be the backwards heat operator
\begin{equation}
L=\partial_t+\Delta_{g(t)}.
\end{equation} 

\begin{theorem}[Backwards uniqueness for quasilinear heat-type equations assuming fast convergence]\label{main_technical_uniqueness_thm}
Let $(M_t)_{t\in [0,T]}$ be a BMCF in $\mathbb{R}^{n+1}$, which is smooth outside of $(0,0)$ with an AC singularity at $(0,0)$.  Suppose that  $u:[0,T]\times M\rightarrow \mathbb{R}$ is a solution to 
\begin{equation}
Lu=G(x, u,\nabla u)
\end{equation}
where $G$ satisfies the pointwise estimate
\begin{equation}\label{G_est}
|G(x,u,\nabla u)|^2 \leq \frac{Ku^2}{t^2}+\frac{o(1)}{t}|\nabla u|^2,
\end{equation}
where $K<\infty$. Suppose further that there exists some $\beta_1>0$ such that $u$ satisfies
 \begin{equation}\label{u_asymptotics_main}
\left|u\exp\left(\frac{1}{t^{\beta_1}}\right)\right|+\left|\nabla u\exp\left(\frac{1}{t^{\beta_1}}\right)\right| \leq D<\infty.
\end{equation}
Then $u\equiv 0$. 
\end{theorem}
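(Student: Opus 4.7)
The plan is to establish the global weighted Carleman inequality \eqref{key_est}, apply it to a suitable time-cutoff of $u$, and exploit the exponential decay \eqref{u_asymptotics_main} together with the pointwise control \eqref{G_est} on the nonlinearity to force $u \equiv 0$ on some short initial interval $[0,T_0]$; standard smooth backwards uniqueness (cf.\ \cite{BU}) for the resulting linear parabolic equation on $[T_0,T]$ will then propagate this to all of $[0,T]$. Under \eqref{u_asymptotics_main} with $\beta<\beta_1$, the quantity $u^2 \exp(\alpha/t^{\beta})$ stays bounded as $t \to 0$ for any $\alpha$, so the weighted norms $\|t^{-1-\beta/2}u\|_{\mathcal{G}_{\alpha,\beta}}^2$ and $\|t^{-1/2}\nabla u\|_{\mathcal{G}_{\alpha,\beta},1}^2$ are finite and the Carleman inequality may legitimately be applied. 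Once applied, the nonlinearity bound $|G|^2 \le K u^2/t^2 + o(1)|\nabla u|^2/t$ dominates the left-hand side by $K\|t^{-1}u\|_\mathcal{G}^2 + o(1)\|t^{-1/2}\nabla u\|_\mathcal{G}^2$; since $Kt^{-2} \ll t^{-2-\beta}$ as $t \to 0$ and the $o(1)$ factor can be made smaller than $\varepsilon/2$ on $[0,T_0]$ after choosing $T_0$ small, both terms are absorbed by the right-hand side of \eqref{key_est}, forcing $u \equiv 0$ on $[0,T_0]$.

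The derivation of \eqref{key_est} itself proceeds in two independent halves. First, conjugating $L$ by $\mathcal{G}_{\alpha,\beta}^{1/2}$ and performing the standard splitting of \cite{LP} into symmetric and skew-symmetric parts, the commutator together with the time-derivative of the weight produces the bulk positive terms appearing on the right of \eqref{key_est}. Curvature errors arising from the time-evolving metric and volume form on $M_t$, together with the $t^{-1/2}$ blow-up of $|A|$ from \eqref{A_inj_bd_cone}, are however not absorbable in a scale-invariant way on the shrinking core $I^r_t = \{|x| \le r\sqrt{t}\}$. Outside $I^r_t$, the conicality estimates \eqref{A_inj_bd_away} and \eqref{A_scales_like_cone} of Theorem \ref{AC_sing_struc} let one absorb the errors once $r$ is chosen sufficiently large, yielding the intermediate inequality \eqref{key_est_minus} of Theorem \ref{smooth_global_Carleman_neg_term}: exactly \eqref{key_est} but with an additional negative localized remainder of the form $-C\|\cdot\|^2_{I^r_t,\mathcal{G}}$ supported on the core.

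To eliminate this remainder I would invoke a second, Gaussian-weighted Carleman estimate on the parabolically rescaled flow $N_\tau = M_{e^\tau}/e^{\tau/2}$, which by Theorem \ref{AC_sing_struc}(4) is graphical over a fixed piece of the self-shrinker $\Sigma$ on large balls. On the rescaled flow, the natural weight is the shrinker's Gaussian $\Phi(x,t) = \exp(-|x|^2/4)$ paired with $\mathcal{G}_{\alpha,\beta}$; performing the \cite{LP} decomposition against this combined weight and unraveling the rescaling produces \eqref{Car_res_cut}, applied to the cutoff $\bar w$ of $w$ between $I^R_t$ and $I^{R+1}_t$, with the annular commutator errors carrying an overall Gaussian factor of order $\exp(-R^2/4)$. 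Adding a sufficiently large multiple of \eqref{Car_res_cut} to \eqref{key_est_minus} then eliminates the negative core remainder: the positive terms from \eqref{Car_res_cut} dominate the $C\|\cdot\|^2_{I^r_t,\mathcal{G}}$ contributions because $\Phi \ge e^{-r^2/4}$ uniformly on $I^r_t \subset I^R_t$, while the $\exp(-R^2/4)$ annular cutoff errors are swallowed by the global positive terms of \eqref{key_est_minus} for $R \gg r$. The main obstacle is precisely this bookkeeping --- arranging the weights so that the rescaled Gaussian Carleman meshes with the unrescaled smooth Carleman, and fixing the order of constants ($r$ large enough for smooth absorption, then $R \gg r$ for Gaussian absorption of the cutoff errors, and finally $\alpha$ large enough to render \eqref{key_est} valid and to subsequently beat the nonlinearity constant $K$ in the final application).
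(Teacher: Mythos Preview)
Your outline of the Carleman estimate \eqref{key_est} and its derivation is essentially the paper's: the exterior Lees--Protter-type estimate with errors localized on $I^r_t$, the interior Gaussian Carleman on the rescaled flow, and the gluing with $R\gg r$ are all identified correctly, including the order in which $r$, $R$ and $\alpha$ are chosen.

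The gap is in the final application. Absorbing the nonlinearity on the region where the temporal cutoff $\varphi$ equals $1$ does \emph{not} by itself force $u\equiv 0$. Concretely, with $\varphi$ equal to $1$ on $[0,t_2]$ and $0$ on $[t_1,T]$ and $v=\varphi u$, applying \eqref{key_est} and absorbing $\|Lu\|^2_{\Omega_2,\mathcal G}$ via \eqref{G_est} (which works exactly for the reason you give) leaves you only with
\[
\|Lv\|^2_{\Omega_1,\mathcal G}\ \ge\ \tfrac12\,\|t^{-1-\beta/2}u\|^2_{\Omega_2,\mathcal G},
\]
where $\Omega_1=(M_t)_{t\in[t_2,t_1]}$. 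The left-hand side is a fixed finite number coming from $\varphi' u$ and the flow on a compact, smooth slab; the inequality is merely a bound, not a vanishing statement. Your sentence ``forcing $u\equiv 0$ on $[0,T_0]$'' does not follow, and ``$\alpha$ large enough to beat $K$'' is not the relevant mechanism---the absorption of $K$ already happened via $T_0$ small, since the coefficient on the $u^2$-term in \eqref{key_est} is $1$, not $\alpha$.

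The missing step is the standard weight-separation argument: restrict the right-hand side further to $\Omega_3=(M_t)_{t\in[0,t_3]}$ with $t_3<t_2$, strip off the weights using
\[
q:=\frac{\sup_{t\in[t_2,t_1]} e^{1/t^\beta}}{\inf_{t\in[0,t_3]} e^{1/t^\beta}}<1,
\]
obtain $\|u\|^2_{\Omega_3}\le C\,q^{\alpha}\,\|Lv\|^2_{\Omega_1}$, and only then send $\alpha\to\infty$. This is what the $\alpha$-dependence of $\mathcal G_{\alpha,\beta}$ is for; without it, the cutoff contribution on $\Omega_1$ never disappears.
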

 
The proof of Theorem \ref{main_technical_uniqueness_thm} relies on two Carleman estimates inspired by the classical one of Lees and Protter \cite{LP}, which was used  to prove a theorem similar to Theorem  \ref{main_technical_uniqueness_thm} for uniformly parabolic operators on a domain, where $G$ satisfies the bounds 
\[
|G(u)|^2 \leq K(|u|^2+|\nabla u|^2).
\] 
On a technical note, dealing with the Type I singularity, as well as with the non-uniform parabolicity at the singular time necessitates using a different, more singular weights, and also a growth assumption (to be validated in the subsequent section of this paper). 

More importantly, in order for any $\beta_1>0$ to suffice in \eqref{u_asymptotics_main}, one needs to 
deal with the regions away from and within the core shrinker scale differently, and patch these Carleman estimates together. This requires utilizing the smooth behaviour and shrinking behaviour - not only on the conical part of the {self-shrinker}. Doing so is probably one of the main technical novelties of this paper.  
\bigskip

To set up the Carleman estimate, given parameters $\alpha,\beta>0$, consider the weight
\[
\mathcal{G}=\mathcal{G}_{\alpha,\beta}:=e^{\frac{\alpha}{t^\beta}}.
\]
and consider the weighted $L^2$ and $W^{1,2}$ space of functions $u:M_t\rightarrow \mathbb{R}$, whose (pseudo) norms are given by
\begin{equation}
\|u\|^2_{\mathcal{G}}=\int_{0}^T\int_{M_t}u^2\mathcal{G},\qquad \|u\|^2_{\mathcal{G},1}=\int_{0}^T\int_{M_t}|\nabla u|^2\mathcal{G}.
\end{equation}

The main technical estimate of this section is the following Carleman type estimate: 

\begin{theorem}[Smooth global Carleman]\label{smooth_global_Carleman}
Let $(M_t)_{t\in [0,T_1]}$ be a BMCF in $\mathbb{R}^{n+1}$ which is smooth outside of $(0,0)$ with an AC singularity at $(0,0)$. Given $\beta_1>0$ and $
\beta\in (0,\beta_1)$ there exist $T_0\in [0,T_1]$, $\varepsilon>0$  and $\alpha_0<\infty$ with the following significance:

Suppose $T\leq T_0$ and $u:(M_t)_{t\in [0,T]}\rightarrow \mathbb{R}$ is a smooth function such that $u(\cdot,T)=0$ and $u(\cdot ,0)=0$, and suppose that 
\begin{equation}\label{u_asymptotics}
\left|u\exp\left(\frac{1}{t^{\beta_1}}\right)\right|+\left|\nabla u\exp\left(\frac{1}{t^{\beta_1}}\right)\right| \leq D<\infty.
\end{equation}
Then for  $\alpha\geq \alpha_0$
\[
\|Lu\|^2_{\mathcal{G}} \geq  \|t^{-1-\beta/2}u\|^2_{\mathcal{G}}+\varepsilon \|t^{-1/2}u\|^2_{\mathcal{G},1}.
\]
\end{theorem}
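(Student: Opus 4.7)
The plan is to establish the estimate by combining two complementary Carleman inequalities tailored to the two distinct geometric regimes of $M_t$ near an asymptotically conical singularity, and then patching them via a careful parameter ordering. Specifically, the proof will combine (i) a Lees--Protter type estimate with the singular weight $\mathcal{G} = \mathcal{G}_{\alpha,\beta}$ valid everywhere up to a negative correction supported on the core shrinker scale $I^r_t = \{|x| \leq r\sqrt{t}\}$ (this is Theorem \ref{smooth_global_Carleman_neg_term}, stated as \eqref{key_est_minus}), with (ii) a Gaussian-weighted core estimate on a suitable cutoff $\bar u$ supported in $I^{R+1}_t$ (this is Theorem \ref{Core_Car}, stated as \eqref{Car_res_cut}).

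For (i) the strategy is standard in principle: multiply $Lu$ by $\mathcal{G} u$ and by appropriate commutator variants, then integrate by parts over the space-time track of the flow. The principal positive contributions come from $\partial_t \mathcal{G} = -\alpha\beta\, t^{-\beta-1}\mathcal{G}$, which produces $\|t^{-1-\beta/2}u\|^2_{\mathcal{G}}$ once $\alpha$ is large, and from the commutator $[L, \log\mathcal{G}]$, which produces the gradient term $\|t^{-1/2}u\|^2_{\mathcal{G},1}$. The error terms come from the evolution of the induced metric and volume element under BMCF, and carry factors of $|A|^2$; by Theorem \ref{AC_sing_struc}(1) one has $|A|^2 \le C/t$, which is exactly critical against the weight, but by \eqref{A_inj_bd_away} one has $|A|^2 \le \varepsilon^2/t$ outside $I^r_t$ for $r$ large, making absorption possible there. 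Inside $I^r_t$ these errors must be retained, yielding the negative terms in \eqref{key_est_minus}. The growth hypothesis \eqref{u_asymptotics} with $\beta_1 > \beta$ is exactly what forces $u^2 \mathcal{G}$ and $|\nabla u|^2\mathcal{G}$ to vanish as $t\to 0$, making the integration-by-parts boundary term at the singular time legitimately zero. For (ii) one applies the standard Gaussian Carleman estimate on the rescaled flow $N_\tau = M_{e^\tau}/e^{\tau/2}$ to a spatial cutoff $\bar u$ equal to $u$ on $I^R_t$ and vanishing outside $I^{R+1}_t$; since $N_\tau$ is uniformly close to the self-shrinker $\Sigma$ on compact sets, the drift-type arguments of \cite{Wang_unique,LP} apply with controlled error, and pulling back produces \eqref{Car_res_cut}.

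The combination step is where the delicacy lies. One multiplies \eqref{Car_res_cut} by a constant $C_1 = C_1(C,r,R)$ chosen so that $C_1 \Phi \geq C$ pointwise on $I^r_t \subseteq I^R_t$, and adds the result to \eqref{key_est_minus}. The negative right-hand terms of \eqref{key_est_minus}, which are supported in $I^r_t$, are then absorbed by $C_1$ times the corresponding positive $I^R_t$-terms of \eqref{Car_res_cut}. On the left we have $C_1 \|L\bar u\|^2_{I^{R+1}_t, \Phi\mathcal{G}}$, which expands as a contribution bounded by $C_1 \|Lu\|^2_{\mathcal{G}}$ (since $\Phi \leq 1$) plus a cutoff commutator contribution supported in the annulus $I^{R+1}_t\setminus I^R_t$. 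For $R \gg r$ sufficiently large, the Gaussian weight in the rescaled coordinates makes this annular contribution small enough to be absorbed into the positive terms $\|t^{-1-\beta/2}u\|^2_\mathcal{G}$ and $\varepsilon\|t^{-1/2}u\|^2_{\mathcal{G},1}$ already present on the right of \eqref{key_est_minus}. Increasing $\alpha$ a final time to dominate the multiplier $C_1$ on the left yields \eqref{key_est}.

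The main obstacle is precisely this delicate ordering of the parameters: $r$ must first be fixed based on how much curvature smallness is required outside $I^r_t$ in (i); then $R \gg r$ must be taken large enough that the Gaussian factor in the transition annulus dominates the cutoff commutator errors; only then can the multiplier $C_1$ be chosen to absorb the negative contributions in \eqref{key_est_minus}; and finally $\alpha$ must be increased to dominate all of $r$, $R$ and $C_1$ on the left-hand side. Tracking how the Gaussian factor $\Phi$ interacts with the singular factor $\mathcal{G}$ across the scale-invariant cutoff region is the most delicate calculation; outside of this, once Theorem \ref{AC_sing_struc} is in hand, the individual estimates (i) and (ii) are fairly standard adaptations of \cite{LP} and of the shrinker-scale Carleman argument in \cite{Wang_unique}.
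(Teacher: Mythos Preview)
Your proposal is correct and follows essentially the same route as the paper: invoke Theorem \ref{smooth_global_Carleman_neg_term} for the global estimate with negative core terms, invoke Theorem \ref{Core_Car} (scaled back via \eqref{Op_trans} and \eqref{v_der_trans}) for the Gaussian core estimate on the cutoff $\bar u$, multiply the latter by a constant large enough that $C_1\Phi \geq C$ on $I^r_t$, add, absorb the annular cutoff errors using $\Phi \leq e^{-R^2/4}$ on $A^R_t$ with $R\gg r$, and finally take $\alpha$ large. One small correction to your parameter bookkeeping: the multiplier should depend only on $C$, $r$ and $\beta$ (in the paper it is $64Ce^{r^2/4}/\beta$), not on $R$; the point is precisely that $C_1$ is fixed \emph{before} $R$ is sent large, so that $C_1 R e^{-R^2/4}\to 0$ and the annular terms are genuinely absorbable---if $C_1$ were allowed to grow with $R$ this absorption step could fail.
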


As was mentioned above, Theorem \ref{smooth_global_Carleman} is actually a combination of two Carleman estimates, the first of which is similar to Theorem \ref{smooth_global_Carleman}, but with some negative terms in a rescaled ball.  

To facilitate this estimate, given $r<\infty$ we denote the interior region and exterior region by
\[
I^r_t=\{x\in M_t\;|\; |x|\leq r\sqrt{t}\} \qquad E^r_t=M_t-I^r_t.
\]
and denote,
\begin{equation}
\|u\|^2_{E^{r}_t,\mathcal{G}}=\int_{0}^T\int_{E^r_t}u^2\mathcal{G},\qquad \|u\|^2_{E^r_t,\mathcal{G},1}=\int_{0}^T\int_{E^r_t}|\nabla u|^2\mathcal{G},
\end{equation}
and similarly for the interior region
\begin{equation}
\|u\|^2_{I^{r}_t,\mathcal{G}}=\int_{0}^T\int_{I^r_t}u^2\mathcal{G},\qquad \|u\|^2_{I^r_t,\mathcal{G},1}=\int_{0}^T\int_{I^r_t}|\nabla u|^2\mathcal{G}.
\end{equation}

\begin{lemma}\label{first_lem}
Let $M_t$ be a BMCF on $[0,T]$, smooth outside of $(0,0)$, with AC singularity at $(0,0)$.
Suppose  $u:M\times [0,T]\rightarrow \mathbb{R}$ is a  smooth function such that $u(\cdot,T)=0$ and $u(\cdot ,0)=0$, and suppose that there exists some $\beta_1>0$ such that  
\begin{equation}
\left|u\exp\left(\frac{1}{t^{\beta_1}}\right)\right|+\left|\nabla u\exp\left(\frac{1}{t^{\beta_1}}\right)\right| \leq D<\infty.
\end{equation}
There exists some $C<\infty$ such that for every $\beta\in (0,\beta_1)$ and $\varepsilon>0$ there exists some $r>0$ such that for every $\alpha \geq 1$ 
\begin{align}
\|Lu\|^2_{\mathcal{G}} &\geq \frac{\alpha \beta(1+\beta/2)}{2} \|t^{-1-\beta/2} u\|^2_{E^r_t,\mathcal{G}}-\varepsilon\|t^{-1/2}u\|^2_{E^r_t,\mathcal{G},1}\\
&-C\alpha\beta\|t^{-1-\beta/2}u\|^2_{I^r_t,\mathcal{G}}-C\|t^{-1/2}u\|^2_{I^r_t,\mathcal{G},1}
\end{align}
\end{lemma}

\begin{proof}
Setting 
\[
z:=\sqrt{\mathcal{G}}u=e^{\frac{\alpha}{2t^{\beta}}}u,
\]
we easily compute 
\[
Lu = \mathcal{G}^{-1/2}\frac{\partial z}{\partial t}+\mathcal{G}^{-1/2}\Delta_{g(t)}z+\frac{\alpha \beta}{2t^{1+\beta}}\mathcal{G}^{-1/2}z.
\] 
Thus  
\begin{equation}\label{to_estimate_glob_car_lemma}
\| Lu\|^2_{\mathcal{G}}\geq  2\left\langle \frac{\partial z}{\partial t}, \Delta_{g(t)}z \right\rangle + \alpha\beta \left\langle \frac{\partial z}{\partial t}, \frac{z}{t^{1+\beta}} \right\rangle,
\end{equation}
provided the right-hand side is defined. Note that the inner product appearing {here} is the standard, unweighted inner product in space time
\[
\langle z_1, z_2 \rangle = \int_0^T\int_{M_t}z_1z_2d\mathrm{Vol}_tdt.
\]
We will integrate by parts the two terms on the right-hand side  of \eqref{to_estimate_glob_car_lemma} to obtain expressions which do not involve a time derivative of $z$.

For the second term on the right-hand side, observe 
\begin{align}\label{to_estimate_glob_car_lemma_1}
 \alpha\beta\left\langle \frac{\partial z}{\partial t}, \frac{z}{t^{1+\beta}} \right\rangle&= \frac{\alpha \beta}{2} \int\partial_t \int \left(\frac{z^2}{t^{1+\beta}}\right)+ \frac{\alpha \beta(1+\beta)}{2}\left\|\frac{z}{t^{1+\beta/2}}\right\|^2- \frac{\alpha \beta}{2}\left\|H\frac{z}{t^{(1+\beta)/2}}\right\|^2 \nonumber \\
 &= \frac{\alpha \beta(1+\beta)}{2}\left\|\frac{z}{t^{1+\beta/2}}\right\|^2- \frac{\alpha \beta}{2}\left\|H\frac{z}{t^{(1+\beta)/2}}\right\|^2,
\end{align}
where the right-hand side is well defined by \eqref{u_asymptotics}, and the second equality is valid since the temporal boundary values of $z^2/t^{1+\beta}$ are zero since $u(\cdot,T)=0$ and by \eqref{u_asymptotics} again. Note that the appearance of the involving $H$ is due to the time derivative of the volume form
\begin{equation}
\frac{d}{dt}\mathrm{Vol}_t=H^2\mathrm{Vol}_t.
\end{equation}

For the first term of the right-hand side of \eqref{to_estimate_glob_car_lemma}, observe that,
\begin{equation}\label{first_term_bla}
2\frac{\partial z}{\partial t}\Delta_{g(t)}z=2\mathrm{div}_{g(t)}\left(\frac{\partial z}{\partial t}\nabla_{g(t)}z\right)-2\left\langle\nabla_{g(t)}{\frac{\partial z}{\partial t}},\nabla_{g(t)}z\right\rangle_{g(t)},
\end{equation}
{ where $\langle \cdot, \cdot \rangle_{g(t)}$ denotes the pointwise inner product (Riemannian metric) on $M_t$}
Now,
\begin{equation}
2\left\langle\nabla_{g(t)}{\frac{\partial z}{\partial t}},\nabla_{g(t)}z\right\rangle_{g(t)} = 2g^{ij}\frac{\partial^2z}{\partial x_i \partial_t}\frac{\partial z}{\partial x_j}=\frac{d}{dt}\left(g^{ij}\frac{\partial z}{\partial x_i}\frac{\partial z}{\partial x_j}\right)- \frac{dg^{ij}}{dt}\frac{\partial z}{\partial x_i}\frac{\partial z}{\partial x_j},
\end{equation}
which, after taking  into account the evolution of the metric, 
\begin{equation}
g'(t)=2HA,\qquad
\end{equation}
yields 
\begin{equation}
2\left\langle\nabla_{g(t)}{\frac{\partial z}{\partial t}},\nabla_{g(t)}z\right\rangle_{g(t)} = 2g^{ij}\frac{\partial^2z}{\partial x_i \partial_t}\frac{\partial z}{\partial x_j}=\frac{d}{dt}g(\nabla z,\nabla z)+2HA(\nabla z,\nabla z).
\end{equation}
Substituting this into \eqref{first_term_bla}, the  divergence theorem, the evolution of the volume form,  and the boundary conditions once more we have
\begin{equation}
2\left\langle \frac{\partial z}{\partial t}, \Delta_{g(t)}z \right\rangle =  -\iint 2HA(\nabla_{g(t)}z,\nabla_{g(t)}z)+\iint H^2|\nabla z|^2.
\end{equation}

Note that since $M_t$ has {an AC} singularity at $0$, it follows from Theorem \ref{AC_sing_struc} (Structure of flows with AC singularity) that there exists some $C$ such that 
\begin{equation}\label{bounds_always}
H^2g-2HA \geq -\frac{Cg}{t},\qquad H^2 \leq \frac{C}{t},
\end{equation}
and there exists some $r<\infty$ such that for every $x\in E^r_t$ 
\begin{equation}
H^2g-2HA \geq -\frac{\varepsilon g}{t},\qquad H^2 \leq \frac{\beta}{4t} 
\end{equation}

Combining these with the definition of $z$ and  \eqref{to_estimate_glob_car_lemma}, \eqref{to_estimate_glob_car_lemma_1} yields the desired estimate.
\end{proof}

\begin{theorem}[Global Carleman with interior errors]\label{smooth_global_Carleman_neg_term}
Under the same conditions and notations of  Lemma \ref{first_lem},  there exist $T_0>0$, $\delta>0$  and $\alpha_0<\infty$ such that if  $\alpha\geq \alpha_0$, $T\leq T_0$ 
\begin{align}
\|Lu\|^2_{\mathcal{G}} &\geq  \alpha \beta \delta\|t^{-1-\beta/2}u\|^2_{E^r_t,\mathcal{G}}+ \|t^{-1/2}u\|^2_{E^r_t,\mathcal{G},1}\\
& -C\alpha\beta\|t^{-1-\beta/2}u\|^2_{I^r_t,\mathcal{G}}-C\|t^{-1/2}u\|^2_{I^r_t,\mathcal{G},1}.
\end{align}
\end{theorem}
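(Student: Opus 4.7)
The plan is to upgrade Lemma \ref{first_lem} by producing an auxiliary Carleman-type estimate whose strength lies in the gradient term, and then combine the two estimates via a convex combination so that the small negative exterior gradient term $-\varepsilon\|t^{-1/2}u\|^2_{E^r_t,\mathcal{G},1}$ of Lemma \ref{first_lem} is overwhelmed, leaving a positive gradient term on the exterior.

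The first step is to derive the auxiliary identity obtained by multiplying $Lu$ by $u\mathcal{G}/t$ and integrating over $(M_t)_{t\in[0,T]}$. Performing the IBP in space (using that $\mathcal{G}$ is $x$-independent) and in time (using the BMCF volume evolution $\partial_t dV_t=H^2dV_t$ and $\mathcal{G}_t=-\alpha\beta\mathcal{G}/t^{1+\beta}$), and observing that the temporal boundary contributions vanish because $u(\cdot,T)=0$ and the exponential decay \eqref{u_asymptotics} dominates every polynomial weight, one gets the identity
\[
\int_0^T\!\!\int_{M_t} Lu\cdot\frac{u\mathcal{G}}{t}=\tfrac12\|t^{-1}u\|^2_\mathcal{G}+\tfrac{\alpha\beta}{2}\|t^{-1-\beta/2}u\|^2_\mathcal{G}-\tfrac12\|Hu/\sqrt{t}\|^2_\mathcal{G}-\|t^{-1/2}\nabla u\|^2_\mathcal{G}.
\]
Pairing $Lu\cdot u/t=(t^{\beta/2}Lu)(u/t^{1+\beta/2})$ and applying Young's inequality with parameter tuned to a small fraction of $\alpha\beta$, together with the global curvature bound $H^2\le C/t$ furnished by Theorem \ref{AC_sing_struc}(1), yields an auxiliary inequality of the schematic form
\[
\|Lu\|^2_\mathcal{G}\ge c_0\,\|t^{-1/2}\nabla u\|^2_\mathcal{G}-C_0\,\alpha\beta\,\|t^{-1-\beta/2}u\|^2_\mathcal{G}
\]
valid for $T$ small and $\alpha$ large, where the $T^\beta$ corrections (from controlling $\|t^{-1}u\|^2\le T^\beta\|t^{-1-\beta/2}u\|^2$) are absorbed into the main terms.

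The second step is the combination: take a convex combination $(1-\eta)\cdot\text{Lemma \ref{first_lem}}+\eta\cdot\text{(auxiliary)}$ for a fixed small $\eta>0$. The exterior $u^2$ coefficient becomes $\alpha\beta\big[(1-\eta)(1+\beta/2)/2-\eta C_0\big]$, which remains of the form $\alpha\beta\delta$ for a suitably small $\delta>0$ provided $\eta$ is chosen small enough (depending on $\beta$ and $C_0$). The exterior gradient coefficient becomes $\eta c_0-(1-\eta)\varepsilon$, which is positive once $r$ is enlarged so that $\varepsilon\ll\eta c_0$. The interior error terms from both estimates are of the permitted type $-C\alpha\beta\|t^{-1-\beta/2}u\|^2_{I^r_t,\mathcal{G}}$ and $-C\|t^{-1/2}u\|^2_{I^r_t,\mathcal{G},1}$, so combining only enlarges $C$.

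The main obstacle will be the sign structure of the auxiliary identity: it naturally presents $\|t^{-1/2}\nabla u\|^2_\mathcal{G}$ as an equality in terms of $\|u\|^2$-norms and $\int Lu\cdot u/t\,\mathcal{G}$, so the gradient appears on the ``wrong'' side for a direct lower bound on $\|Lu\|^2$. Extracting a positive gradient lower bound therefore forces us to pay a loss of order $\alpha\beta$ in the $\|t^{-1-\beta/2}u\|^2$ coefficient through the Young pairing. The delicate point is to verify that this loss is strictly smaller than the positive $\alpha\beta(1+\beta/2)/2$ contribution from Lemma \ref{first_lem}, which pins down the admissible range of $\eta$ and hence the resulting constant $\delta$. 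Once $\eta$ is fixed, choosing $T_0$ sufficiently small and $\alpha_0$ sufficiently large absorbs all $T^\beta$-dependent remainders; a harmless rescaling normalises the exterior gradient coefficient to $1$, producing the claimed inequality.
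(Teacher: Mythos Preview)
Your approach is essentially the same as the paper's: both derive the identity $\int Lu\cdot u\mathcal{G}/t = \tfrac12\|t^{-1}u\|^2_\mathcal{G}+\tfrac{\alpha\beta}{2}\|t^{-1-\beta/2}u\|^2_\mathcal{G}-\tfrac12\|Ht^{-1/2}u\|^2_\mathcal{G}-\|t^{-1/2}\nabla u\|^2_\mathcal{G}$ (via the same IBP you describe), extract from it a gradient upper bound, and feed this back into Lemma~\ref{first_lem} so that the small negative $-\varepsilon\|t^{-1/2}u\|^2_{E^r_t,\mathcal{G},1}$ becomes a positive $+\|t^{-1/2}u\|^2_{E^r_t,\mathcal{G},1}$ at the expense of a $\tfrac{(1+\delta)\alpha\beta}{2}\|t^{-1-\beta/2}u\|^2_\mathcal{G}$ loss, which is beaten by the $\tfrac{(1+\beta/2)\alpha\beta}{2}$ gain on the exterior. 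The only organisational difference is that the paper substitutes the gradient bound directly (writing $-\varepsilon\|\cdot\|^2_{E^r_t,1}=+\|\cdot\|^2_{E^r_t,1}-(1+\varepsilon)\|\cdot\|^2_{E^r_t,1}$ and bounding the second piece), whereas you first repackage it as a free-standing auxiliary inequality and then take a convex combination.

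One point to tighten: your ``harmless rescaling'' is not harmless. With a fixed $c_0$ and small $\eta$ the exterior gradient coefficient is $\eta c_0-(1-\eta)\varepsilon<1$, and dividing the inequality by this constant changes the left-hand side to $\|Lu\|^2_\mathcal{G}/c'$, which is not what the theorem claims. The paper's direct substitution avoids this by design, landing exactly on coefficient~$1$. In your framework you can fix it by taking the Young parameter $\lambda$ large so that $c_0=2\lambda$ is large; then with $\eta\approx 1/(2\lambda)$ you get $\eta c_0\ge 1$ while $\eta C_0\approx (1+\delta')/2$, and the exterior $u^2$ coefficient $(1-\eta)(1+\beta/2)/2-(1+\delta')/2$ stays positive for $\delta'<\beta/2$.
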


\begin{proof}
We start by writing
\begin{equation}
\left\langle \frac{\sqrt{\mathcal{G}}}{t^{1+\beta/2}}u, \sqrt{\mathcal{G}}t^{\beta/2}Lu \right\rangle= \left\langle \frac{1}{t}e^{\frac{\alpha}{t^{\beta}}}u, \frac{\partial u}{\partial t}\right\rangle +\left\langle \frac{1}{t}e^{\frac{\alpha}{t^{\beta}}}u, \Delta_{g(t)}u\right\rangle.
\end{equation}
As before, using the boundary conditions and \eqref{bounds_always} we have
\[
\left\langle \frac{1}{t}e^{\frac{\alpha}{t^{\beta}}}u, \frac{\partial u}{\partial t}\right\rangle{\geq}\frac{\alpha\beta }{2}\iint  \frac{\mathcal{G}}{t^{2+\beta}} u^2+\frac{1-C}{2}\iint  \frac{\mathcal{G}}{t^{2}} u^2,
\] 
so for every $\delta>0$, we can take $T_0$ sufficiently small so that
\begin{equation}\label{global_car_tech_est1}
\left|\left\langle \frac{1}{t}e^{\frac{\alpha}{t^{\beta}}}u, \frac{\partial u}{\partial t}\right\rangle\right| \leq  \frac{1+\delta}{2} \alpha \beta \left\|\frac{u}{t^{1+\beta/2}}\right\|^2_{\mathcal{G}},
\end{equation}
{provided} $T$ is sufficiently small. Using the divergence theorem on every time slice, we also get
\begin{align}\label{global_car_tech_est_2}
\left\langle\frac{1}{t}\mathcal{G}u, \Delta_{g(t)}u\right\rangle=\iint\frac{1}{t}\mathrm{div}_{g(t)}\left(\mathcal{G} u\nabla u\right)-\iint\frac{1}{t}\mathcal{G}|\nabla u|^2=-\|t^{-1/2}u\|_{\mathcal{G},1}^2.
\end{align}

\bigskip

Combining \eqref{global_car_tech_est1} and \eqref{global_car_tech_est_2} we get
\begin{align}
\|t^{-1/2}u\|_{E^r_t,\mathcal{G},1}^2  \leq \|t^{-1/2}u\|_{\mathcal{G},1}^2 &\leq \left|\left\langle \frac{\sqrt{\mathcal{G}}}{t^{1+\beta/2}}u, \sqrt{\mathcal{G}}t^{\beta/2}Lu \right\rangle\right|+ \frac{(1+\delta)\alpha\beta}{2} \left\|\frac{u}{t^{1+\beta/2}}\right\|^2_{\mathcal{G}} \\
&\leq T^{\beta/2}\|Lu\|_{\mathcal{G}}\left\|\frac{u}{t^{1+\beta/2}}\right\|_{\mathcal{G}}+ \frac{(1+\delta)\alpha\beta}{2} \left\|\frac{u}{t^{1+\beta/2}}\right\|^2_{\mathcal{G}}
\end{align}
Substituting this into the  previous lemma we get
\begin{align}
\|Lu\|^2_{\mathcal{G}} &\geq \frac{\alpha \beta (1+\beta/2)}{2}\left\|\frac{u}{t^{1+\beta/2}}\right\|^2_{E^r_t,\mathcal{G}} +\|t^{-1/2}u\|^2_{E^r_t,\mathcal{G},1}\\
&-(1+\varepsilon)\left(T^{\beta/2}\|Lu\|_{\mathcal{G}}\left\|\frac{u}{t^{1+\beta/2}}\right\|_{\mathcal{G}}+ \frac{(1+\delta)\alpha\beta}{2} \left\|\frac{u}{t^{1+\beta/2}}\right\|^2_{\mathcal{G}}\right)\\
 & -C\alpha\beta\|t^{-1-\beta/2}u\|^2_{I^r_t,\mathcal{G}}-C\|t^{-1/2}u\|^2_{I^r_t,\mathcal{G},1}.
\end{align} 
Thus, if  $\delta$ and $\varepsilon$ are sufficiently small this implies
\begin{align}
\|Lu\|^2_{\mathcal{G}} &\geq \alpha \beta \delta \left\|\frac{u}{t^{1+\beta/2}}\right\|^2_{E^r_t,\mathcal{G}} +\|t^{-1/2}u\|^2_{E^r_t,\mathcal{G}_{0,1}}-(1+\varepsilon)\left(T^{\beta/2}\|Lu\|_{\mathcal{G}}\left\|\frac{u}{t^{1+\beta/2}}\right\|_{\mathcal{G}}\right)\\
& -C\alpha\beta\|t^{-1-\beta/2}u\|^2_{I^r_t,\mathcal{G}}-C\|t^{-1/2}u\|^2_{I^r_t,\mathcal{G},1}.
\end{align} 

Using the absorbing  inequality the asserted result follows, provided $T_0$ is sufficiently small and $\alpha$ is sufficiently large.
\end{proof}

In order to obtain control of the two negative terms at the right-hand side of Theorem \ref{smooth_global_Carleman_neg_term} (Global Carleman estimate with interior errors) we will use a Gaussian Carleman estimate, which is best understood in terms of the rescaled flow.

To this end choose $R\gg r$ and take 
\begin{equation}\label{tilde_u_def}
\tilde{u}=u\psi(x/\sqrt{t})
\end{equation}
where $\psi$ is $1$ up to distance $R$ and $0$ from distance $R+1$, with the bounds 
\begin{equation}\label{psi_bounds}
|\psi'| \leq 2, \qquad |\psi''|\leq 10.
\end{equation}

Set 
\begin{equation}\label{N_from_M}
N_{\tau}=\frac{M_t}{\sqrt{t}},\qquad \tau=\log(t), 
\end{equation}
and 
\begin{equation}\label{v_from_u}
\tilde{v}(y,\tau)=e^{-\tau/2}\tilde{u}(ye^{\tau/2},e^{\tau}), \qquad {v}(y,\tau)=e^{-\tau/2}{u}(ye^{\tau/2},e^{\tau}).
\end{equation}
Using the chain rule, we get that 
\begin{equation}\label{v_der_trans}
\tilde{v}_{\tau}=-\frac{1}{2}e^{-\tau/2}\tilde{u}+y\cdot \nabla \tilde{u}+e^{\tau/2}\tilde{u}_{t}, \qquad \Delta \tilde{v} = e^{\tau/2} \Delta \tilde{u},\qquad \nabla \tilde{v}= \nabla \tilde{u},
\end{equation}
so{,} by setting 
\begin{equation}\label{B_def}
B\tilde{v}:=\partial_{\tau}\tilde{v}+\Delta \tilde{v}-\frac{1}{2} y \cdot \nabla \tilde{v} +\frac{1}{2}\tilde{v},
\end{equation}
it holds
\begin{equation}\label{Op_trans}
L\tilde{u}=e^{\tau/2}B\tilde{v}. 
\end{equation}
Note that $\tilde{v}$ is supported on $B(0,R+1)$. 

\bigskip

Now set
\begin{equation}
\mathcal{F}=\exp\left(\frac{n\tau}{2}+\alpha \exp\left(-\beta \tau\right)-|y|^2/4\right), 
\end{equation}
and, similarly to before, let us denote

\begin{equation}
\|\tilde{v}\|^2_{\mathcal{F}}=\int_{-\infty}^{\log T}\int_{N_\tau}{\tilde{v}}^2\mathcal{F},\qquad \|\tilde{v}\|^2_{\mathcal{F},1}=\int_{-\infty}^{\log T}\int_{N_\tau}|\nabla \tilde{v}|^2\mathcal{F}.
\end{equation}

The following theorem provides the Carleman estimate in the core region, which leads to controlling the negative terms in Theorem \ref{smooth_global_Carleman_neg_term}.

\begin{theorem}[Gaussian Carleman at the core shrinking scale]\label{Core_Car}
Under the same conditions of Lemma \ref{first_lem}{.} For every $R<\infty${,}  there exist $T_0$ and $\alpha_0$ such that if $N_t$ is obtained from $M_t$ by \eqref{N_from_M}, $\tilde{u}$ is defined by \eqref{tilde_u_def}, $\tilde{v}$ is obtained from $\tilde{u}$ by \eqref{v_from_u}, $\alpha\geq \alpha_0$ and $T<T_0${,} then 
\begin{equation}
\|B\tilde{v}\|^2_\mathcal{F} \geq \frac{\alpha \beta^2}{16}\left\|\tilde{v}\exp\left(-\frac{\beta \tau}{2}\right)\right\|^2_{\mathcal{F}}+\frac{\beta}{32}\|\tilde{v}\|^2_{\mathcal{F},1}.
\end{equation}
\end{theorem}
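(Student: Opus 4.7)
The plan is to mimic the Lees--Protter-style pairing used in Theorem~\ref{smooth_global_Carleman_neg_term}, now on the rescaled flow $N_\tau$ equipped with the weight $\mathcal{F}$. The crucial feature is that the $y$-dependent factor of $\mathcal{F}$ is exactly $e^{-|y|^2/4}$, with respect to which the drift Laplacian $\Delta - \tfrac12 y\cdot\nabla$ is formally self-adjoint on each slice. I will derive the estimate on $\|\tilde v\,e^{-\beta\tau/2}\|^2_{\mathcal{F}}$ from a symmetric-antisymmetric commutator identity, the estimate on $\|\tilde v\|^2_{\mathcal{F},1}$ from a direct pairing $\langle\tilde v, B\tilde v\rangle_{\mathcal{F}}$, and then combine the two.

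For the direct pairing, $\tilde v\,\partial_\tau\tilde v = \tfrac12\partial_\tau(\tilde v^2)$ together with integration by parts in $\tau$ (the boundary term at $\tau=\log T$ vanishes since $\tilde v(\cdot,\log T)=0$, and the one at $\tau\to-\infty$ vanishes by the super-exponential decay of $\tilde v$ inherited from \eqref{u_asymptotics} with $\beta<\beta_1$) and the computation $\partial_\tau\mathcal{F} = \mathcal{F}(\tfrac n2 - \alpha\beta e^{-\beta\tau})$ extract the main term $\tfrac{\alpha\beta}{2}\|\tilde v\,e^{-\beta\tau/2}\|^2_{\mathcal{F}}$. For the spatial part $\Delta\tilde v - \tfrac12 y\cdot\nabla\tilde v$, slice-wise Gaussian integration by parts contributes exactly $-\|\tilde v\|^2_{\mathcal{F},1}$. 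Collecting everything yields
\begin{equation}\label{pairing_plan}
\langle\tilde v, B\tilde v\rangle_{\mathcal{F}} = \tfrac{\alpha\beta}{2}\|\tilde v\,e^{-\beta\tau/2}\|^2_{\mathcal{F}} - \|\tilde v\|^2_{\mathcal{F},1} + R,
\end{equation}
where $|R|\le C\|\tilde v\|^2_{\mathcal{F}}$ with $C$ uniform on $N_\tau\cap B(0,R+1)$, thanks to the bounds on $|H|$ and on the volume-form evolution $E := H^2 + \tfrac12 H\langle y,\nu\rangle$ guaranteed by Theorem~\ref{AC_sing_struc}.

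To produce the $\beta^2$ coefficient, I conjugate by the weight: setting $z := e^\Psi\tilde v$ with $\Psi = \tfrac{n\tau}{4}+\tfrac{\alpha}{2}e^{-\beta\tau}-\tfrac{|y|^2}{8}$, a direct computation gives $e^\Psi B(e^{-\Psi}z) = \partial_\tau z + \Delta z + \tilde V z$ with $\tilde V = \tfrac{\alpha\beta}{2}e^{-\beta\tau} + V_0$ and $V_0$ uniformly bounded on $B(0,R+1)$; the crucial point is that no first-order term appears, as the drift correction $-2\nabla\Psi - \tfrac12 y = -\tfrac12 y^\perp$ is normal and therefore annihilates tangential gradients $\nabla z$. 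Decomposing this operator into its symmetric part $S = \Delta + \tilde V - E/2$ and antisymmetric part $A = \partial_\tau + E/2$ relative to $L^2(dV_{N_\tau}d\tau)$, the identity $\|B\tilde v\|^2_{\mathcal{F}} = \|Az\|^2 + \|Sz\|^2 + \langle z,[S,A]z\rangle$ combined with $[\tilde V,\partial_\tau] = -\partial_\tau\tilde V = \tfrac{\alpha\beta^2}{2}e^{-\beta\tau} - \partial_\tau V_0$ produces
\begin{equation}\label{commutator_plan}
\|B\tilde v\|^2_{\mathcal{F}} \ge \tfrac{\alpha\beta^2}{2}\|\tilde v\,e^{-\beta\tau/2}\|^2_{\mathcal{F}} - (\text{absorbable errors});
\end{equation}
the residual commutators $[\Delta,\partial_\tau]$ and $[\Delta,E/2]$ both vanish on exact self-shrinkers and are $o(1)$ on $B(0,R+1)$ as $\tau\to-\infty$.

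Finally, \eqref{commutator_plan} yields $\|\tilde v\,e^{-\beta\tau/2}\|^2_{\mathcal{F}}\le\tfrac{2}{\alpha\beta^2}\|B\tilde v\|^2_{\mathcal{F}}$ up to errors; Cauchy--Schwarz on \eqref{pairing_plan}, $\|\tilde v\|^2_{\mathcal{F}}\le T^\beta\|\tilde v\,e^{-\beta\tau/2}\|^2_{\mathcal{F}}$ (since $\tau\le\log T$), and this bound together give, after absorbing errors for $T_0$ small and $\alpha_0$ large, $\|B\tilde v\|^2_{\mathcal{F}}\ge\tfrac{\beta}{2}\|\tilde v\|^2_{\mathcal{F},1}$. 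Averaging this with \eqref{commutator_plan} yields $\|B\tilde v\|^2_{\mathcal{F}}\ge\tfrac{\alpha\beta^2}{4}\|\tilde v\,e^{-\beta\tau/2}\|^2_{\mathcal{F}}+\tfrac{\beta}{4}\|\tilde v\|^2_{\mathcal{F},1}$, comfortably stronger than the claim. I expect the main technical obstacle to be verifying that the shrinker-error quantities---$E$ and its derivatives, the metric evolution $\partial_\tau g$ hidden inside $[\Delta,\partial_\tau]$, and $\nabla V_0$---are uniformly $o(1)$ on $B(0,R+1)$ as $\tau\to-\infty$, justifying the various error absorptions; this is precisely what Theorem~\ref{AC_sing_struc} and the $C^\infty_{\mathrm{loc}}$ convergence $N_\tau\to\Sigma$ guarantee.
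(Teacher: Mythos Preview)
Your proposal is correct and follows essentially the same two-step strategy as the paper: a commutator/cross-term computation to extract the $\alpha\beta^2$ coefficient on $\|\tilde v e^{-\beta\tau/2}\|^2_{\mathcal F}$ (with an $o(1)\|\tilde v\|^2_{\mathcal F,1}$ error coming from the metric/volume evolution), followed by the direct pairing $\langle \tilde v,B\tilde v\rangle_{\mathcal F}$ to produce the $\|\tilde v\|^2_{\mathcal F,1}$ bound and close the loop.

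The only genuine difference is cosmetic packaging. The paper conjugates by the time-only part $\sqrt{\mathcal F_0}$ and keeps the drift $-\tfrac12 y\cdot\nabla$ explicit, integrating against the Gaussian measure; you instead conjugate by the full weight $e^{\Psi}=\sqrt{\mathcal F}$, which kills the drift (your observation that $-2\nabla\Psi-\tfrac12 y^\top=0$) and lets you phrase step one as a clean $[S,A]$ commutator in unweighted $L^2$. Both routes unwind to the same integrations by parts, and both rely on the same analytic input: the shrinker quantity $S=H+\tfrac12\langle y,\nu\rangle$ and hence the metric evolution $\partial_\tau g=2SA$ and volume evolution are $o(1)$ in $C^\infty$ on $B(0,R+1)$ as $\tau\to-\infty$, which is exactly what $C^\infty_{\mathrm{loc}}$ convergence $N_\tau\to\Sigma$ gives. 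One small correction: the volume-form rate under backwards rescaled MCF is $\partial_\tau dV=S^2\,dV$, not $E=H^2+\tfrac12 H\langle y,\nu\rangle$ as you wrote; this does not affect anything since you only use that $E$ and its derivatives are $o(1)$ on the support.
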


\begin{proof}
Denote 
\begin{equation}
\mathcal{F}_0= \exp\left(\frac{n\tau}{2}+\alpha \exp\left(-\beta \tau\right)\right).
\end{equation}
Setting 
\[
z:=\sqrt{\mathcal{F}_0}\tilde{v}=\exp\left({\frac{n\tau}{4}+\frac{\alpha \exp(-\beta \tau)}{2}}\right)\tilde{v},
\]
we easily compute 
\[
B\tilde{v} = \mathcal{F}_0^{-1/2}\frac{\partial z}{\partial \tau}+\mathcal{F}_0^{-1/2}\Delta z-\frac{1}{2}\mathcal{F}_0^{-1/2}y\cdot \nabla z+\frac{1}{2}\mathcal{F}_0^{-1/2}z\left(1-\frac{n}{2} +\alpha \beta e^{-\beta \tau}\right)
\] 
Thus  
\begin{equation}\label{to_estimate_glob_car_lemma_gauss}
\| B\tilde{v}\|^2_{\mathcal{F}}\geq  2\left\langle \frac{\partial z}{\partial \tau}, \Delta z-\frac{1}{2}y\cdot \nabla z \right\rangle_{e^{-\frac{|y|^2}{4}}} + \left\langle \frac{\partial z}{\partial \tau}, \left(1-\frac{n}{2}+\alpha\beta e^{-\beta \tau}\right)z \right\rangle_{e^{-\frac{|y|^2}{4}}},
\end{equation}
where 
\begin{equation}
\langle z_1 , z_2 \rangle_{e^{-\frac{|y|^2}{4}}}= \int_{-\infty}^{\log T}\int_{N_\tau}z_1z_2{e^{-\frac{|y|^2}{4}}}d\mathrm{Vol}_{\tau}d\tau.
\end{equation}
For the second term on the right-hand side, observe 
\begin{align}\label{to_estimate_glob_car_lemma_1_gauss}
 \alpha\beta\left\langle \frac{\partial z}{\partial \tau}, ze^{-\beta \tau} \right\rangle_{e^{-\frac{|y|^2}{4}}}&= \frac{\alpha \beta}{2} \int \partial_\tau\int \left(z^2e^{-\beta \tau}\right)e^{-\frac{|y|^2}{4}}\\
 &+ \frac{\alpha \beta^2}{2}\left\|ze^{-\frac{\beta\tau}{2}}\right\|^2_{e^{-\frac{|y|^2}{4}}}- \frac{\alpha \beta}{2}\left\|Sze^{-\frac{\beta\tau}{2}}\right\|^2_{e^{-\frac{|y|^2}{4}}},
\end{align}
where the shrinker quantity 
\[
S:=H+\frac{1}{2}\langle y,\nu \rangle,
\]
appears as under Backwards rescaled MCF the volume form evolves by
\[
\frac{d}{d\tau}d\mathrm{Vol}_{\tau}=S^2d\mathrm{Vol}_{\tau}
\]
Assuming $T$ is small enough, it follows from the convergence of the rescaled flow to the {self-shrinker} that $|S|$ can be taken to be arbitrarily small on $B(0,R+1)$,which is where $z$ is supported. Thus,  we can estimate 
\begin{equation}
 \alpha\beta\left\langle \frac{\partial z}{\partial \tau}, ze^{-\beta \tau} \right\rangle_{e^{-\frac{|y|^2}{4}}}\geq \frac{\alpha \beta^2}{3}\left\|ze^{-\frac{\beta\tau}{2}}\right\|^2_{e^{-\frac{|y|^2}{4}}}.
\end{equation}
Estimating $\left\langle \frac{\partial z}{\partial \tau},\left(1-\frac{n}{2}\right)z \right\rangle_{e^{-\frac{|y|^2}{4}}}$ similarly, and observing it is of lower order, we get
\begin{equation}
 \left\langle \frac{\partial z}{\partial \tau}, \left(1-\frac{n}{2}+\alpha\beta e^{-\beta \tau}\right)z \right\rangle_{e^{-\frac{|y|^2}{4}}} \geq \frac{\alpha \beta^2}{4}\left\|ze^{-\frac{\beta\tau}{2}}\right\|^2_{e^{-\frac{|y|^2}{4}}}.
\end{equation}

For the first term of the {right-hand}-side of \eqref{to_estimate_glob_car_lemma_gauss}, observe that
\begin{equation}\label{first_term_bla_gauss}
2\frac{\partial z}{\partial \tau}\left(\Delta z-\frac{1}{2}y\cdot \nabla z\right)e^{-\frac{|y|^2}{4}}=2\mathrm{div}_{g(\tau)}\left(\frac{\partial z}{\partial \tau}\nabla_{g(\tau)}z e^{-\frac{|y|^2}{4}}\right)-2\left\langle\nabla_{g(\tau)}\frac{dz}{d\tau},\nabla_{g(\tau)}z\right\rangle_{e^{-\frac{|y|^2}{4}}}.
\end{equation}
Now,
\begin{equation}
2\left\langle\nabla_{g(\tau)}\frac{dz}{d\tau},\nabla_{g(\tau)}z\right\rangle = 2g^{ij}\frac{\partial^2z}{\partial x_i \partial_\tau}\frac{\partial z}{\partial x_j}=\frac{d}{d\tau}\left(g^{ij}\frac{\partial z}{\partial x_i}\frac{\partial z}{\partial x_j}\right)- \frac{dg^{ij}}{d\tau}\frac{\partial z}{\partial x_i}\frac{\partial z}{\partial x_j},
\end{equation}
which, after taking  into account the evolution of the metric, 
\begin{equation}
g'(\tau)=2SA,\qquad
\end{equation}
yields 
\begin{equation}
2\left\langle\nabla_{g(\tau)}\frac{dz}{d\tau},\nabla_{g(\tau)}z\right\rangle = 2g^{ij}\frac{\partial^2z}{\partial x_i \partial_\tau}\frac{\partial z}{\partial x_j}=\frac{d}{d\tau}g(\nabla z,\nabla z)+2SA(\nabla z,\nabla z).
\end{equation}
Substituting this into \eqref{first_term_bla_gauss}, the  divergence theorem, the evolution of the Gaussian volume,  and the boundary conditions once more we have
\begin{equation}
2\left\langle \frac{\partial z}{\partial \tau}, \Delta z-\frac{1}{2}y\cdot \nabla z \right\rangle_{e^{-\frac{|y|^2}{4}}} =  -\iint 2SA(\nabla_{g(\tau)}z,\nabla_{g(\tau)}z)e^{-\frac{|y|^2}{4}}+\iint S^2|\nabla z|^2e^{-\frac{|y|^2}{4}}.
\end{equation}

Combining these with the definition of $z$, \eqref{to_estimate_glob_car_lemma_gauss}, \eqref{to_estimate_glob_car_lemma_1_gauss}, and assuming again that $T$ is small enough so that $S$ is small on the support of $z$ yields 
\begin{equation}\label{Final_gauss_first}
\|B\tilde{v}\|^2_\mathcal{F} \geq \frac{\alpha \beta^2}{4}\left\|\tilde{v}\exp\left(-\frac{\beta \tau}{2}\right)\right\|^2_{\mathcal{F}}-\frac{\beta}{16} \|\tilde{v}\|^2_{\mathcal{F},1}.
\end{equation}

\bigskip

Now, write
\begin{equation}
\left\langle e^{-\frac{\beta \tau}{2}}\tilde{v}, e^{\frac{\beta \tau}{2}}B\tilde{v} \right\rangle_{\mathcal{F}}= \left\langle \mathcal{F}_0\tilde{v}, \frac{\partial \tilde{v}}{\partial \tau}\right\rangle_{e^{-\frac{|y|^2}{4}}} +\left\langle \mathcal{F}_0\tilde{v}, \Delta \tilde{v}-\frac{1}{2}y\nabla\ \tilde{v} +\frac{1}{2}\tilde{v} \right\rangle_{e^{-\frac{|y|^2}{4}}}.
\end{equation}
As before, integrating by parts and using the boundary conditions and the convergence to the shrinker, we get 
\begin{equation}\label{global_car_tech_est1_gauss}
\left|\left\langle \tilde{v}, \frac{\partial \tilde{v}}{\partial \tau}\right\rangle_{\mathcal{F}}\right| \leq   \alpha \beta \left\|{\tilde{v}}e^{-\frac{\beta\tau}{2}}\right\|^2_{\mathcal{F}},
\end{equation}
provided $T$ is sufficiently small. Using the divergence theorem on every time slice, we also get
\begin{align}\label{global_car_tech_est_2_gauss}
\left\langle \mathcal{F}_0\tilde{v}, \Delta \tilde{v}-\frac{1}{2}y\nabla\ \tilde{v} +\frac{1}{2}\tilde{v} \right\rangle_{e^{-\frac{|y|^2}{4}}}=-\|\tilde{v}\|_{\mathcal{F},1}^2+\frac{1}{2}\|\tilde{v}\|_{\mathcal{F}}^2.
\end{align}

\bigskip

Combining \eqref{global_car_tech_est1_gauss} and \eqref{global_car_tech_est_2_gauss} we get
\begin{align}
\|\tilde{v}\|_{\mathcal{F},1}^2 &\leq \left|\left\langle e^{-\frac{\beta \tau}{2}}\tilde{v}, e^{\frac{\beta \tau}{2}}B\tilde{v} \right\rangle_{\mathcal{F}}\right|+  \alpha \beta \left\|{\tilde{v}}e^{-\frac{\beta\tau}{2}}\right\|^2_{\mathcal{F}} \\
&\leq T^{\beta/2}\|B\tilde{v}\|_{\mathcal{F}}\left\|{\tilde{v}}e^{-\frac{\beta\tau}{2}}\right\|_{\mathcal{F}} + \alpha \beta \left\|{\tilde{v}}e^{-\frac{\beta\tau}{2}}\right\|^2_{\mathcal{F}}. 
\end{align}
Substituting this into \eqref{Final_gauss_first} we get
\begin{align}
\|B\tilde{v}\|^2_{\mathcal{F}} &\geq \frac{\alpha \beta^2}{4}\left\|\tilde{v}\exp\left(-\frac{\beta \tau}{2}\right)\right\|^2_{\mathcal{F}}+\frac{\beta}{16} \|\tilde{v}\|^2_{\mathcal{F},1}\\
&-\frac{\beta}{8}\left( T^{\beta/2}\|B\tilde{v}\|_{\mathcal{F}}\left\|{\tilde{v}}e^{-\frac{\beta\tau}{2}}\right\|_{\mathcal{F}} + \alpha \beta \left\|{\tilde{v}}e^{-\frac{\beta\tau}{2}}\right\|^2_{\mathcal{F}} \right).
\end{align} 
Using the absorbing  inequality the asserted result follows, provided $T_0$ is sufficiently small and $\alpha$ is sufficiently large.
\end{proof}

We can now prove our main technical estimate, Theorem \ref{smooth_global_Carleman} (Smooth global Carleman)

\begin{proof}[Proof of Theorem \ref{smooth_global_Carleman} (Smooth global Carleman)]
Take $R \gg r$ and note that $v=\tilde{v}$ on $B(0,R)$, and $B\tilde{v}$ is supported on $B(0,R+1)$. Thus, in light {of} the form  of the {operator} $B$ given in \eqref{B_def} and by the cutoff bounds \eqref{psi_bounds}, setting $A^R=B(0,R+1)-B(0,R)$ we have
\begin{align}
&\|Bv\|^2_{B^{R+1},\mathcal{F}}+10R^2\|v\|^2_{A^R,\mathcal{F}}+100\|v\|^2_{A^R,\mathcal{F},1} \\
 &\qquad \qquad\qquad\geq \frac{\alpha \beta^2}{16}\left\|v\exp\left(-\frac{\beta \tau}{2}\right)\right\|^2_{B^{R},\mathcal{F}}+\frac{\beta}{32}\|v\|^2_{B^{R}\mathcal{F},1}\nonumber,
\end{align}
where, as {before}, for a set $E$,  $\|f\|_{E,\mathcal{F}}$ and $\|f\|_{E,\mathcal{F},1}$ denote the corresponding integrals restricted to the set $E$. 

Scaling back to $u$ using \eqref{v_der_trans}, and denoting 
\begin{equation}
A^R_t=I^{R+1}_t-I^{R}_t, \qquad \Phi(x,t)=\exp(-|x|^2/4t),
\end{equation}
we have
\begin{align}\label{L_with_cutt}
\left\|Lu \right\|^2_{I^{R+1}_t,\Phi\mathcal{G}}&+10R^2\left\|t^{-1}u \right\|^2_{A^R_t,\Phi\mathcal{G}}+100\left\|t^{-1/2}u \right\|^2_{A^R_t,\Phi\mathcal{G},1} \nonumber\\
& \geq \frac{\alpha\beta^2}{16}\left\|t^{-1-\beta/2}u \right\|^2_{I^R_t,\Phi \mathcal{G}} +\frac{\beta}{32}\left\|t^{-1/2}u\right\|^2_{I^{R}_t,\Phi\mathcal{G}} \\
&\geq \frac{\alpha\beta e^{-\frac{r^2}{4}}}{16}\left\|t^{-1-\beta/2}u \right\|^2_{I^r_t,\mathcal{G}} +\frac{e^{-\frac{r^2}{4}}}{32}\left\|t^{-1/2}u\right\|^2_{I^{r}_t,\mathcal{G}}\nonumber,
\end{align} 
where the last inequality follows  since $R>r$ and as $\Phi$ is bounded from below by $\exp(-r^2/4)$ on $I^r_t$. Multiplying inequality \eqref{L_with_cutt} by $64 C e^{r^2/4}/\beta$, where $C$ is given in Theorem \ref{smooth_global_Carleman_neg_term} (Global Carleman estimate with interior errors), and adding the result to the inequality in that theorem, we get that

\begin{align}
&\|Lu\|^2_{\mathcal{G}} +16Ce^{r^2/4} \left[\left\|Lu \right\|^2_{I^{R+1}_t,\mathcal{G}}+R^2\left\|t^{-1}u \right\|^2_{A^R_t,\Phi\mathcal{G}}+\left\|t^{-1/2}u \right\|^2_{A^R_t,\Phi\mathcal{G},1}\right]\\
 &\geq  \alpha \beta \delta\|t^{-1-\beta/2}u\|^2_{\mathcal{G}}+ \|t^{-1/2}u\|^2_{\mathcal{G},1}.
\end{align}
Now, taking $R$ sufficiently large , the last two terms on the left-hand side can be absorbed into the right-hand side due to the $\Phi$ weight. So{,} after dividing and taking $\alpha$ sufficiently large, we get

\begin{equation}
\|Lu\|^2_{\mathcal{G}} \geq \|t^{-1-\beta/2}u\|^2_{\mathcal{G}}+ \varepsilon\|t^{-1/2}u\|^2_{\mathcal{G},1},
\end{equation}
after taking $\alpha$ sufficiently large. This completes the proof of the theorem.
\end{proof}

\begin{proof}[Proof of Theorem \ref{main_technical_uniqueness_thm}  (Backwards uniqueness for quasilinear...)]
It suffices to prove the result when $T$ is small\footnote{Note, we actually only require the short-time statement of this theorem for backwards uniqueness, as one can appeal to the existing smooth backwards uniqueness theory for mean curvature flow \cites{KS_simple,BU}.}, as then one {has shown that the flows actually agree at a later time, from which point the curvature is bounded. From this time, one can use the methods of \cite{LP} (adapted to the manifolds setting) to reach the final time of the backwards flows (the initial time of the MCFs).}

Take any $0<t_2<t_1<T$  and let $\varphi$ be a function which is $1$ for $t\in [0,t_2]$, decreasing between $t_2$ and $t_1$ and zero to $t\geq t_1$. Letting $\Omega_1$ be the domain $(M_t)_{t\in [t_2,t_1]}$ and $\Omega_2$ be the domain $(M_t)_{t\in[0,t_2]}$, setting $v=\varphi u$,  choosing $\beta\in (0,\beta_1)$ and considering the corresponding $\mathcal{G}$, we have

\begin{align}\label{tech_proof_of_tech_global_car}
\|Lv\|_{\Omega_1,\mathcal{G}}^2+\|Lu\|_{\Omega_2,\mathcal{G}}^2&=\|Lv\|^2_{\mathcal{G}}\geq   \|t^{-1-\beta/2}v\|^2_{\mathcal{G}}+\varepsilon\|t^{-1/2}v\|_{\mathcal{G},1}^2 \nonumber \\
& \geq  \|t^{-1-\beta/2}u\|^2_{\Omega_2,\mathcal{G}}+\varepsilon\|t^{-1/2}u\|_{\Omega_2,\mathcal{G},1}^2,
\end{align} 
where the first inequality uses Theorem \ref{smooth_global_Carleman} (Smooth global Carleman).

In light of \eqref{G_est}, \eqref{u_asymptotics_main}, Theorem \ref{AC_sing_struc} (1), if $T$ is sufficiently small,  we can absorb the second term in the left-hand side of \eqref{tech_proof_of_tech_global_car} into the right. Dropping the $W^{1,2}$ term, to obtain:

\begin{equation}\label{bd_norm}
\|Lv\|_{\Omega_1,\mathcal{G}}^2  \geq  \frac{1}{2}\|t^{-1-\beta/2}u\|^2_{\Omega_2,\mathcal{G}}.
\end{equation}

Now, taking any $t_3\in (0,t_2)$ and setting $\Omega_3=(M_{t})_{t\in [0,t_3]}$ we can safely use \eqref{bd_norm} to obtain
\begin{equation}\label{bd_norm_2}
\|Lv\|_{\Omega_1,\mathcal{G}}^2  \geq  \frac{1}{2}\|t^{-1-\beta/2}u\|^2_{\Omega_2,\mathcal{G}} \geq  \frac{1}{2}\|t^{-1-\beta/2}u\|^2_{\Omega_3,\mathcal{G}} \geq \|u\|^2_{\Omega_3,\mathcal{G}},
\end{equation}
provided $T$ is small enough.

Note however that 
\begin{equation}
q:= \frac{\sup_{t\in [t_2,t_1]} e^{1/t^{\beta}}}{\inf_{t\in [0,t_3]} e^{1/t^{\beta}}}<1,
\end{equation}
{so, recalling}  the definition of $\mathcal{G}$, \eqref{bd_norm_2} implies
\begin{equation}
\|u\|_{\Omega_3}^2 \leq q^{\alpha}\|Lv\|_{\Omega_1}^2\rightarrow 0, 
\end{equation}
as $\alpha \rightarrow \infty$. This completes the proof.
\end{proof}

\begin{proof}[Proof of Theorem \ref{thm_cond_back} (Conditional backwards uniqueness)]
It suffices to prove the result when $T$ is small, for then one can use the existing smooth backwards uniqueness for mean curvature flow \cite{KS_simple,BU}. By Theorem \ref{graph_over_one_another}, when $T$ is sufficiently small, one can write $M^2_t$ as a normal graph of a function $u:M^{1}_t\rightarrow \mathbb{R}$, satisfying
\begin{equation}
\frac{|u(x,t)|}{\sqrt{t}}+|{\nabla}u(x,t)|=o(1).
\end{equation} 
Recall, equation\eqref{prox_ass} assumes a much stronger decay rate for the $C^0$ norm of $u$.  
\begin{equation}\label{fast_decay}
|u(x,t)|\exp(t^{-\beta})=o(1)
\end{equation}
The Type I blowup of $|A|$ of both $M^1_t$ and $M^2_t$, which is the content of Theorem \ref{AC_sing_struc} (1), interpolated  with \eqref{fast_decay} implies that for every $\beta'\in (0,\beta)$
\begin{equation}\label{fast_decay2}
|\nabla u(x,t)|\exp(t^{-\beta'}) =o(1).
\end{equation}
Similarly, interpolating \eqref{fast_decay2} with the Type $I$ bound on $|\nabla A|$ implies that for any $\beta_1\in (0,\beta')$ we have
\begin{equation}\label{fast_decay3}
|\nabla^2 u(x,t)|\exp(t^{-\beta_1}) =o(1).
\end{equation}
 Finally, by Lemma \ref{graph_eq_evolve}, the type one behaviour of $|A|$ again, and \eqref{fast_decay}, \eqref{fast_decay2}, \eqref{fast_decay3} $u$ certainly satisfies
\[
Lu=G(x, u,\nabla u)
\]
with $G$ satisfying \eqref{G_est}, provided $T$ is sufficiently small. 
Thus, the assumption of Theorem \ref{main_technical_uniqueness_thm} (backwards uniqueness for quasilinear heat-type equations assuming fast convergence) are satisfied, and consequently $u\equiv 0$, as desired.
\end{proof}

\section{Lower bound on convergence for flows with asymptotically conical singularities}
Throughout this section, we assume that $M_t=M^1_t$ and $M^2_t$ are two BMCFs defined on $[0,T]$, both smooth outside of $(0,0)$, with AC singularity at $(0,0)$. We moreover assume that $M^2_0=M_0$. In particular, possibly after reducing $T$, Theorem \ref{graph_over_one_another} implies that  $M^2_t$ can be written as a normal graph of a function $u$ over $M_t$, satisfying 
\begin{equation}
\frac{|u(t)|}{\sqrt{t}}+|\nabla u| =o(1). 
\end{equation}
The goal of this section is to improve this rate to an exponential rate, by establishing Theorem \ref{thm_rates} (Rate of convergence).

\bigskip 

Obtaining Theorem \ref{thm_rates} (Rate of convergence) is done in three steps: we first prove a similar result with $\beta=1$ away from the singularity. We then use a localized version of an argument by Wang \cite{Wang_unique}, to obtain that the conclusion of Theorem \ref{thm_rates} (Rate of convergence) holds with some $\beta$ away from the core shrinker scale $C\sqrt{t}$. We then use an observability inequality to get the rate in the core shrinker scale.  It's important to note that the assumption of the Carleman estimate we use in the second stage (Theorem \ref{Car_rough_con|}) differs from Wang's in that it assumes local {conicality}, non-self-shrinking behaviour  as opposed to a global self shrinking asymptotically conical behaviour.

\subsection{Rate of convergence at smooth points}
Recall that $M^2_t$ is expressed as a normal graph of a function $u$ over $M_t$.
First{,} we have the following consequence of \cite[Theorem 4]{EF}, similarly to \cite[Lemma 2.1]{Nguyen} and \cite[Lemma 3.6]{Wang_unique}.
\begin{theorem}\label{exp_con_smooth}
Under the assumptions of Theorem \ref{thm_rates}, for every $\varepsilon>0$ there exists some $\delta>0$ such that on $M_t-B(0,\varepsilon)$ we have 
\begin{equation}
|u(x,t)|+|\nabla u(x,t)| \leq e^{-\delta/t} 
\end{equation} 
\end{theorem}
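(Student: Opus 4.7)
The plan is to localize in space to the smooth region of the flows and then invoke the Escauriaza--Fer\'andez Carleman estimate \cite[Theorem 4]{EF} in the manner of \cite[Lemma 2.1]{Nguyen} and \cite[Lemma 3.6]{Wang_unique}.

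First I would exploit the AC singularity structure to obtain uniform smooth control on the region $\{|x|\ge \varepsilon/2\}$. By Remark \ref{con_neigh} applied to both $M_t^1$ and $M_t^2$, the flows converge in $C^\infty_{\mathrm{loc}}(\mathbb{R}^{n+1}\setminus\{0\})$ to the common smooth limit $M_0^1=M_0^2$ as $t\to 0^+$. Combined with the interior regularity away from the singular set, this yields uniform bounds on $|\nabla^k A|$ for any $k$, as well as a uniform lower bound on the normal injectivity radius, on the space-time region $\{|x|\ge \varepsilon/2\}\times[0,T]$. Hence, by Theorem \ref{graph_over_one_another} (after reducing $T$ if needed) the graph function $u$ is defined, bounded with all derivatives, on this entire region, and satisfies $u(\cdot,0)\equiv 0$ since $M_0^2=M_0^1$.

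Next I would produce a finite covering of $M_t\cap\{|x|\ge\varepsilon\}$ by Fermi-type coordinate charts $\Phi_\alpha\colon B_1\times [0,T]\to \mathcal{U}_\alpha$ that remain in $\{|x|\ge \varepsilon/2\}$, and in which $u\circ\Phi_\alpha$ solves a quasilinear backwards parabolic equation of the shape given by Lemma \ref{graph_eq_evolve} with smooth, uniformly bounded coefficients and a uniform ellipticity constant (the singularity at the origin plays no role here). Because $|u|+|\nabla u|$ is already small on $\{|x|\ge\varepsilon/2\}$ by Theorem \ref{graph_over_one_another}, the quasilinear error terms can be absorbed into the coefficients of a linear backwards parabolic operator $\mathcal{L}$ with $L^\infty$ bounds only. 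I would then apply \cite[Theorem 4]{EF} to $u\circ\Phi_\alpha$ on $B_1\times[0,T]$: since the solution is bounded and vanishes at $t=0$, the Escauriaza--Fer\'andez inequality yields, on $B_{1/2}$,
\begin{equation*}
\|u(\cdot,t)\|_{L^2(B_{1/2})}\le C\exp(-\delta/t),\qquad t\in(0,T/2],
\end{equation*}
with $\delta,C$ depending only on the ellipticity and coefficient bounds in the chart.

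Finally I would upgrade this $L^2$ decay to the claimed $C^1$ pointwise decay by standard parabolic interior estimates. Applied to $u$ (resp.\ $\nabla u$) on a slightly shrunken parabolic cylinder, Schauder/$L^p$ theory trades $L^2$ control for $C^1$ control at the cost of a multiplicative constant depending on coefficient bounds, so after slightly decreasing $\delta$ and absorbing constants I obtain $|u(x,t)|+|\nabla u(x,t)|\le e^{-\delta/t}$ in each chart, uniformly. A finite-cover argument concludes the proof on all of $M_t\cap\{|x|\ge\varepsilon\}$. The main (minor) obstacle is making sure the quasilinear error produced by Lemma \ref{graph_eq_evolve} is compatible with the linear framework of \cite[Theorem 4]{EF}; this is handled by first reducing $T$ so that $|u|$ and $|\nabla u|$ are as small as required, whence the nonlinear terms are absorbed into the coefficients of $\mathcal L$ without disturbing parabolicity.
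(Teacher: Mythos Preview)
The proposal is correct and follows essentially the same approach as the paper: localize to the smooth region via coordinate charts, apply the Escauriaza--Fern\'andez Carleman estimate in the manner of \cite[Lemma 2.1]{Nguyen} and \cite[Lemma 3.6]{Wang_unique} to obtain $L^2$ exponential decay, then upgrade to pointwise $C^1$ bounds via standard interior estimates. The paper simply spells out the Carleman step in more detail (explicit spatial and temporal cutoffs, the weight $\sigma_a^{-\alpha}G_a$, absorption of the linear error for large $\alpha$, and the parameter choice $a\sim \rho^2/\alpha$ that converts the weighted inequality into the decay rate $e^{-\delta/t}$), whereas you defer that to the cited references---but the strategy is identical.
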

\begin{proof}
There exists a uniform $r=r(\varepsilon)>0$ such that for every point $p$ outside of $B(0,\varepsilon)$, there exist some orthogonal transformation $O$ such $O(M_t-p)$ can be written as a graph with small gradient over $B(0,r)=B^n(0,r)$. Moreover, the function $u$, treated as a function over $B(0,r)$ (which amounts to choosing the graphical gauge for $M_t$)  satisfies 
\begin{equation}
|\partial_{t}u+\partial_{i}(a_{ij}(x,t)\partial_j u)| \leq M(|u(x)|+|\nabla u(x)|), 
\end{equation} 
where the gradient in the right-hand side is a Euclidean gradient, and the $a_{ij}$ are uniformly elliptic, with uniform derivative bounds on the coefficients.
Now, let $\varphi$ be a spatial cutoff function which is $1$ on $B(0,r/2)$ and $0$ outside of $B(0,r)$. Given $\alpha<\infty$, let $\psi$ be a temporal cutoff function which is one for time $[0,1/\alpha]$ and $0$ for time $t\geq 2/\alpha$. Setting $w=u\psi\varphi$ we get that
\[
|\partial_{t}w+\partial_i(a_{ij}(x,t)\partial_j w)| \leq M(|w(x)|+|\nabla w(x)|)+C\alpha\chi_{E}(|u(x)|+|\nabla u(x)|),
\]
for some $C<\infty$ and where 
\[
E=B(0,r)\times [0,2/\alpha]-(B(0,r/2) \times [0,1/\alpha]). 
\]
The Carleman estimate of \cite{EF} states that there exists an $N$, independent of $\alpha$,  such that  the for every $a\in (0,1/\alpha)$.
\begin{align}\label{Car_at_reg_scale}
&\int_{B(0,r)\times [0,2/\alpha]} \sigma_a^{-\alpha}(\alpha^2 w^2+\alpha \sigma_{a}|\nabla w|^2)G_{a}dxdt\\  
\leq  &NM\int _{B(0,r)\times [0,2/\alpha]}\sigma_{a}^{1-\alpha}(w^2+|\nabla w|^2)G_adxdt+CN\alpha^2 \int _{E}\sigma_{a}^{1-\alpha}(u^2+|\nabla u|^2)G_adxdt\nonumber\\
+&a^{\alpha}N^{\alpha}\sup_{t\in [0,2/\alpha]}\int_{B(0,r)} w^2+|\nabla w|^2dx\nonumber,
\end{align}
where 
\begin{equation}\label{G_def_reg_scale}
G_a(x,t)=\frac{1}{(t+a)^{n/2}}\exp\left(-\frac{|x|^2}{4(t+a)}\right),\qquad \sigma_a(t)=\sigma(t+a),
\end{equation}
and $\sigma$ is some function satisfying 
\begin{equation}\label{sigma_growth}
N^{-1} \leq \frac{\sigma(t)}{t} \leq 1.
\end{equation}

\bigskip

Note that for every $s\in [0,2/\alpha]$,  $\sigma_{a}(s) \leq a+s \leq \frac{3}{\alpha}$. Therefore, when $\alpha\geq 2 N M$ we can absorb the first term of the right-hand side of \eqref{Car_at_reg_scale} into the left-hand side, to obtain 

\begin{align}\label{Carl_cut}
&\int_{B(0,r)\times [0,2/\alpha]} \sigma_a^{-\alpha}(\alpha^2 w^2+\alpha \sigma_{a}|\nabla w|^2)G_{a}dxdt\nonumber \\  
\leq &CN\alpha^2 \int _{E}\sigma_{a}^{1-\alpha}(u^2+|\nabla u|^2)G_adxdt+a^{\alpha}N^{\alpha}\sup_{t\in [0,2/\alpha]}\int_{B(0,r)} w^2+|\nabla w|^2dx.
\end{align}
Now, on points of in $E$ with $t\geq \frac{1}{\alpha}$ we have:
\begin{equation}\label{E_bd_1}
CN\alpha^2 \sigma(t+a)^{1-\alpha}G_a \leq CN\alpha^2 N^{\alpha-1}(t+a)^{1-\alpha-n/2} \leq 
C\alpha^{n/2+1+\alpha}N^{\alpha},
\end{equation}
where for the first inequality we have used \eqref{G_def_reg_scale} and \eqref{sigma_growth}, and for the last we have used  $t+a\geq  \alpha^{-1}$. On the portion of $E$ where $|x|\geq r/2$, as 
\begin{equation}
\max_{t\in [0,\infty)} \frac{e^{-b/t}}{t^m} =(be)^m m^m,
\end{equation}
we similarly
\begin{equation}\label{E_bd_2}
CN\alpha^2 \sigma(t+a)^{1-\alpha}G_a \leq CN\alpha^2 N^{\alpha-1}(t+a)^{1-\alpha-n/2}\exp(-r^2/16(a+t)) \leq 
N_2^{\alpha}\alpha^{n/2+1+\alpha},
\end{equation}
for some $N_2>N$ depending on $C,N,r$, whenever $\alpha$ is sufficiently large. Moreover $|w|+|\nabla w|=o(1)$, 
it follows from \eqref{Carl_cut}, \eqref{E_bd_1} and \eqref{E_bd_2} that  
\begin{equation}\label{Car_cut_2}
\int_{B(0,r)\times [0,2/\alpha]} \sigma_a^{-\alpha}(\alpha^2 w^2+\alpha \sigma_{a}|\nabla w|^2)G_{a}dxdt \leq N_2^\alpha \alpha^{n/2+1+\alpha}.
\end{equation}

\bigskip

Now, setting $\rho=1/N_2$ and choosing $a=\rho^2/(2\alpha)$, we get that in $B_{2\rho r}\times [0,\rho^2/(2\alpha)]$ we have the bound
\[
\sigma_a(t) \leq  \frac{\rho^2}{\alpha},
\]
implying
\begin{equation}
\min\{\sigma_a^{-\alpha}\alpha^2,\alpha \sigma_{a}^{-\alpha+1}\}\geq  \rho^{-2(\alpha-1)}\alpha^{2+\alpha},
\end{equation}
and so
\begin{equation}
\min\{\sigma_a^{-\alpha}\alpha^2,\alpha \sigma_{a}^{-\alpha+1}\}G_{a}\geq  \rho^{-2\alpha+2-n}\alpha^{2+\alpha+n/2}e^{-\alpha r^2}.
\end{equation}
Substituting this into \eqref{Car_cut_2} and keeping in mind that $\rho=N_2^{-1}$ we get
\begin{equation}
\int_{B_{r\rho} \times [0,\rho^2/(2\alpha)]}|u|^2+|\nabla u|^2 \leq CN_2^{-\alpha},
\end{equation}
for every  $\alpha$ sufficiently large. 
\bigskip

Substituting $t=\rho^2/(2\alpha)$, we get that  for every $t$ sufficiently small we have that there exists $\delta$ such that
\begin{equation}
\int_{B_{r\sqrt{t}} \times [0,t]}|u|^2+|\nabla u|^2 \leq e^{-\delta/t}.
\end{equation} 
Using standard interior estimates, this gives the pointwise bound as well.
\end{proof}
By scaling time, we also get a (deteriorating) version close to the singularity:
\begin{theorem}\label{exp_con_smooth_resc}
There exist $R<\infty$ and $\delta>{0}$ such that on $M_t-B(0,R\sqrt{t})$ we have 
\begin{equation}
|u(x,t)|+|\nabla u(x,t)| \leq \exp\left(-\frac{\delta |x|^2}{t}\right). 
\end{equation} 
\end{theorem}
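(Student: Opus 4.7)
The plan is to reduce Theorem \ref{exp_con_smooth_resc} to Theorem \ref{exp_con_smooth} via parabolic rescaling. Given $x_0 \in M_{t_0}$ with $|x_0| \geq R\sqrt{t_0}$, set $\lambda = 1/|x_0|$ and consider the parabolically rescaled BMCFs $M^{i,\lambda}_s := \lambda M^i_{\lambda^{-2} s}$ for $i=1,2$ (with $M^1_t = M_t$). Both inherit AC singularities at $(0,0)$ with the same shrinker model $\Sigma$ (the tangent flow is scale invariant), and $M^{2,\lambda}_s$ is a normal graph over $M^{1,\lambda}_s$ of
\begin{equation*}
u^\lambda(y,s) := \lambda\,u(y/\lambda, s/\lambda^2),
\end{equation*}
whose intrinsic gradient is scale-invariant, $|\nabla u^\lambda|(y,s) = |\nabla u|(y/\lambda, s/\lambda^2)$, by the usual chain rule on the rescaled metric.

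I would then apply Theorem \ref{exp_con_smooth} (at $\varepsilon = 1/2$, say) to $M^{1,\lambda}$ at the point $(y_0, s_0) := (\lambda x_0, \lambda^2 t_0)$, observing $|y_0|=1$ and $s_0 = t_0/|x_0|^2 \leq 1/R^2$. The output is $|u^\lambda(y_0,s_0)| + |\nabla u^\lambda(y_0,s_0)| \leq e^{-\delta/s_0}$. Translating back via the scale-invariance of $|\nabla u^\lambda|$ and the relation $|u(x_0,t_0)| = \lambda^{-1}|u^\lambda(y_0,s_0)|$ yields
\begin{equation*}
|\nabla u(x_0,t_0)| \leq e^{-\delta|x_0|^2/t_0}, \qquad |u(x_0,t_0)| \leq |x_0|\, e^{-\delta|x_0|^2/t_0}.
\end{equation*}
The polynomial factor $|x_0|$ in the second bound is absorbed into the exponential by mildly reducing $\delta$: since $M_{t_0}$ is compact $|x_0|$ is uniformly bounded, while $|x_0|^2/t_0 \geq R^2$, so any $\delta' \in (0,\delta)$ works once $R$ is chosen large.

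The main technical obstacle is ensuring that the constant $\delta$ produced by Theorem \ref{exp_con_smooth} can be chosen uniformly in $\lambda \geq 1$. The constants in the Escauriaza--Fern\'andez Carleman estimate depend on the ellipticity of the linearised operator and on $C^k$ control of the graphical representations on balls of fixed radius --- all governed by the $C^{10}$ geometry of $M^{1,\lambda}$ near unit-scale points at small rescaled times. To obtain these bounds uniformly, I would invoke the rough conicality clause of Theorem \ref{AC_sing_struc}(\ref{rough_con}), whose scale-invariant phrasing applies simultaneously to the whole family $\{\lambda M_{\lambda^{-2}t}\}_{\lambda\geq 1}$ inside $B(0,\lambda r)$. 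Choosing $s_* > 0$ with $c\sqrt{s_*} < 1/8$ and $R$ with $1/R^2 < s_*$, this yields $\lambda$-independent $C^{10}$ bounds on $M^{1,\lambda}$ in $B(0,2)\setminus B(0,1/8)$ for all $s \leq s_*$, together with a uniform lower bound on the normal injectivity radius. Feeding these uniform bounds into the proof of Theorem \ref{exp_con_smooth} produces a common $\delta > 0$ valid for every rescaled flow in the family, completing the argument.
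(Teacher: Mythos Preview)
Your proposal is correct and follows the same route the paper takes: the paper's entire proof is the single sentence ``By scaling time, we also get a (deteriorating) version close to the singularity,'' i.e.\ parabolic rescaling of Theorem~\ref{exp_con_smooth}. You have simply spelled out the details that the paper leaves implicit --- in particular the point that the Escauriaza--Fern\'andez constant $\delta$ must be uniform over the family of rescalings, and that this uniformity is supplied by the scale-invariant curvature bounds of Theorem~\ref{AC_sing_struc}(\ref{rough_con}) (together with the smooth bounds away from the origin when $|x_0|$ is bounded below). One small remark: the rough conicality clause only records $|\nabla^i A|$ for $i\le 4$, not $C^{10}$ control, but this is more than enough for the coefficient bounds needed in the Escauriaza--Fern\'andez estimate (and higher derivatives follow anyway from standard interior estimates once $|A|$ is controlled).
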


\subsection{A Carleman estimate for roughly conical flows in a ball}
We adapt the method of \cite{Wang_unique} to obtain a Carleman estimate which is valid at points where the flow is (roughly) conical. We first quote a general Carleman estimate from that paper.

\begin{lemma}[Lemma 3.3 in \cite{Wang_unique}]\label{gen_carl}
Let $(M_t)_{t\in [0,T]}$ be a smooth BMCF on $\mathbb{R}^{n+1}-\{0\}$. Let $R>0$ and let $\mathcal{G}$ be a smooth positive function in 
\[
Q_{R,T}=\{(x,t)\;|x\in M_t-B(0,R),\;t\in [0,T]\}.
\]
Assume $v,\nabla_{M_t}v\in C^{0}_c(Q_{R,T})$ with $v(\cdot,0)=0$ and $v$ smooth in the interior of $Q_{R,T}$. Setting 
\begin{equation}\label{F_def}
\mathcal{F}=\frac{\partial_t\mathcal{G}-\Delta_{M_t}\mathcal{G}}{\mathcal{G}},
\end{equation}
we have the equality
\begin{align}
\int _0^{T}&\int_{M_t} \left(2\mathrm{Hess}\log \mathcal{G}+2HA+H^2g\right)(\nabla_{M_t}v,\nabla_{M_t}v)\mathcal{G} d\mu_tdt\\
&+\int _0^{T}\int_{M_t}\frac{1}{2}\left(\frac{d\mathcal{F}}{dt}+\Delta_{M_t}\mathcal{F}+H^2\mathcal{F}\right)v^2 \mathcal{G} d\mu_tdt\\
&\leq \int _0^{T}\int_{M_t}\left(\frac{dv}{dt}+\Delta_{M_t}v\right)^2\mathcal{G} d\mu_tdt+\int_{M_T}\left(|v|^2\mathcal{F}+|\nabla_{M_{t}}v|^2\right) \mathcal{G} d\mu_tdt.
\end{align}
\end{lemma}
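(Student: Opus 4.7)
The plan is to expand $\int_0^T\!\int_{M_t}\mathcal{G}(Lv)^2\,d\mu_t\,dt$ and perform integration by parts in both space and time, carefully tracking the time-dependence of the metric $g(t)$ and volume form $d\mu_t$ along the BMCF via $\partial_t g_{ij}=2Hh_{ij}$ and $\partial_t d\mu_t = H^2 d\mu_t$. Spatial boundary contributions vanish since $v,\nabla v\in C^0_c(Q_{R,T})$, and the $t=0$ boundary vanishes since $v(\cdot,0)=0$, while the $t=T$ boundary survives and produces the last term on the right-hand side. The cleanest organizing principle is H\"ormander's positive commutator method: conjugate by $w:=\mathcal{G}^{1/2}v$, so that $\int\!\int\mathcal{G}(Lv)^2=\int\!\int(\tilde Lw)^2$ for
\[
\tilde L w = w_t + \Delta_{M_t} w - 2\nabla\psi\cdot\nabla w + \bigl(|\nabla\psi|^2-\Delta\psi-\psi_t\bigr)w,\qquad \psi:=\tfrac12\log\mathcal{G},
\]
and split $\tilde L=\tilde S+\tilde A$ into its symmetric and antisymmetric parts in $L^2(M_t\times[0,T],d\mu_t\,dt)$; the BMCF volume evolution yields $(\partial_t)^*=-\partial_t - H^2$, which redistributes $\pm H^2/2$ correction terms between $\tilde S$ and $\tilde A$.

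One then applies the identity $\|\tilde L w\|^2=\|\tilde S w\|^2+\|\tilde A w\|^2+\langle[\tilde S,\tilde A]w,w\rangle + \mathrm{Bdry}(T)$ and drops the first two nonnegative terms. The commutator piece $\langle[\tilde S,\tilde A]w,w\rangle$ is where the structure of the left-hand side emerges. First, $[\Delta,-2\nabla\psi\cdot\nabla]$ contributes, after one integration by parts via the Bochner identity, a quadratic form $2\Hess(\log\mathcal{G})(\nabla w,\nabla w)$. Second, $[\Delta,\partial_t]$ contributes $-2HA(\nabla w,\nabla w)$ via $\partial_t g^{ij}=-2Hh^{ij}$, which together with the $\pm H^2/2$ mass corrections in $\tilde S,\tilde A$ packages into $(2\Hess\log\mathcal{G} + 2HA + H^2 g)(\nabla w,\nabla w)$. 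Third, the zeroth-order commutators between the multiplication pieces and the first-order operators, after undoing the conjugation $w=\mathcal{G}^{1/2}v$, assemble into exactly $\tfrac12(\partial_t\mathcal{F}+\Delta\mathcal{F}+H^2\mathcal{F})v^2$, with $\mathcal{F}$ as in \eqref{F_def}.

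The main obstacle will be the extensive but essentially routine bookkeeping: many first- and zeroth-order correction terms arise from the conjugation, from the four commutators entering $[\tilde S,\tilde A]$, and from the BMCF metric evolution, and they must regroup precisely into the stated expressions. An equivalent hands-on alternative avoiding the commutator machinery is to expand $\int\mathcal{G}(v_t^2+2v_t\Delta v+(\Delta v)^2)$ directly: treat the cross term $2\int\mathcal{G} v_t\Delta v$ by spatial integration by parts, use the identity $2g^{ij}v_{ti}v_j=\partial_t(g^{ij}v_iv_j) + 2HA(\nabla v,\nabla v)$ together with $\partial_t d\mu_t=H^2 d\mu_t$ to convert the residual $\nabla v_t\cdot\nabla v$ into a full time derivative modulo an $HA$ correction, and handle the leftover $\int v_t\nabla\mathcal{G}\cdot\nabla v$ via another temporal integration by parts that generates the $\mathcal{F}$-dependent zeroth-order term. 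Both routes yield the same identity.
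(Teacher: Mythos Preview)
The paper does not actually prove this lemma: its ``proof'' consists entirely of the observation that Wang's original argument in \cite{Wang_unique} used only the mean curvature flow equation and not the self-shrinking hypothesis, so the computation carries over verbatim to a general BMCF. Your proposal, by contrast, is a genuine reconstruction of that computation.

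Your outline is correct and is essentially the standard derivation of such weighted $L^2$ Carleman identities. Both routes you describe are valid and well known: the H\"ormander conjugation/commutator method is the cleanest conceptual packaging, while the direct expansion of $\int\mathcal{G}(v_t+\Delta v)^2$ is exactly the hands-on version that appears elsewhere in this paper (compare the manipulations in the proofs of Lemma~\ref{first_lem} and Theorem~\ref{Core_Car}, which treat the special case where $\mathcal{G}$ is spatially constant). You have correctly identified all the moving parts: the BMCF evolution $\partial_t g=2HA$, $\partial_t d\mu_t=H^2 d\mu_t$, the resulting adjoint $(\partial_t)^*=-\partial_t-H^2$, the Bochner-type identity producing $\Hess\log\mathcal{G}$ from the commutator of $\Delta$ with the drift, and the boundary bookkeeping at $t=0$ and $t=T$.

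One caution: the signs and constants in front of the $HA$ and $H^2g$ pieces are delicate and depend on exactly how one distributes the $\pm H^2/2$ correction between $\tilde S$ and $\tilde A$ and on how one integrates the $\dot\Delta$ term by parts; when you carry out the computation in full you should verify that you land on $+2HA+H^2g$ as stated rather than a sign-flipped variant. In the roughly conical regime where this lemma is applied (Theorem~\ref{Car_rough_con|}), these curvature terms are lower order, so the application is robust, but the identity itself requires the bookkeeping to be done precisely.
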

\begin{proof}
While the setting in \cite{Wang_unique} is that of a self-shrinking flow, only the mean curvature flow equation is used. The proof of \cite[Lemma 3.3]{Wang_unique} carries over verbatim. 
\end{proof}

The main theorem of this section is the following Carleman estimate.

\begin{theorem}\label{Car_rough_con|}
There exists some $\delta_0\in (0,1)$ with the following significance: 

For every $C<\infty$ there exist some $\varepsilon>0$, $c<\infty$, $\alpha_0<\infty,R_0<\infty$ such that for every $R>R_0$, $r>R$ and $T\in (0,1]$ such that $c\sqrt{T} < R$, for every  $\alpha \geq \alpha_0$ and $\delta \in (\delta_0,1)$, setting  
\begin{align}\label{final}
            \mathcal{G}=\mathcal{G}_{\alpha,T}:=\mathrm{exp}\left(2\alpha(T-t)(|x|^{1+\delta}-R^{1+\delta})]+2|x|^2\right),
\end{align}
the following holds:

If $(M_t)_{t\in [0,T]}$ is a BMCF which is  $(C,c,\varepsilon)$ roughly conical in $B(0,r)$  then
\begin{align}
\int _0^{T}&\int_{M_t} \left(|\nabla_{M_t}v|^2+v^2\right) \mathcal{G} d\mu_tdt\\
&\leq \int _0^{T}\int_{M_t}\left(\frac{dv}{dt}+\Delta_{M_t}v\right)^2\mathcal{G} d\mu_tdt+\int_{M_T}|\nabla_{M_{t}}v|^2 \mathcal{G} d\mu_t,
\end{align} 
for every $v$ which vanishes on $t=0$ and outside of the annulus $B(0,r)-B(0,R)$.

\end{theorem}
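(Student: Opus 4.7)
The strategy is to apply the general Carleman identity of Lemma \ref{gen_carl} with the specific weight $\mathcal{G}$, and verify that on the roughly conical annulus $\{R\leq |x|\leq r\}$ where $v$ is supported, the coefficient $2\Hess\log\mathcal{G} + 2HA + H^2g$ of $|\nabla_{M_t}v|^2$ and the coefficient $\tfrac12(\partial_t\mathcal{F} + \Delta_{M_t}\mathcal{F} + H^2\mathcal{F})$ of $v^2$ on the left-hand side both admit strictly positive lower bounds, where $\mathcal{F}$ is given by \eqref{F_def}. Our weight essentially coincides with the one used by Wang \cite{Wang_unique}; the novelty is that we do not have the exact self-shrinker identity $\langle x,\nu\rangle = -2H$, but only the rough conicality bounds $|\langle x,\nu\rangle|\leq \varepsilon|x|$ and $|\nabla^i A|\leq C|x|^{-1-i}$. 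We therefore rerun Wang's computation while keeping careful track of the residual errors, and show that these can be absorbed by choosing $\delta$ close to $1$, $R$ large, $\varepsilon$ small, and $\alpha$ large.

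For the gradient coefficient, we compute the ambient Hessians
\[
\Hess_{\mathbb{R}^{n+1}}|x|^{1+\delta} = (1+\delta)|x|^{\delta-1}\mathrm{Id} + (1+\delta)(\delta-1)|x|^{\delta-3}\, x\otimes x, \qquad \Hess_{\mathbb{R}^{n+1}}|x|^2 = 2\,\mathrm{Id},
\]
noting that the first is positive definite with eigenvalues $\delta(1+\delta)|x|^{\delta-1}$ and $(1+\delta)|x|^{\delta-1}$. The induced Hessian on $M_t$ is given by $\Hess_{M_t}f = \Hess_{\mathbb{R}^{n+1}}f|_{TM_t} + \langle \nabla f, \nu\rangle A$; the second-fundamental-form correction is bounded in operator norm by $\varepsilon C(1+\delta)|x|^{\delta-1}$ for the $|x|^{1+\delta}$ piece, and by $\varepsilon C$ for the $|x|^2$ piece, both negligible for $\varepsilon$ small. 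Since $|2HA+H^2g|\leq (n^2+2n)C^2/R^2$ is dominated by the constant $4g$ contribution from $\Hess(2|x|^2)$ for $R$ large, we conclude
\[
2\Hess\log\mathcal{G} + 2HA + H^2g \;\geq\; 2g + 4\alpha(T-t)\delta(1+\delta)|x|^{\delta-1}g
\]
on the roughly conical region, yielding the desired gradient-coefficient lower bound after multiplication by $\mathcal{G}$ and integration.

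For the $v^2$ coefficient, the dominant part of $\mathcal{F} = \partial_t\log\mathcal{G} - \Delta_{M_t}\log\mathcal{G} - |\nabla_{M_t}\log\mathcal{G}|^2$ comes from the gradient-squared, producing a large negative quantity of order $\alpha^2(T-t)^2(1+\delta)^2|x|^{2\delta}$. Differentiating this in $t$ (the explicit time derivative dominates, since the motion $\partial_t x = -H\nu$ has small norm by rough conicality) produces the main positive contribution $\sim 8\alpha^2(T-t)(1+\delta)^2|x|^{2\delta}$ in $\tfrac12(\partial_t\mathcal{F}+\Delta_{M_t}\mathcal{F}+H^2\mathcal{F})$; the residual errors carry factors of $\varepsilon$ or $(1-\delta)$ and are subordinate when $\delta$ is close to $1$. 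The boundary term $\int_{M_T}|v|^2\mathcal{F}\mathcal{G}$ that appears in Lemma \ref{gen_carl} is then handled by writing $|v|^2\mathcal{F}\mathcal{G} = |v|^2(\partial_t\mathcal{G}-\Delta_{M_T}\mathcal{G})$ and integrating the Laplacian part by parts in space: the resulting $2v\nabla v\cdot\nabla\mathcal{G}$ is absorbed by Cauchy--Schwarz into the $\int_{M_T}|\nabla v|^2\mathcal{G}$ term on the right-hand side, while $|v|^2\partial_t\mathcal{G}|_{t=T}$ is absorbed into the $v^2$ lower bound established above (at the cost of slightly enlarging $\alpha$). The main obstacle throughout is the $v^2$-coefficient estimate: Wang's argument uses $\langle x,\nu\rangle = -2H$ to obtain exact algebraic cancellations in $\mathcal{F}$ and its evolution that we only recover up to errors of size $O((\varepsilon + (1-\delta))\alpha^2(T-t)|x|^{2\delta})$, and the restriction $\delta > \delta_0$ is precisely calibrated to keep these errors subordinate to the main positive contribution.
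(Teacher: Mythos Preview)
Your overall strategy --- apply Lemma \ref{gen_carl} and verify positive lower bounds for the gradient coefficient $2\Hess\log\mathcal{G}+2HA+H^2g$ and for the zeroth-order coefficient $\tfrac12(\partial_t\mathcal{F}+\Delta\mathcal{F}+H^2\mathcal{F})$ --- is exactly right, and your Hessian computation matches the paper's. However, your treatment of the boundary term at $t=T$ has a genuine gap.

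You propose to write $\int_{M_T}v^2\mathcal{F}\mathcal{G}=\int_{M_T}v^2(\partial_t\mathcal{G}-\Delta\mathcal{G})$, integrate the Laplacian piece by parts to obtain $\int 2v\,\nabla v\cdot\nabla\mathcal{G}$, and absorb this by Cauchy--Schwarz into $\int_{M_T}|\nabla v|^2\mathcal{G}$. But Cauchy--Schwarz produces an additional term $\int_{M_T}v^2|\nabla\log\mathcal{G}|^2\mathcal{G}$, and at $t=T$ one has $|\nabla\log\mathcal{G}|^2=16|x^\top|^2$, of order $|x|^2\geq R^2$. You want the remaining $\int_{M_T}v^2\partial_t\mathcal{G}$ to absorb this; but at $t=T$, $\partial_t\log\mathcal{G}=-2\alpha(|x|^{1+\delta}-R^{1+\delta})+O(\varepsilon)$, which vanishes to leading order as $|x|\downarrow R$. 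Near the inner edge the positive $16|x^\top|^2\approx 16R^2$ term therefore cannot be dominated, and the argument breaks. Nor can a boundary integral at $t=T$ be ``absorbed into the $v^2$ lower bound established above,'' since that is a space-time integral and no trace inequality is available.

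The paper's fix is simpler: verify directly that $\mathcal{F}(x,T)<0$ on the support of $v$. One computes
\[
\mathcal{F}(x,T)=-2\alpha(|x|^{1+\delta}-R^{1+\delta})+8H\langle x,\nu\rangle-4n-16|x^\top|^2,
\]
and the dominant negative contribution is the \emph{$\alpha$-independent} term $-16|x^\top|^2\leq -16(1-\varepsilon^2)R^2$, present for all $|x|\geq R$; the curvature terms are $O(\varepsilon)$ and the $\alpha$-term is also nonpositive. Thus $\mathcal{F}(x,T)<0$ and the $v^2$ boundary term in Lemma \ref{gen_carl} may simply be dropped. Relatedly, for the space-time zeroth-order coefficient you identify only the $\alpha^2(T-t)$ contribution to $\partial_t\mathcal{F}$, which vanishes at $t=T$; the paper also keeps the terms $16\alpha(1+\delta)|x|^{\delta-1}|x^\top|^2$ and $16|x^\top|^2$ in $\partial_t\mathcal{F}$, which do not vanish and are what secure the lower bound uniformly up to $t=T$.
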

\begin{proof}
By Theorem \ref{gen_carl}, it suffices to show that on $B(0,r)-B(0,R)$ we have

  \begin{align}
    &\mathcal{F}(x,T)<0 \label{Fless0} \\
    &\left(\frac{d}{dt}+\Delta_{M_t}+H^2 \right)\mathcal{F} \geq 2 \label{greater1} \\
    & 2\mathrm{Hess}\log \mathcal{G}+2HA+H^2g \geq g \label{collarlower} 
    \end{align}

We have
\begin{equation}
\frac{d \log \mathcal{G}}{dt}=-2\alpha(|x|^{1+\delta}-R^{1+\delta})+2\alpha(1+\delta)(T-t)|x|^{\delta-1}H\langle x ,\nu \rangle+4H\langle x,\nu \rangle,
\end{equation}
and
\begin{equation}
\left|\nabla_{M_t} \log \mathcal{G}\right|^2=\left(2\alpha(1+\delta)(T-t)|x|^{\delta-1}+4\right)^2|x^{\top}|^2.
\end{equation}
As
\[
\mathrm{div}_{M_t}x^{\top}=n-\langle x,\nu \rangle H,
\]
we have
\begin{equation}
\Delta_{M_t}|x|^{\beta}=\beta(\beta-2)|x|^{\beta-4}|x^{\top}|^2+\beta|x|^{\beta-2}(n-\langle x,\nu \rangle H).
\end{equation}
Thus
\begin{equation}
\Delta_{M_t}\log \mathcal{G}=2\alpha(T-t)\left[(\delta+1)(\delta-1)|x|^{\delta-3}|x^{\top}|^2+(1+\delta)|x|^{\delta-1}(n-\langle x,\nu \rangle H)\right]+4(n-\langle x,\nu \rangle H).
\end{equation}
So as 
\[
\mathcal{F}=\frac{d \log \mathcal{G}}{dt}-\Delta_{M_t}\log \mathcal{G}-\left|\nabla_{M_t} \log \mathcal{G}\right|^2,
\]
when $t=T$ we get the expression
\begin{equation}
\mathcal{F}(x,T)=-2\alpha(|x|^{1+\delta}-R^{1+\delta})+4H\langle x,\nu \rangle-4(n-\langle x,\nu \rangle)H)-16|x^{\top}|^2,
\end{equation}
which, in light of the rough conical assumption implies
\begin{equation}
\mathcal{F}(x,T) \leq -2n,
\end{equation}
provided $\varepsilon$ is sufficiently small. Thus, if $c$ is sufficiently large, we indeed get that
\[ 
\mathcal{F}(x,T)<0,
\]
establishing \eqref{Fless0}.

\bigskip

To achieve \eqref{greater1}, we collect the various terms in the definition of $\mathcal{F}$ to obtain
\begin{align}\label{F_form}
\mathcal{F}&=-2\alpha(|x|^{1+\delta}-R^{1+\delta})+2\alpha(1+\delta)(T-t)|x|^{\delta-1}H\langle x ,\nu \rangle+4H\langle x,\nu \rangle \nonumber \\
&-2\alpha(T-t)\left[(\delta+1)(\delta-1)|x|^{\delta-3}|x^{\top}|^2+(1+\delta)|x|^{\delta-1}(n-\langle x,\nu \rangle H)\right]-4(n-\langle x,\nu \rangle H)\\
&-\left(2\alpha(1+\delta)(T-t)|x|^{\delta-1}+4\right)^2|x^{\top}|^2 \nonumber
\end{align}

Observe that, in light of \eqref{F_form}, when computing the left-hand side  of \eqref{greater1}, there are terms involving $\alpha^{i}$ for $i=0,1,2$, multiplied by terms with orders of magnitudes which, in light of the rough conical assumption and standard curvature estimates, are all powers of $|x|$. We will therefore estimate the leading term (in terms of the power of $|x|$) for each of the three summands in the left-hand side of \eqref{greater1}.  With this in mind, we have
\begin{equation}
\mathcal{F} \geq -16(T-t)^2\alpha^2|x|^{2\delta}-20(T-t)\alpha|x|^{1+\delta}-20|x|^2,
\end{equation} 
so{,} by the rough conical assumption again, we can safely estimate
\begin{equation}
H^2\mathcal{F} \geq -D\left[(T-t)^2\alpha^2+D(T-t)\alpha+1\right]
\end{equation}
for some constant $D=D(R)$.

Similarly, as each derivative yields a factor of $1/|x|$ (by Definition \ref{rough_con_def}) we obtain that 
\[
\Delta \mathcal{F} \geq -D\left[(T-t)^2\alpha^2+D(T-t)\alpha+1\right]
\]
Finally, in light of the rough conical assumption  and the evolution equation for the mean curvature and $\langle x,\nu \rangle$ (See Lemma \ref{eqnofmotion}), we have that
\begin{align}
\frac{d\mathcal{F}}{dt}&=16\alpha |x|^{\delta-1}|x^{\top}|^2+8(1+\delta)^2(T-t)\alpha^2 |x|^{2\delta-2}|x^{\top}|^2+16|x^{\top}|^2+\textrm{lower order terms}\\
& \geq  8\alpha |x|^{1+\delta}+2(T-t)\alpha^2|x|^{2\delta}.\nonumber
\end{align} 
provided $\varepsilon$ is sufficiently small and $c$ and $R$ are sufficiently large. Combining the above, and taking $T$ sufficiently small, gives \eqref{greater1} readily. 

\bigskip

Finally, for \eqref{collarlower}, {n}ote that by the asymptotically conical assumption 
\[
|\nu |x|^2| \leq 2\left|\Big\langle \nu,\frac{x}{|x|}\Big\rangle\right| |x|\leq 2\varepsilon |x| ,\qquad |A|\leq \frac{C}{|x|}.
\]
Thus
\begin{equation}\label{conv_dist_squred}
\mathrm{Hess}_{M_t}|x|^2=2g +(\nu |x|^2)A \geq 1.5g,
\end{equation}
provided $\varepsilon(C)$ is sufficiently small. 

Observe,  for every $\beta$, we have
\[
\mathrm{Hess}_{M_t} f^{\beta}=\beta f^{\beta-1} \mathrm{Hess}_{M_t}f +\beta(\beta-1)f^{\beta-2}df\otimes df,
\]
and hence, taking $\beta=(1+\delta)/2$ and $f=|x|^2$ we get that 
\begin{equation}
\mathrm{Hess}_{M_t} |x|^{1+\delta}=\frac{1+\delta}{2} |x|^{\delta-1} \mathrm{Hess}_{M_t}|x|^2 +\frac{1+\delta}{2}\frac{\delta-1}{2}|x|^{\delta-3}d|x|^2\otimes d|x|^2.
\end{equation}
Note  that the non-Hessian term can be estimated by
\begin{equation}
\left|\frac{1+\delta}{2}\frac{\delta-1}{2}|x|^{\delta-3}d|x|^2\otimes d|x|^2\right| \leq 2|1-\delta||x|^{\delta-1},
\end{equation}
so{,} for $\varepsilon$ as above and $\delta$ sufficiently close to one we get 
\begin{equation}
\mathrm{Hess}_{M_t} |x|^{1+\delta} \geq |x|^{\delta-1}g \geq 0.
\end{equation}
Combining this equation, \eqref{conv_dist_squred} and the rough conical assumption, we obtain \eqref{collarlower}. This completes the proof of the theorem.

\end{proof}

\subsection{Rate of convergence below the rough conical scale}

The main theorem of this section is {that} we may extend the fast convergence of the previous section down to a  $B(0,C\sqrt{t})$ neighborhood of the origin. 

\begin{theorem}\label{con_cone}
There exists $R<\infty$ and $\beta\in (0,1/2)$ such that there exists $t_0$ such that for all $t<t_0$ and every point in $M_t-B(0,R\sqrt{t})$ we have
\begin{equation}\label{rate_outside_C_ball}
|u(x,t)|+|\nabla u(x,t)| \leq e^{-1/t^{\beta}} 
\end{equation} 
\end{theorem}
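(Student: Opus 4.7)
The plan is to deduce the pointwise exponential rate on $M_t \setminus B(0, R\sqrt{t})$ from an integrated $L^2$ estimate produced by Theorem \ref{Car_rough_con|} applied on a parabolically rescaled flow, and then upgrade $L^2$ to $C^1$ via standard interior parabolic regularity for the quasilinear height equation $Lu = G(x, u, \nabla u)$ of Lemma \ref{graph_eq_evolve}. The inputs besides Theorem \ref{Car_rough_con|} itself are the a priori graph bound $|u| = o(\sqrt t)$, $|\nabla u| = o(1)$ from Theorem \ref{graph_over_one_another}; the smooth exponential decay $|u| + |\nabla u| \leq \exp(-\delta |x|^2/t)$ away from $B(0,R\sqrt t)$ from Theorem \ref{exp_con_smooth_resc}; the scale invariant rough conicality of Theorem \ref{AC_sing_struc}(\ref{rough_con}); and the fact that $u(\cdot,0) = 0$ (since $M^1_0 = M^2_0$), which supplies the initial-time boundary condition required by the Carleman estimate.

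Fix $t_1, r_1 > 0$ for which $(M_t)_{t \in [0, t_1]}$ is $(C, c, \varepsilon)$ roughly conical in $B(0, r_1)$, with $\varepsilon$ small and $R_0 > c$ chosen compatible with Theorem \ref{Car_rough_con|}. For each target time $t_* \in (0, t_1/2)$, set $\lambda = 1/\sqrt{t_*}$ and consider the rescaled BMCF $\tilde M_s := \lambda M_{\lambda^{-2} s}$, which by Theorem \ref{AC_sing_struc}(\ref{rough_con}) is $(C, c, \varepsilon)$ roughly conical in the now-large ball $B(0, \lambda r_1)$ on $[0, 1]$. The rescaled height $\tilde u(y, s) := \lambda u(y/\lambda, \lambda^{-2} s)$ satisfies the scale-invariant analogue of the graph equation, vanishes at $s = 0$, and inherits the bound $|\tilde u| + |\nabla \tilde u| \leq \lambda \exp(-\delta |y|^2/s)$ for $|y| \geq R\sqrt s$. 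I apply Theorem \ref{Car_rough_con|} on $[0, 1]$ with inner radius $R_0$, outer radius $\tilde r = \lambda r_1/2$, to $\tilde v := \tilde\phi(y)\, \tilde\chi(s)\, \tilde u$, where $\tilde\phi$ is a spatial cutoff supported in $\{R_0 \leq |y| \leq \tilde r\}$ and $\tilde\chi$ equals $1$ on $[0, 1/2]$ and vanishes at $s = 1$ (killing the final-time boundary term).

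Writing $L \tilde v = \tilde \eta L \tilde u + 2 \nabla \tilde \eta \cdot \nabla \tilde u + \tilde u L \tilde \eta$ with $\tilde \eta = \tilde\phi\tilde\chi$, the first summand satisfies $|L\tilde u| \lesssim |\tilde u|/s + o(1)|\nabla \tilde u|/\sqrt s$ via Lemma \ref{graph_eq_evolve} and Theorem \ref{AC_sing_struc}(1), and is absorbed into the left-hand side for $\alpha$ large (using $\tilde v(\cdot,0)=0$ and the time localization by $\tilde\chi$ to handle the singular factor $1/s^2$). The commutator $2\nabla\tilde\eta\cdot\nabla\tilde u+\tilde u L\tilde\eta$ concentrates in three transition regions. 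In the inner annulus $|y|\sim R_0$, the graph bound gives $|\tilde u|+|\nabla\tilde u|=o(1)$ and $\mathcal{G}=e^{O(R_0^2)}$, so this contribution is $O(1)$ independent of $\alpha$. In the outer annulus $|y|\sim\tilde r$, the rescaled smooth decay yields $|\tilde u|+|\nabla\tilde u|\lesssim \lambda e^{-\delta|y|^2}$, and since $\mathcal{G}\leq \exp(C\alpha \tilde r^{1+\delta}+C\tilde r^2)$, the contribution is of size $\lambda^2 \exp(-\delta \lambda^2 r_1^2/4+C\alpha(\lambda r_1)^{1+\delta})$. The temporal slab near $s=1$ contributes $\int_{s\sim 1}\tilde u^2\mathcal{G}$, controlled by the smoothness of $\tilde M_s$ away from $y=0$. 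Meanwhile the left-hand side dominates $e^{c\alpha}\int_{\{|y|\sim 2R_0,\,s\in[1/4,1/2]\}}\tilde u^2$ for some $c=c(R_0,\delta)>0$.

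Choosing $\alpha=c_0\lambda^{1-\delta}$ with $c_0$ so small that the outer contribution is dominated by $e^{-\delta\lambda^2r_1^2/8}$, we conclude $\int_{|y|\sim 2R_0,\,s\in[1/4,1/2]}\tilde u^2\lesssim e^{-c'\lambda^{1-\delta}}$. Unrescaling, and iterating over dyadic annuli $|y|\sim 2^kR_0$ with $R_0\leq 2^kR_0\leq\tilde r/2$, gives
\[
\int_{t_*/4}^{t_*/2}\!\!\int_{M_s\cap(B(0,\rho)\setminus B(0,R\sqrt{t_*}))} u^2\,\lesssim\, \exp\!\bigl(-t_*^{-(1-\delta)/2}\bigr)
\]
for a small fixed $\rho$, and standard interior parabolic regularity for $Lu=G$ then upgrades this to the claimed pointwise $C^1$ bound with any $\beta<(1-\delta)/2$. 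The hard part will be the bookkeeping of the error terms. Theorem \ref{Car_rough_con|} carries only a unit-weight $\tilde v^2$ on its left-hand side, so absorbing the reaction $\tilde u^2/s^2$ from $L\tilde u$ requires using $\tilde v(\cdot,0)=0$ and a Hardy-type step near $s=0$; and the $\alpha$-dependence must be tuned so that the inner contribution, which is $O(1)$ but carries no exponential smallness, stays negligible against $e^{c\alpha}$ on the LHS while the outer contribution remains small. This balancing is precisely what produces the conical-scale rate $(1-\delta)/2$ rather than the full $\beta=1$ available in the smooth regime.
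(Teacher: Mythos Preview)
Your strategy---rescale by $\lambda=t_*^{-1/2}$, apply Theorem~\ref{Car_rough_con|} to a spatially cut-off height, balance the inner, outer and final-time errors by choosing $\alpha\sim\lambda^{1-\delta}$, and read off $\beta<(1-\delta)/2$---is exactly the paper's. Three details of your execution are off, however, and as written they leave gaps.

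(i) On the inner cutoff annulus the weight is \emph{not} $e^{O(R_0^2)}$: since $|y|^{1+\delta}-R^{1+\delta}>0$ there, $\mathcal{G}$ carries a factor $e^{c_1\alpha}$ with $c_1>0$ depending on the transition width. The paper handles this by extracting the left side much farther out, on $\{|y|\geq 17R\}\times[0,T/2]$ (see \eqref{G_estimate_T/2}--\eqref{G_estimate_anu}), where the weight exponent is more than twice the inner one; only then does the inner contribution become $e^{-c\alpha}$ after dividing. Your extraction at $|y|\sim 2R_0$ only works if the inner cutoff has been carefully narrowed, which you did not specify. (ii) There is no singular reaction term: on the roughly conical annulus the curvature satisfies $|A|\leq C/|y|\leq C/R_0$ \emph{uniformly in $s$}, so Lemma~\ref{graph_eq_evolve} gives $|L\tilde u|\leq\varepsilon(|\tilde u|+|\nabla\tilde u|)$ for $R_0$ large (this is \eqref{v_err_bd}), and absorption is immediate---no Hardy step is needed. (iii) Most importantly, fixing $T=1$ and cutting off near $s=1$ produces a slab contribution which, at $|y|\sim\tilde r$, compares $e^{2|y|^2}$ from $\mathcal{G}$ against $e^{-2\delta_1|y|^2}$ from Theorem~\ref{exp_con_smooth_resc}; this is only integrable if $\delta_1>1$, which you do not know. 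The paper instead applies the Carleman on $[0,T]$ with $T<\delta_1/16$ and controls the final-time boundary term $\int_{N_T}|\nabla w|^2\mathcal{G}$ directly via \eqref{all_estimate_T}; the near-$s=0$ cutoff is removed by dominated convergence using $|v|\leq Ds$. Finally, the dyadic iteration is unnecessary: a single Carleman application already yields the integral bound on the full annulus, as in \eqref{exp_both_sides_calc}.
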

\begin{proof}
By item (3) of Theorem \ref{AC_sing_struc} (Structure of flow with AC singularity), there exist some $S>0$ and $C<\infty$ such that for every $\varepsilon>0$ there exist $r>0$ and $R_1>0$ such that that for every $\lambda\geq 1$ the flow $\lambda M^i_{\lambda^{-2} t}$ for $t\in [0,\lambda^2 S]$ are $(C,R_1,\varepsilon)$ roughly conical in $B(0,2\lambda r)$. Let $\varepsilon>0$ $c<\infty$, $\alpha_0<\infty$ and $R_0<\infty$ be the constants, depending on $C$, which are given by Theorem \ref{Car_rough_con|}. With this choice of $\varepsilon=\varepsilon(C)$, and the corresponding $r_1,R_1$ as above, we get that given any time $t_0\in [0,S]$ setting 
\[
N^i_t:=\frac{1}{\sqrt{t_0}}M^i_{\frac{t}{t_0}},
\]
the flows $(N^i_t)_{t\in (0,1]}$ are $(C,R_0,\varepsilon)$ roughly conical in $B(0,2r/\sqrt{t_0})$. Taking 
\[
R=\max\{R_0,R_1,c\}, 
\] 
the $(N^i_t)_{t\in (0,1]}$ are clearly $(C,R,\varepsilon)$ roughly conical in $B(0,2r/\sqrt{t_0})$, provided $t_0$ is sufficiently small. Note that this choice of an inner radius $R$, as well as the choice of an outer radius $2r/\sqrt{t_0}$ is consistent with the condition imposed by Theorem \ref{Car_rough_con|}, the $r$ therein being $2r/\sqrt{t_0}$, provided $t_0$ is sufficiently small. Note further that on $B(0,2r/\sqrt{t_0})-B(0,R)$ the graph function $v$ of $N^2_t$ over $N_t:=N^1_t$ satisfies 
\begin{equation}\label{v_err_bd}
|\partial_t v+\Delta^{N^1_t} v| \leq \varepsilon (|v|+|\nabla v|),
\end{equation}
again, provided that $t_0$ is sufficiently small. 

\bigskip

Take a spatial cutoff function $\varphi:\mathbb{R}_{+}\rightarrow [0,1]$ which is $0$ for $|x|\leq R$ and for $|x|\geq 2r/\sqrt{t_0}$ and $1$ for $|x|\in [(2R, r/\sqrt{t_0}]$ with derivatives estimated by 
\begin{equation}\label{cut_spa_bound}
R|D\varphi|+R^2|D\varphi|^2 \leq C_1.
\end{equation}
Let us further choose a temporal cutoff function $\psi$ which is $1$ for $t\geq 2\varepsilon$ and $0$ for $t\leq \varepsilon$ with 
\begin{equation}\label{cut_temp_bound}
|\psi'| \leq \frac{2}{\varepsilon}.
\end{equation}
Considering the function 
\begin{equation}
w=v\psi{\varphi},
\end{equation}
and using \eqref{v_err_bd}, \eqref{cut_spa_bound} and \eqref{cut_temp_bound}, there exists some $C_2=C_2(R)<\infty$, such that 
\begin{equation}\label{w_err_est}
|\partial_t w+\Delta w|^2 \leq \frac{1}{2}(|w|^2+|\nabla w|^2)+C_2(|v|^2+|\nabla v|^2)\chi_{E_1}+\frac{C_2}{\varepsilon^2}\chi_{E_2}
\end{equation}
where 
\begin{equation}\label{E1_def}
E_1 = \left(\left(B(0,2r/\sqrt{t_0})-B(0,r/\sqrt{t_0})\right) \cup \left(B(0,2R)-B(0,R)\right)\right)\times [0,2\varepsilon],
\end{equation}
\begin{equation}\label{E2_def}
E_2= \left(B(0,2r/\sqrt{t_0})-B(0,R)\right)\times [\varepsilon,2\varepsilon].
\end{equation}

\bigskip

Since  our choice of constants at the beginning of the proof satisfy the assumption of Theorem \ref{Car_rough_con|}, and since $w$ vanishes at $t=0$ and outside of $B(0,2r/\sqrt{t_0})-B(0,R)$, Theorem \ref{Car_rough_con|} is applicable for $w$. Thus, letting $\delta_0$ be as in that theorem and choosing any $T\in [0,1]$, $\alpha\geq \alpha_0$, and  $\delta\in (\delta_0,1)$ we have that 
\begin{align}\label{car_for_w}
\int _0^{T}&\int_{N_t} \left(|\nabla_{M_t}w|^2+w^2\right) \mathcal{G} d\mu_tdt\\
&\leq \int _0^{T}\int_{N_t}\left(\frac{dw}{dt}+\Delta_{N_t}w\right)^2\mathcal{G} d\mu_tdt+\int_{N_T}|\nabla_{N_{t}}w|^2 \mathcal{G} d\mu_tdt,
\end{align}
where $\mathcal{G}$ is given in \eqref{final}. 

Substituting \eqref{w_err_est} into the first term in the right-hand side of \eqref{car_for_w}, and absorbing the first resulting term into the left-hand side of \eqref{car_for_w}, we obtain\footnote{recalling the domain definitions \eqref{E1_def} and \eqref{E2_def}}

\begin{align}\label{car_for_w2}
\int_{0}^{T}\int_{N_t}|w|^2 \mathcal{G}\leq &C_3\int_{0}^{T}\int_{N_t\cap  \left(B(0,2R)-B(0,R)\right)}(v^2+|\nabla v|^2)\mathcal{G}  \nonumber \\ 
+&C_3\int_{0}^{T}\int_{N_t\cap  \left(B(0,2r/\sqrt{t_0})-B(0,r/\sqrt{t_0})\right)}(v^2+|\nabla v|^2)\mathcal{G} \\
+& C_3\int_{N_{T}}|\nabla w|^2 \mathcal{G}\nonumber \\
+& \frac{C_3}{\varepsilon^2}\int_{\varepsilon}^{2\varepsilon} \int_{N_t\cap  \left(B(0,2r/\sqrt{t_0})-B(0,R)\right)}(v^2+|\nabla v|^2)\mathcal{G}   \nonumber,
\end{align}
for some $C_3=C_3(C_2)$.

\bigskip

Since the flows $N_t$ and $N^2_t$  converge to one another smoothly outside of $B(0,R)$, there exists some $D<\infty$ such that outside of $B(0,R)$, 
\[
|v(x,t)|+|\nabla v(x,t)| \leq Dt \qquad x\notin B(0,R).
\]
Thus{,} by the dominated convergence theorem, we can pass \eqref{car_for_w2} to the limit as $\varepsilon\rightarrow 0$. Re-defining $w$ to be 
\begin{equation}
w=v\phi,
\end{equation}
we thus obtain 
\begin{align}\label{Lu_Carl_app}
\int_{0}^{T}\int_{N_t}|w|^2 \mathcal{G} \leq &C_3\int_{0}^{T}\int_{N_t\cap  \left(B(0,2R)-B(0,R)\right)}(v^2+|\nabla v|^2)\mathcal{G} \nonumber  \\
+&C_3\int_{0}^{T}\int_{N_t\cap  \left(B(0,2r/\sqrt{t_0})-B(0,r/\sqrt{t_0})\right)}(v^2+|\nabla v|^2)\mathcal{G}   \\
+& C_3\int_{N_{T}}|\nabla w|^2 \mathcal{G}. \nonumber
\end{align}
We will now lower bound the left-hand side and upper bound the terms in the right-hand side of \eqref{Lu_Carl_app}. As a first step, we will derive bounds for $\mathcal{G}$ in the domains of integration appearing in \eqref{Lu_Carl_app}. For the reader's convenience, we recall here that
\begin{align}\label{final_recall}
            \mathcal{G}=\mathcal{G}_{\alpha,T}:=\mathrm{exp}\left(\alpha(T-t)(|x|^{1+\delta}-R^{1+\delta})]+2|x|^2\right).
\end{align}
First, observe that for $(x,t)\in \left(\mathbb{R}^{n+1}-B(0,17R)\right) \times (0,T/2]$ we have  
\begin{equation}\label{G_estimate_T/2}
\mathcal{G}\geq  \exp\left(\frac{\alpha T}{2}(17^{1+\delta}-1)R^{1+\delta}\right),
\end{equation}
while for $(x,t)\in \left(B(0,2R)-B(0,R)\right)\times (0,T]$ we similarly have
\begin{equation}\label{G_estimate_anu}
\mathcal{G}\leq  \exp\left(\alpha T(2^{1+\delta}-1)R^{1+\delta}+8R^2\right),
\end{equation}
and for $(x,t)\in B(0,2r/\sqrt{t_0})-B(0,r/\sqrt{t_0})\times (0,T]$ we have 
\begin{equation}\label{G_estimate_away}
\mathcal{G}\leq  \exp\left(\alpha (2r/\sqrt{t_0})^{1+\delta}+8r^2/t_0\right).
\end{equation}
For {the }last weight estimate, note that  on $N_T$, along the support of $w$ we have
\begin{equation}\label{G_estimate_T}
\mathcal{G} = \exp\left(2|x|^2 \right).
\end{equation}

\bigskip

By Theorem \ref{exp_con_smooth_resc}, there exist $\eta>0$ such  for any $t\in (0,T]$ and $x\in N_t$ we have 
\begin{equation}\label{vw_estimate_smooth}
|v|^2+|\nabla v|^2+|w|^2+|\nabla w|^2 \leq \exp\left(-\frac{\eta|x|^2}{t}\right) 
\end{equation}

Thus, if we choose $T$ with $T<\eta/16$, combining \eqref{vw_estimate_smooth} and \eqref{G_estimate_T}  we get 
\begin{equation}\label{all_estimate_T}
\int_{N_{T}}|\nabla w|^2 \mathcal{G} \leq \int_{N_T} \exp(2|x|^2-16|x|^2) \leq D. 
\end{equation}

Similarly, combining \eqref{vw_estimate_smooth} with \eqref{G_estimate_away} we have 
\begin{equation}\label{all_estimate_away}
\int_{0}^{T}\int_{N_t\cap  \left(B(0,2r/\sqrt{t_0})-B(0,r/\sqrt{t_0})\right)}(v^2+|\nabla v|^2)\mathcal{G} \leq \exp\left(\alpha (2r/\sqrt{t_0})^{1+\delta}-8r^2/t_0\right).
\end{equation}

\bigskip

Substituting \eqref{all_estimate_T} and \eqref{all_estimate_away} into the last to terms on of the left-hand side of \eqref{Lu_Carl_app}, plugging the $\mathcal{G}$ bounds  \eqref{G_estimate_T/2} and \eqref{G_estimate_anu} for the remaining two terms and using \eqref{vw_estimate_smooth} to estimate $v^2+|\nabla v|^2\leq D$ on $B(0,2R)-B(0,R)$ we thus get 
\begin{align}\label{exp_both_sides_calc}
&\exp\left(\frac{\alpha T}{2}(17^{1+\delta}-1)R^{1+\delta}\right) \int_{0}^{T/2}\int_{N_t \cap \left(B(0,r/\sqrt{t_0})-B(0,17R)\right)}|v|^2 \\
&\qquad\leq C_4 \left(\exp\left(\alpha T(2^{1+\delta}-1)R^{1+\delta}\right)+ \exp\left(\alpha (2r/\sqrt{t_0})^{1+\delta}-8r^2/t_0\right)\right).\nonumber
\end{align}
Note that the exponent on the left-hand side of \eqref{exp_both_sides_calc} is larger than then first exponent in the right-hand side by a factor of more than $2$, so we obtain 
\begin{align}
& \int_{0}^{T/2}\int_{N_t \cap \left(B(0,r/\sqrt{t_0})-B(0,17R)\right)}|v|^2 \\
&\qquad\leq C_4\exp\left(-\alpha TR^{1+\delta}\right) \left(1+ \exp\left(\alpha (2r/\sqrt{t_0})^{1+\delta}-r^2/t_0\right)\right).\nonumber
\end{align}
Finally, setting
\[
\alpha=\frac{1}{2}\left(\frac{2r}{\sqrt{t_0}}\right)^{1-\delta},
\]
we finally obtain that 
\begin{equation}
\int_{0}^{T/2}\int_{N_t \cap \left(B(0,r/\sqrt{t_0})-B(0,17R)\right)}|v|^2 \leq C_5\exp\left(-\theta/{t_0^{(1-\delta)/2}}\right),
\end{equation}
for some $\theta>0$ and some $C_5<\infty$. Converting this integral estimate into pointwise $C^1$ estimate follows again from standard parabolic estimates.
\end{proof}

\subsection{Rate of convergence at the core shrinker scale}\label{sec_core}
The purpose of this section is to complete the proof of Theorem \ref{thm_rates} which, in light of the previous sections, amounts to extending the convergence rate \eqref{rate_outside_C_ball}achieved outside of $B(0,R\sqrt{t})$, into that parabolic ball.

\begin{proof}[Proof of Theorem \ref{thm_rates} (Rate of convergence)]
By Theorem \ref{con_cone}, we know that there exist some $C<\infty,T>0$ and $\beta\in (0,1)$ such that 
\begin{equation}\label{u_forth_bound}
|\nabla^i u(x,t)| \leq \exp\left(-\frac{1}{t^{\beta}}\right), \qquad t\in (0,T),\; x\in M_t-B(0,C\sqrt{t}).
\end{equation}
Consider the rescaled BMCF
\begin{equation}
 N_{\tau}=\frac{M_t}{\sqrt{t}},\; N^2_{\tau}=\frac{M^2_t}{\sqrt{t}} \qquad t=e^{\tau},  
\end{equation}
where $\tau\in (-\infty,\log(T))$. Expressing $N^2_{\tau}$ as a graph of a function $w$ over  $N_{\tau}$, $w$ is related to $u$ by  
\[
w(x,\tau):=e^{-\tau/2}u(xe^{\tau/2},e^{\tau})
\]
and so \eqref{u_forth_bound} implies (after reducing $\beta$ and $T$)
\begin{equation}\label{w_ass}
|\nabla^i w(x,\tau)| \leq \exp\left(-\exp(-\tau \beta)\right), \qquad \tau \in (-\infty,\log(T)),\; x\in N_\tau-B(0,C).
\end{equation}

\bigskip

We regard \eqref{w_ass} as  Dirichlet \textit{and} Neumann boundary conditions, whose smallness should be translatable to smallness in $N_{\tau}-B(0,C)$ via methods from the theory of inverse problems. Such estimates are often phrased for Euclidean domain, and their proof relies on this Euclidean structure as well. As such,  we need an estimate can conclude interior smallness from Dirichlet  and Neumann smallness along only a  \textit{portion} of the boundary.  The precise theorem we employ is the following stability result which appears in \cite{Isakov}.

\begin{theorem}[Theorem 3.3.10 in \cite{Isakov}]\label{isak_thm}
Let $D\subseteq \mathbb{R}^n$ be a domain with smooth boundary, and let $\Gamma\subseteq \partial D$.  Suppose $w:\overline{D}\times [0,3]\rightarrow \mathbb{R}$  satisfies an equation of the  form 
\begin{equation}\label{back_par}
-w_{\tau}=a^{ij}(x,t)\partial^2_{x_ix_j}w+b_i(x,t)\partial_{x_i}w+c(x,t)w=0,
\end{equation} 
where the coefficients are uniformly bounded in $C^2$ and are uniformly elliptic.  Suppose further that $E\subseteq D$ is an open domain  such that $\overline{E} \subseteq D\cup \Gamma$.

Then there exist some $C<\infty$ and $\alpha\in (0,1)$, depending on $E,D$, and bounds on the coefficients of the equation \eqref{back_par} and its ellipticity, with the following significance: 

If $w$ is a solution of \eqref{back_par} on $\overline{D}\times [0,3]$, satisfying
\begin{equation}
w(x,t)=f,\; \partial_{\nu}w(x,t)=g,\qquad (x,t)\in \Gamma\times [0,3]
\end{equation}
then
\begin{equation}
\|w\|_{L^2(E\times [1,2])} \leq C\left(F+\|w\|_{L^2(D\times [0,3])}^{1-\alpha} F^{\alpha}\right),
\end{equation}
where
\[
F=\|f\|_{W^{1,2}(\Gamma\times [0,3])}+\|g\|_{L^2(\Gamma \times [0,3])}
\]
\end{theorem}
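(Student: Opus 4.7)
The plan is to derive this stability estimate from a suitable Carleman inequality for the backwards parabolic operator $P=\partial_\tau+a^{ij}\partial_{x_ix_j}^2+b^i\partial_{x_i}+c$, combined with a cutoff argument that localises between $\Gamma$ and $E$. Since the estimate is local in nature near $\Gamma$ (the interior estimate on compact sets strictly inside $D$ follows from a standard three-cylinder inequality applied away from the boundary), the heart of the matter is to propagate the Cauchy smallness on $\Gamma\times[0,3]$ into a one-sided neighborhood of $\Gamma$ in $D\times[1,2]$. By compactness of $\overline E$ and the assumption $\overline E\subseteq D\cup\Gamma$, a finite chain of such local estimates, together with a standard interior three-cylinder inequality, will upgrade the local bounds to the global estimate on $E\times[1,2]$.

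For the local step, after flattening $\Gamma$ in coordinates so that it becomes $\{x_n=0\}$ and the ambient operator remains uniformly parabolic with $C^2$ coefficients, choose a pseudoconvex weight of the form $\phi(x,\tau)=e^{-\lambda\psi(x,\tau)}$, where $\psi$ is a smooth function whose level sets foliate a neighborhood of a point of $\Gamma$ transversally to $\Gamma$, with $\psi$ attaining its minimum on $\Gamma$ and increasing into $D$, and where the quadratic and time dependence of $\psi$ are chosen so that the standard H\"ormander-type pseudoconvexity condition for the operator $P$ on $[0,3]$ is satisfied. The resulting Carleman estimate takes the form
\begin{equation*}
\int e^{2\sigma\phi}\bigl(\sigma^3 w^2+\sigma|\nabla w|^2\bigr)\,dx\,d\tau \;\le\; C\int e^{2\sigma\phi}|Pw|^2\,dx\,d\tau \;+\; \mathcal{B}_\Gamma(w;\sigma),
\end{equation*}
valid for all $\sigma\ge\sigma_0$ and all $w$ supported in a small parabolic neighborhood of the chosen boundary point, where $\mathcal{B}_\Gamma(w;\sigma)$ is controlled by $C\sigma^3 e^{2\sigma\max_\Gamma\phi}(\|w\|_{W^{1,2}(\Gamma\times[0,3])}^2+\|\partial_\nu w\|_{L^2(\Gamma\times[0,3])}^2)$ after integrating the standard boundary contributions.

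Applying this estimate to $\chi w$ for a cutoff $\chi$ equal to $1$ on a set of the form $\{\phi\ge c_1\}\cap(\text{small neighborhood})$ and vanishing outside $\{\phi\ge c_0\}\cap(\text{slightly larger neighborhood})$, with $c_0<c_1$, the identity $P(\chi w)=\chi Pw+[P,\chi]w=[P,\chi]w$ (using $Pw=0$) produces a first-order commutator supported where $\phi\le c_0$, hence estimated by $C e^{2\sigma c_0}\|w\|_{H^1(D\times[0,3])}^2$, which in turn can be bounded by $C e^{2\sigma c_0}\|w\|_{L^2(D\times[0,3])}^2$ after standard parabolic regularity (absorbed into a larger global constant). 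Dropping the lower-order terms on the left-hand side and using $\chi w\equiv w$ where $\phi\ge c_1$ yields
\begin{equation*}
e^{2\sigma c_1}\|w\|_{L^2(\{\phi\ge c_1\}\cap(\text{nbhd}))}^2\;\le\;C\,e^{2\sigma \max_\Gamma\phi}F^2\;+\;C\,e^{2\sigma c_0}\|w\|_{L^2(D\times[0,3])}^2,
\end{equation*}
with $\max_\Gamma\phi<c_0<c_1$ by construction. Minimising in $\sigma\ge\sigma_0$ gives an interpolation inequality of the form $\|w\|_{L^2(\{\phi\ge c_1\})}\le C(F+F^\alpha\|w\|_{L^2(D\times[0,3])}^{1-\alpha})$ for some $\alpha\in(0,1)$ depending on the gap $c_1-\max_\Gamma\phi$ relative to $c_1-c_0$.

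Finally, to reach the full set $E\times[1,2]$, cover $\overline E$ by finitely many of these local Cauchy neighborhoods together with balls compactly contained in $D$, and iterate: each local step either uses the Cauchy data directly (near $\Gamma$) or uses the already-obtained interior bound as input to a standard three-cylinder inequality for parabolic equations \emph{on a smaller time interval}, which forces the restriction from $[0,3]$ to $[1,2]$ so that the time endpoints play no role. Composing finitely many such H\"older interpolations preserves the form $F+F^\alpha\|w\|^{1-\alpha}$ (with a possibly smaller $\alpha$), yielding the claimed estimate. The main technical obstacle is the construction and verification of the pseudoconvex weight $\phi$ on $\Gamma\times[0,3]$ for a general backwards parabolic operator with variable coefficients; here the argument follows the standard Isakov/Imanuvilov scheme, exploiting the fact that the time direction enters only through $\partial_\tau$, which makes the characteristic condition easy to arrange by taking $\psi$ with sufficiently convex spatial dependence and a small, carefully chosen $\tau$-profile that vanishes at the endpoints $\tau=0,3$ only in the contributions to $\mathcal B_\Gamma$.
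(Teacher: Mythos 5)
First, note that the paper does not prove this statement at all: it is quoted verbatim as Theorem 3.3.10 of Isakov's book and used as a black box, so there is no in-paper argument to compare yours against. Your sketch does follow the standard route by which such lateral-Cauchy stability estimates are proved (a Carleman inequality with a pseudoconvex weight near $\Gamma$, a cutoff, optimization in the large parameter $\sigma$, and a finite chain of interior three-cylinder steps), so the overall architecture is the right one.

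However, the quantitative step that is supposed to produce the H\"older interpolation is broken as written. With your choice $\phi=e^{-\lambda\psi}$, where $\psi$ is minimal on $\Gamma$ and increases into $D$, the weight $\phi$ is \emph{maximal} on $\Gamma$ and decreases into $D$; hence necessarily $c_0<c_1\le\max_{\Gamma\cap\mathrm{supp}\,\chi}\phi$, not ``$\max_\Gamma\phi<c_0<c_1$'' as you assert. Moreover the commutator $[P,\chi]w$ is supported on the transition region $\{c_0\le\phi\le c_1\}$, where $\nabla\chi\neq0$ --- not ``where $\phi\le c_0$'' (there $\chi\equiv0$, so its derivatives vanish too). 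On the transition region the weight is only bounded by $e^{2\sigma c_1}$, which is exactly the weight you keep on the conclusion set $\{\phi\ge c_1\}$, so with only two levels no gain is possible. One needs a third level $c_2>c_1$: cut off between $c_0$ and $c_1$, conclude on $\{\phi\ge c_2\}$, and then the correct ordering $c_1<c_2<M:=\max_{\Gamma\cap\mathrm{supp}\,\chi}\phi$ gives $e^{2\sigma c_2}\|w\|^2_{\{\phi\ge c_2\}}\le Ce^{2\sigma M}F^2+Ce^{2\sigma c_1}\|w\|^2$, and balancing in $\sigma$ yields the interpolation with $\alpha=(c_2-c_1)/(M-c_1)$. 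With the ordering you state, both error terms decay relative to the left-hand side as $\sigma\to\infty$, which would ``prove'' $w\equiv0$ near $\Gamma$ for arbitrary Cauchy data --- an absurdity showing the bookkeeping is inconsistent. Separately, the local Carleman estimate with the boundary contribution $\mathcal B_\Gamma$ (including the pseudoconvexity verification and the boundary integrations by parts, which are the entire technical content of Isakov's theorem) is asserted rather than derived, so even after the level-set fix the proposal remains an outline of the known strategy rather than a proof.
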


To apply this theorem in the setting at hand, note that since $N_{\tau}\rightarrow \Sigma$ as $\tau\rightarrow -\infty$, for every $\tau$ sufficiently negative there exists a domain $\Omega_{\tau} \subseteq \Sigma$ such that $N_{\tau}\cap B(0,5C)$ is a normal graph over $\Omega_{\tau}$. Denoting the above normal parametrization by  $\psi_{\tau}:\Omega_{\tau}\rightarrow N_{\tau}\cap B(0,5C)$ we further have that 
\begin{equation}\label{co_ord_cover}
\overline{B(0,4C)}\cap \Sigma \subseteq \Omega_{\tau},\qquad \textrm{and}\; \qquad N_{\tau}\cap \overline{B(0,2C)} \subseteq \psi_{\tau}(B(0,3C)\cap \Sigma)  .
\end{equation}

We now construct a cover for $B(0,3C)$ of smooth disks,  and sub-disks, which will serve as the domains $D$ and $E$ in Theorem \ref{isak_thm}. 

To this end, pick a point $p_0\in \partial{B(0,3C)}\cap \Sigma$.  For every point $p\in \overline{B(0,3C)}\cap \Sigma$ there exists some $\varepsilon_p$ and smooth closed \textbf{disks} $\overline{E}_p \subseteq \overline{D_{p}} \subseteq \Sigma \cap B(0,4C)$ such that 
\begin{enumerate}
\item $B(p,\varepsilon_p) \subseteq \mathrm{Int}(E_{p})$.
\item $p_0\in E_p$.
\item The set $\Gamma_p:= \partial B(p_0,\varepsilon_p)\cap \partial D_{p}$
is a half sphere of radius $\varepsilon_i$ around $p_0$ and furthermore
\[
\Gamma_p=\partial B(p_0,\varepsilon_p)\cap \partial E_{p}=\partial D_p\cap \partial E_p. 
\]
\end{enumerate} 
Such $D_p$  can be explicitly constructed by considering a simple, smooth curve between $p_0$ and any point $p\in \overline{B(0,3C)}$ and then choosing an $\varepsilon_p>0$ such that $2\varepsilon_p$-neighborhood of the curve is a disk, and then smoothing out the $\varepsilon_p$ neighborhood of this curve. Pushing $D_p$ inward a bit, away from $\Gamma_p$, gives the desired $E_p$. 

By compactness, there exist some $p_1,\ldots p_k$ such that the $E_i:=E_{p_i}$ cover $\overline{B(0,3C)}\cap \Sigma$. Denote   $D_i=D_{p_i}$
\[
\Gamma_i:=\partial B(p_0,\varepsilon_{p_{i}})\cap \partial D_i.
\]

\bigskip

Now, for each $i=1,\ldots k$, considering $\overline{D_i}$ with the pullback metric $\psi_{\tau}^{\ast}g_{\mathrm{Euc}}$ and considering $w$ in this parametrization (so we identify $w$ with $w\circ \psi_{\tau}$) it follows that $w$ satisfies a quasilinear backwards parabolic equation on $\overline{D_i}$. In particular, as $|\nabla^j w|$ converges to zero for $j=0,\ldots 4$ as $\tau\rightarrow -\infty$,  $w$ satisfies a linear equation of the form \eqref{back_par}, where the coefficients are uniformly bounded in $C^2$ and are uniformly elliptic.

\bigskip

We are now in a position to apply Theorem \ref{isak_thm}  to (the representation of) the function $w$ over the domains $D_i$, and its time translates. Since $w\rightarrow 0$ as $\tau\rightarrow -\infty$ on each $D_i$ we get that there exist constants $C_i$ and $\alpha_i\in (0,1)$ such that 
\begin{equation}
\|w\|_{L^2(E_i\times [\tau,\tau+1])} \leq C_i\left(F_i+F_i^{\alpha_{i}}\right),
\end{equation}
where 
\begin{equation}
F_i=\|w\|_{W^{1,2}(\Gamma_i\times [\tau-1,\tau+2])}+\|\partial_{\nu}w\|_{L^2(\Gamma_i \times [\tau-1,\tau+2])}
\end{equation}
In light of \eqref{w_ass}, this implies that 
\begin{equation}
\|w\|_{L^2(E_i\times [\tau,\tau+1])} \leq C_i\exp(-\alpha_i\exp(-\beta \tau)),
\end{equation}
where $C_i$ and $\alpha_i$ were updated from the previous step. Since the $E_i$ cover $\overline{B(0,3C)\cap \Omega}$, using \eqref{co_ord_cover} we thus get that there exist some $D$ and $\alpha>0$ such that 
\begin{equation}
\|w\|_{L^2((N_{\ast}\cap B(0,2C))\times [\tau,\tau+1])} \leq D\exp(-\alpha\exp(-\beta \tau)),
\end{equation}
Using standard parabolic estimates, this implies that (after changing $D$ and $\alpha$)
\begin{equation}
\|\nabla^i w\|_{B(0,C)\cap N_\tau} \leq D\exp(-\alpha\exp(-\beta \tau)).
\end{equation}
Chasing this estimate back through the re-parameterisations and dilation to $u$ gives the desired result.
\end{proof}

\bibliography{epi}
\bibliographystyle{alpha}

\end{document}